

\documentclass{amsart}

\usepackage{amssymb}
\usepackage{amsmath}
\usepackage{verbatim}
\usepackage{url}
\usepackage{graphicx}

\usepackage{amsaddr}

\def\un{{\mathrm{1~\hspace{-1.4ex}l}}}

\newcommand{\BigO}{\mathcal{O}}
\newcommand{\Spec}{\operatorname{Spec}}
\newcommand{\jvRe}{\operatorname{Re}}
\newcommand{\jvIm}{\operatorname{Im}}
\newcommand{\jvexp}{\operatorname{exp}}

\newcommand{\opnm}{\operatorname}
\newcommand{\eps}{\varepsilon}
\newcommand{\kap}{\varkappa}
\newcommand{\Bff}{\mathbf}


\newtheorem{theorem}{Theorem}[section]
\newtheorem{proposition}[theorem]{Proposition}
\newtheorem{corollary}[theorem]{Corollary}
\newtheorem{lemma}[theorem]{Lemma}
\theoremstyle{definition}

\theoremstyle{remark}
\newtheorem{remark}[theorem]{Remark}
\newtheorem{example}[theorem]{Example}
\newtheorem*{acknowledgements}{Acknowledgements}

\numberwithin{equation}{section}

\title[Spectral projections and resolvents for quadratic operators]{Spectral projections and resolvent bounds for partially elliptic quadratic differential operators}

\author{Joe Viola}

\email{Joseph.Viola@univ-nantes.fr}

\address{Laboratoire de Math\'{e}matiques Jean Leray \\ 2 rue de la Houssini\`{e}re \\ Universit\'{e} de Nantes \\ BP 92208 F-44322 Nantes Cedex 3}

\begin{document}

\begin{abstract}
We study resolvents and spectral projections for quadratic differential operators under an assumption of partial ellipticity. We establish exponential-type resolvent bounds for these operators, including Kramers-Fokker-Planck operators with quadratic potentials. For the norms of spectral projections for these operators, we obtain complete asymptotic expansions in dimension one, and for arbitrary dimension, we obtain exponential upper bounds and the rate of exponential growth in a generic situation. We furthermore obtain a complete characterization of those operators with orthogonal spectral projections onto the ground state. 

\keywords{Non-selfadjoint operator; resolvent estimate; spectral projections; quadratic differential operator; FBI-Bargmann transform}

\end{abstract}

\maketitle

\setcounter{tocdepth}{1}
\tableofcontents

\section{Introduction}

\subsection{Overview}

An extensive body of recent work has focused on the size of resolvent norms, semigroups, and spectral projections for non-normal operators, where these objects are not controlled by the spectrum of the operator; see \cite{TrEmBook}.  Rapid resolvent growth for quadratic operators such as 
\[
	Pu = -\Delta u + V(x)u, \quad V(x) = i|x|^2
\]
along rays inside the range of the symbol has been shown \cite{Da1999}, \cite{Zw2001} and extended significantly \cite{DeSjZw2004}, \cite{PSDuke}.  Sharp upper bounds of exponential type were recently shown in \cite{HiSjVi2011}.  The spectral projections of these operators were explored in \cite{DaKu2004}, where precise rates of exponential growth were found.  We focus here on operators with purely quadratic symbols, which are useful as accurate approximations for many operators whose symbols have double characteristics.

A weaker hypothesis than ellipticity describes a broader class of operators which includes many operators important to kinetic theory \cite{HeSjSt2005}, \cite{HeNiBook}.  Hypotheses on the so-called singular space of the symbol, particularly when that space is trivial, have been used successfully to describe semigroups generated by such operators \cite{HiPS2009}, \cite{HiPS2010}, \cite{PS2011}, \cite{HiPS2012}, \cite{OtPaPS2012}.

The purpose of the present work is threefold: first, we extend the analysis of \cite{HiSjVi2011} to include these operators with trivial singular spaces, providing exponential-type upper bounds for resolvents.  Second, we describe the spectral projections of elliptic and partially elliptic operators in a concrete way.  Third, we exploit this description to obtain information related to spectral projections and their norms, including exponential upper bounds, the rate of exponential growth in a generic situation, a complete asymptotic expansion in dimension 1, and a characterization of those operators with orthogonal projection onto the ground state.

\subsection{Background on quadratic operators}\label{ssBackground}

The structure of quadratic forms 
\[
	q(x,\xi):\Bbb{R}^n_x\times\Bbb{R}^n_\xi \rightarrow \Bbb{C}
\]
and their associated differential operators is well-studied (see e.g.\ Chapter 21.5 of \cite{HoALPDO3}), and here we recall much of the standard terminology which will be used throughout this work.

With the formulation
\[
	q(x,\xi) = \sum_{|\alpha + \beta| = 2} q_{\alpha\beta}x^\alpha\xi^\beta, \quad q_{\alpha\beta} \in \Bbb{C},
\]
we can identify the semiclassical Weyl quantization of $q$, viewed as an unbounded operator on $L^2(\Bbb{R}^n)$, with the formula
\begin{equation}\label{eqwExplicit}
	q^w(x,hD_x) = \sum_{|\alpha+\beta| = 2} q_{\alpha\beta}\frac{x^\alpha(hD_x)^\beta + (hD_x)^\beta x^\alpha}{2}.
\end{equation}
The semiclassical parameter $h>0$ is generally considered to be small and positive.  Homogeneity of the symbol $q$ and the unitary (on $L^2(\Bbb{R}^n)$) change of variables
\begin{equation}\label{eScaleOuthOp}
	U_h u(x) = h^{n/4}u(h^{1/2}x)
\end{equation}
give the relation
\begin{equation}\label{eScaleOuthEqn}
	U_h q^w(x,hD_x) U_h^* = h q^w(x,D_x),
\end{equation}
demonstrating that the semiclassical quantization of quadratic forms is unitarily equivalent to a scaling of the classical ($h=1$) quantization.

We have the standard symplectic form
\[
	\sigma((x,\xi), (y,\eta)) = \xi\cdot y - \eta \cdot x.
\]
Associated with $q$ is the ``Hamilton map" or ``fundamental matrix"
\begin{equation}\label{eFDef}
	F = \frac{1}{2}H_q = \frac{1}{2}\left(\begin{array}{cc} \partial_\xi\partial_x q & \partial_\xi^2 q \\ -\partial_x^2 q & -\partial_x\partial_\xi q\end{array}\right),
\end{equation}
which is the unique linear operator on $\Bbb{C}^{2n}$, antisymmetric with respect to $\sigma$ in the sense that
\[
	\sigma((x,\xi), F(y,\eta)) = -\sigma(F(x,\xi), (y,\eta)), \quad \forall (x,\xi), (y,\eta) \in\Bbb{R}^{2n},
\]
for which
\[
	q(x,\xi) = \sigma((x,\xi),F(x,\xi)), \quad \forall (x,\xi) \in \Bbb{R}^{2n}.
\]
We will write $F = F(q)$ when the quadratic form is perhaps unclear.

We here consider $q$ which are partially elliptic both in that
\begin{equation}\label{eRealSemidef}
	\jvRe q(x,\xi) \geq 0, \quad \forall(x,\xi)\in \Bbb{R}^{2n}
\end{equation}
and in that the so-called singular space of $q$, defined in \cite{HiPS2009}, is trivial:
\begin{equation}\label{eTrivialS}
	S := \bigcap_{k=0}^\infty \ker \left((\jvRe F)(\jvIm F)^k\right) = \{0\}.
\end{equation}
We will say that $q$ is elliptic if there exists $C>0$ with
\begin{equation}\label{eEllipticDef}
	\jvRe q(x,\xi) \geq \frac{1}{C}|(x,\xi)|^2,\quad \forall(x,\xi)\in\Bbb{R}^{2n}.
\end{equation}
We note that any elliptic quadratic form has $\ker (\jvRe F) = \{0\}$, and so the conditions (\ref{eRealSemidef}), (\ref{eTrivialS}) generalize the elliptic case.  We also recall that, aside from some degenerate cases only occuring when $n=1$, the assumption $q^{-1}(\{0\}) = \{0\}$ suffices to establish that $zq$ is elliptic for some $z \in \Bbb{C}$ with $|z| = 1$ (see, e.g.,\ Lemma 3.1 of \cite{Sj1974}).

Under the assumption (\ref{eTrivialS}), define $k_0$ as the least nonnegative integer such that the intersection defining $S$ becomes trivial:
\begin{equation}\label{ek0Def}
	k_0 = \min\left\{K \in \Bbb{N}_0\::\:\bigcap_{k=0}^{K} \ker \left((\jvRe F)(\jvIm F)^k\right) = \{0\}\right\}.
\end{equation}
By the Cayley-Hamilton theorem, $k_0 \leq 2n-1$, and when $q$ satisfies (\ref{eRealSemidef}) and (\ref{eTrivialS}), $k_0 = 0$ if and only if $q$ is elliptic.

In the case where $q$ is partially elliptic and has trivial singular space as in (\ref{eRealSemidef}) and (\ref{eTrivialS}), we are assured that, counting with algebraic multiplicity,
\begin{equation}\label{eEigenvalUHPLHP}
	\# (\{\jvIm \lambda > 0\} \cap \opnm{Spec} F) = \# (\{\jvIm \lambda < 0\} \cap \opnm{Spec} F) = n.
\end{equation}
We write $V_\lambda = V_\lambda(q)$ for the generalized eigenspace of $F$ corresponding to the eigenvalue $\lambda$.  We then have the associated subspaces
\begin{equation}\label{eLambdaDef}
	\Lambda^{\pm} = \Lambda^{\pm}(q) = \bigoplus_{\pm \jvIm \lambda > 0} V_\lambda,
\end{equation}
which are Lagrangian, meaning that $\opnm{dim}\Lambda^\pm = n$ and $\sigma|_{\Lambda^\pm} \equiv 0$.  Furthermore, when $q$ is elliptic as in (\ref{eEllipticDef}), we have that $\Lambda^+$ is positive in the sense that
\begin{equation}\label{ePositiveLagrangianDef}
	-i\sigma(X,\overline{X}) > 0 \quad \forall X \in \Lambda^+ \backslash\{0\}.
\end{equation}
In the corresponding sense, $\Lambda^-$ is negative.  The extension of this fact to $q$ obeying (\ref{eRealSemidef}) and (\ref{eTrivialS}) is essentially known in previous works; see the end of Section 2 of \cite{HiPS2012} and references therein.  For completeness, we here include a proof in Proposition \ref{pLambdaPositive}.

For any quadratic $q$ obeying (\ref{eRealSemidef}) and (\ref{eTrivialS}), we may write the spectrum of $q^w(x,hD_x)$ as a lattice obtained from the eigenvalues of $F$ in the upper half plane, written $\lambda_1,\dots,\lambda_n$.  Define
\begin{equation}\label{emuDef}
	\mu_\alpha = \frac{h}{i}\sum_{j=1}^n (2\alpha_j+1)\lambda_j.
\end{equation}
Then we have the formula
\begin{equation}\label{eSpectrum}
	\opnm{Spec} q^w(x,hD_x) = \{\mu_\alpha \::\: \alpha \in \Bbb{N}_0^n\}.
\end{equation}
This was classically known in the elliptic case \cite{Sj1974}, \cite{BdM1974}.  In the partially elliptic case, this formula was proven in Theorem 1.2.2 of \cite{HiPS2009} under somewhat weaker hypotheses than (\ref{eTrivialS}).

We also study the spectral projections of these operators.  Following the notation of Theorem XV.2.1 of \cite{GoGoKrBook} (see also Chapter 6 of \cite{HiSiBook}), let us assume that $A$ is a closed densely defined operator on $\mathcal{H}$ a Hilbert space, and $\opnm{Spec}A = \Omega_1 \cup \Omega_2$, where $\Omega_1$ is contained in a bounded Cauchy domain $\Delta$ with $\overline{\Delta}\cap \Omega_2 = \emptyset$.  Let $\Gamma$ be the oriented boundary of $\Delta$.  Then we call
\begin{equation}\label{eSpectralProjection}
	P_{\Omega_1, A} = (2\pi i)^{-1} \int_\Gamma (\zeta-A)^{-1}\,d\zeta
\end{equation}
the spectral (or Riesz) projection for $A$ and $\Omega_1$.

Because the spectra we will study, given by (\ref{eSpectrum}), are discrete, we will generally use the definition in the case that $\Omega_1$ is finite.  We emphasize that facts about the spectral projections are independent of the semiclassical parameter after scaling, as (\ref{eScaleOuthEqn}) provides that the projection for the classical operator and $\Omega_1$ is unitarily equivalent to the projection for the semiclassical operator and $h\Omega_1$:
\begin{equation}\label{eRescalingProjections}
	U_h P_{h\Omega_1, q^w(x,hD_x)}U_h^* = P_{\Omega_1, q^w(x,D_x)}.
\end{equation}

We perform much of our analysis in weighted spaces of entire functions associated with FBI transforms (see for example \cite{MaBook}, \cite{Sj1996}, or Chapter 12 of \cite{SjLoR}).  For $\Phi:\Bbb{C}^n \rightarrow \Bbb{R}$, we define
\begin{equation}\label{eHPhiDef}
	H_\Phi(\Bbb{C}^n;h) = \opnm{Hol}(\Bbb{C}^n)\cap L^2(\Bbb{C}^n; e^{-2\Phi(x)/h}\,dL(x)).
\end{equation}
Here $dL(x) = d\jvRe x \,d\jvIm x$ is Lebesgue measure on $\Bbb{C}^n$, and we only need to consider the most elementary case where $\Phi$ is quadratic when regarded as a function of $(\jvRe x, \jvIm x) \in \Bbb{R}^{2n}$, real-valued, and strictly convex.  When functions do not need to be holomorphic, we refer to
\[
	L^2_\Phi(\Bbb{C}^n;h) = L^2(\Bbb{C}^n; e^{-2\Phi(x)/h}\,dL(x)),
\]
and we will often omit $(\Bbb{C}^n;h)$ where we hope it can be understood.

When working in weighted spaces $H_\Phi(\Bbb{C}^n;h)$ or $L^2_\Phi(\Bbb{C}^n;h)$, we assume unless otherwise stated that derivatives are holomorphic, meaning that
\[
	\partial_x = \frac{1}{2}(\partial_{\jvRe x} -i \partial_{\jvIm x}).
\]

We finish this section with some brief remarks on notation.  We use $(\cdot,\cdot)$ for a symmetric (bilinear) inner product, usually the dot product on $\Bbb{C}^n$, and $\langle \cdot, \cdot\rangle_{\mathcal{H}}$ for a Hermitian (sesquilinear) inner product on a Hilbert space $\mathcal{H}$.  We frequently refer to adjoints of operators on a Hilbert space.  When the space needs to be emphasized, we add it as a subscript, for example writing $A^*_{\mathcal{H}}$.  We use $\mathcal{L}(\mathcal{H})$ to denote the set of bounded linear operators mapping $\mathcal{H}$ to itself with the usual operator norm.  We frequently use a superscript $\dagger$ to indicate that an object is ``dual" in a loose sense, but the formal meaning may change from instance to instance.

Finally, when we say that a unitary operator $U$ quantizes a canonical transformation $\kappa$, we mean that
\[
	U p^w(x,hD_x)U^* = (p\circ \kappa^{-1})^w(x,hD_x)
\]
for appropriate symbols $p$ and an appropriate definition of the semiclassical Weyl quantization.  In this work, we only apply this notion to (complex) linear canonical transformations and to symbols which are homogeneous polynomials of degree no more than 2, in which case formulas like (\ref{eqwExplicit}) may be used.  We therefore use only the most rudimentary aspects of the theory of metaplectic operators; see for instance the Appendix to Chapter 7 of \cite{DiSjBook} or Chapter 3.4 of \cite{MaBook}.

\subsection{Statement of results}

We are now in a position to formulate the four main results of this work.  

First, we extend the central result of \cite{HiSjVi2011} to include partially elliptic operators, at the price of more rapid exponential growth.  In fact, the result here is identical to the main result in \cite{HiSjVi2011} save that exponential growth in $h^{-1}$ is replaced by exponential growth in $h^{-1-2k_0}$.  A remarkable recent estimate of Pravda-Starov \cite{PS2011} provides a subelliptic estimate sufficient to establish the following theorem, which gives exponential-type semiclassical resolvent bounds when the spectral parameter is bounded and avoids a rapidly shrinking neighborhood of the spectrum.

\begin{theorem}\label{tExtendToPE}

Let $q:\Bbb{R}^{2n} \rightarrow \Bbb{C}$ be a quadratic form which is partially elliptic with trivial singular space in the sense of (\ref{eRealSemidef}) and (\ref{eTrivialS}), and let $k_0$ be defined as in (\ref{ek0Def}).

If $F(q)$ is diagonalizable, then for any $C_1,C_2,L > 0$ there exist $h_0 > 0$ sufficiently small and $A > 0$ sufficiently large where, if $z \in \Bbb{C}$, $|z| \leq C_2$,
\[
	\opnm{dist}(z,\opnm{Spec}q^w(x,hD_x)) \geq \frac{1}{C_1}\jvexp(-Lh^{-1-2k_0}),
\]
and $h \in (0,h_0]$, we have the resolvent bound
\[
	||(q^w(x,hD_x)-z)^{-1}||_{\mathcal{L}(L^2(\Bbb{R}^n))} \leq A\jvexp\left(Ah^{-1-2k_0}\right).
\]

If $F(q)$ is not assumed to be diagonalizable, then for any $C_1,C_2,L > 0$ there exist $h_0 > 0$ sufficiently small and $A > 0$ sufficiently large where, if $z \in \Bbb{C}$, $|z| \leq C_2$,
\[
	\opnm{dist}(z,\Spec q^w(x,hD_x)) \geq \frac{h^L}{C_1},
\]
and $h \in (0, h_0]$, we have the resolvent bound
\[
	||(q^w(x,hD_x)-z)^{-1}||_{\mathcal{L}(L^2(\Bbb{R}^n))} \leq A\jvexp\left(Ah^{-1-2k_0}\log\frac{1}{h}\right).
\]

\end{theorem}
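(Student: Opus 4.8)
The plan is to reproduce the proof of the main theorem of \cite{HiSjVi2011}, replacing at each step the global elliptic estimate it relies on by the subelliptic estimate of Pravda-Starov \cite{PS2011}, and then to keep track of how this substitution degrades the powers of $h$. By \eqref{eScaleOuthEqn} and the corresponding dilation of the spectrum in \eqref{eSpectrum}, it is harmless to regard $h^{-1}$ as a large parameter and to keep the semiclassical notation throughout. The first step is the FBI--Bargmann reduction: using the positivity of $\Lambda^{\pm}(q)$ recorded in Proposition \ref{pLambdaPositive}, one takes an FBI transform quantizing a suitable complex linear canonical transformation $\kappa$, conjugating $q^w(x,hD_x)$ on $L^2(\Bbb{R}^n)$ to an operator $Q$ on a weighted space $H_\Phi(\Bbb{C}^n;h)$ whose ``ideal'' weight $\Phi_q$ turns $Q$ into the Bargmann normal form $\sum_{j=1}^n \frac{\lambda_j}{i}(2x_j h\partial_{x_j}+h)+N$, with $N$ the sum of the nilpotent couplings $x_j h\partial_{x_{j'}}$ prescribed by the Jordan structure of $F(q)$. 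The single new feature compared with \cite{HiSjVi2011}, where $q$ is elliptic, is that under \eqref{eRealSemidef}--\eqref{eTrivialS} the weight $\Phi_q$ is only plurisubharmonic and not strictly convex; hence one works in a genuine strictly convex weight $\Phi$, keeping in mind that $\Phi-\Phi_q$ is a nonnegative quadratic form, so $\Phi-\Phi_q=\BigO(|x|^2)$.

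The place where the subelliptic estimate enters is the treatment of the ``high-energy'' part of the operator. In \cite{HiSjVi2011} one isolates the part of $L^2(\Bbb{R}^n)$ microlocalized to $\{|(x,hD_x)|\gtrsim 1\}$ and observes, from ellipticity, that $q^w(x,hD_x)-z$ is invertible there with $\BigO(1)$ norm for $|z|\le C_2$. Here one instead works with $\{|(x,hD_x)|\gtrsim \rho h^{-k_0}\}$: the semiclassical form of the estimate of \cite{PS2011}, obtained from the non-semiclassical one via \eqref{eScaleOuthEqn}, reads $\|q^w(x,hD_x)u\|\gtrsim h^{2k_0/(2k_0+1)}\||(x,hD_x)|^{2/(2k_0+1)}u\|-\BigO(h)\|u\|$, so on $\{|(x,hD_x)|\ge \rho h^{-k_0}\}$ and for $|z|\le C_2$ one gets $\|(q^w(x,hD_x)-z)u\|\gtrsim (\rho^{2/(2k_0+1)}/C-\BigO(h)-C_2)\|u\|\ge\|u\|$ once $\rho$ is large and $h$ small. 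This is precisely where the threshold $h^{-k_0}$, and with it the exponent $1+2k_0$, is forced: the relevant phase-space scale, $\BigO(1)$ in the elliptic case, becomes $h^{-k_0}$.

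The remaining, ``low-energy'' part, microlocalized to $\{|(x,hD_x)|\lesssim h^{-k_0}\}$, is analysed on the Bargmann side exactly as in \cite{HiSjVi2011}, but with the degenerate ideal weight $\Phi_q$ replacing the strictly convex one available there. Since a monomial $x^\alpha$ carries its $H_{\Phi_q}$-mass near $|x|^2\sim h|\alpha|$, this region involves only $x^\alpha$ with $|\alpha|\lesssim h^{-1-2k_0}$, so in $H_{\Phi_q}$ the diagonal part of $Q$ has resolvent of norm at most $\opnm{dist}(z,\Spec q^w(x,hD_x))^{-1}$, while the nilpotent $N$, of norm $\BigO(h^{-2k_0})$ and nilpotency index $\BigO(h^{-1-2k_0})$ on this subspace, contributes at most $\opnm{dist}(z,\Spec q^w(x,hD_x))^{-1}$ to a power $\BigO(h^{-1-2k_0})$ through the finite Neumann expansion; when $F(q)$ is diagonalizable, $N$ is absent. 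Passing from the bound in $H_{\Phi_q}$ to one in $H_\Phi\cong L^2(\Bbb{R}^n)$ on this region, and then gluing with the high-energy parametrix, is the part that must be carried out following the weighted-space deformation method of \cite{HiSjVi2011}; it costs an additional factor $\jvexp((\Phi-\Phi_q)/h)=\jvexp(\BigO(|x|^2)/h)=\jvexp(\BigO(h^{-1-2k_0}))$. Assembling the pieces, in the diagonalizable case one obtains $\|(q^w(x,hD_x)-z)^{-1}\|_{\mathcal{L}(L^2(\Bbb{R}^n))}\le C\opnm{dist}(z,\Spec q^w(x,hD_x))^{-1}\jvexp(\BigO(h^{-1-2k_0}))\le A\jvexp(Ah^{-1-2k_0})$ under the stated lower bound on the distance, and in the general case the additional factor $\opnm{dist}(z,\Spec q^w(x,hD_x))^{-\BigO(h^{-1-2k_0})}\le (h^L)^{-\BigO(h^{-1-2k_0})}=\jvexp(\BigO(h^{-1-2k_0}\log(1/h)))$, as claimed.

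The main obstacle I anticipate is this last step: controlling quantitatively the passage from the degenerate ideal weight $\Phi_q$ to a strictly convex working weight $\Phi$ and the gluing of the low- and high-energy parametrices across the scale $|(x,hD_x)|\sim h^{-k_0}$, so that the only loss is the $\jvexp(\BigO(h^{-1-2k_0}))$ (respectively $\jvexp(\BigO(h^{-1-2k_0}\log(1/h)))$) predicted by the heuristic above. Everything else is a matter of faithfully re-running the estimates of \cite{HiSjVi2011}, with the subelliptic estimate of \cite{PS2011} in the role played there by ellipticity, and bookkeeping the powers of $h$ it introduces.
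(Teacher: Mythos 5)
Your overall architecture is the paper's: FBI--Bargmann reduction to $(Mx)\cdot\xi$, the Pravda-Starov subelliptic estimate in place of ellipticity on the high-energy part, the Jordan-form finite-dimensional analysis on the low-energy part, and gluing with exponential weight-comparison losses. Two points of comparison. First, a factual correction: the weight produced by the reduction to normal form is \emph{not} merely plurisubharmonic in the partially elliptic case --- Proposition \ref{pLambdaPositive} shows that $\Lambda^{+}(q)$ is still a positive Lagrangian plane, so $\jvIm A_+>0$ and the computation \eqref{e1MinusAbsCplus} yields a strictly convex $\Phi_2$, exactly as in the elliptic case. The loss of ellipticity appears instead in the failure of $\jvRe\tilde q$ to be bounded below on $\Lambda_{\Phi_2}$ away from the origin; consequently the ``main obstacle'' you anticipate (passing from a degenerate ideal weight to a convex working weight) does not arise, and the genuine technical work is elsewhere, namely making the microlocalization to $\{|(x,hD_x)|\gtrsim\rho h^{-k_0}\}$ rigorous. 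The paper does this by passing the subelliptic estimate through the functional calculus of a conjugated harmonic oscillator (Lemma \ref{lEllipticSpectralProj} and Proposition \ref{pPartiallyEllipticEstimate}), following \cite{OtPaPS2012}. Second, the bookkeeping of scales: you move the high/low threshold from $O(1)$ to $|(x,hD_x)|\sim h^{-k_0}$ (degree $\lesssim h^{-1-2k_0}$) while keeping $|z|\le C_2$; the paper instead keeps the threshold at degree $\le C_0h^{-1}$, restricts the spectral parameter to $|z|\le C_2h^{\delta}$ with $\delta=2k_0/(2k_0+1)$, and only at the very end rescales $h=\tilde h^{1+2k_0}$ via \eqref{eScaleHyp}, which converts the $\jvexp(\BigO(h^{-1}))$ and $\jvexp(\BigO(h^{-1}\log(1/h)))$ bounds of \cite{HiSjVi2011} into the stated ones. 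After that rescaling the two accountings coincide (degree $C_0h^{-1}=C_0\tilde h^{-1-2k_0}$), but the paper's version lets one reuse the finite-dimensional Proposition of \cite{HiSjVi2011} essentially verbatim, whereas your plan would require re-deriving the nilpotent Neumann-series bound and the $\jvexp(\BigO(h^{-1-2k_0}))$ projection and weight-comparison estimates directly on the larger low-energy space.
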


We also have in \cite{HiSjVi2011} a unitary equivalence between $L^2(\Bbb{R}^n)$ and a weighted space $H_{\Phi_2}(\Bbb{C}^n)$ of entire functions, defined in (\ref{eHPhiDef}), which reduces the symbol $q(x,\xi)$ to a normal form $\tilde{q}(x,\xi)$; we review this in Section \ref{ssNormalForm}.   In that weighted space, we have a simple characterization of the spectral projections for $\tilde{q}^w(x,hD_x)$ as truncations of the Taylor series.

\begin{theorem}\label{tCharacterizeProj}
Let $q:\Bbb{R}^{2n}\rightarrow\Bbb{C}$ be quadratic and partially elliptic with trivial singular space as in (\ref{eRealSemidef}) and (\ref{eTrivialS}).  Let $\mu_\alpha$ be as defined in (\ref{emuDef}).  As described in Proposition \ref{pNormalForm}, the operator $q^w(x,hD_x)$ acting on $L^2(\Bbb{R}^n)$ is unitarily equivalent to $\tilde{q}^w(x,hD_x)$ acting on $H_{\Phi_2}(\Bbb{C}^n;h)$ for some $\Phi_2:\Bbb{C}^n\rightarrow \Bbb{R}$ real-valued, quadratic, and strictly convex.  Using the notation (\ref{eSpectralProjection}), write
\[
	\Pi_z = P_{\{z\}, \tilde{q}^w(x,hD_x)}:H_{\Phi_2}(\Bbb{C}^n;h) \rightarrow H_{\Phi_2}(\Bbb{C}^n;h).
\]
Then
\begin{equation}\label{eCharacterizeProj}
	\Pi_z u(x) = \sum_{\alpha\::\:\mu_\alpha = z} (\alpha !)^{-1}(\partial^\alpha u(0))x^\alpha.
\end{equation}
\end{theorem}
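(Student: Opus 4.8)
The strategy is to show that $\Pi_z$ equals the operator $\Pi_z^0$ appearing on the right-hand side of (\ref{eCharacterizeProj}), namely the monomial truncation $\Pi_z^0 u = \sum_{\alpha\,:\,\mu_\alpha = z}(\alpha !)^{-1}(\partial^\alpha u(0))x^\alpha$, by carrying out three steps: reduce the identity to the dense subspace of polynomials; collapse the defining contour integral (\ref{eSpectralProjection}) on homogeneous polynomials to a finite-dimensional Riesz projection; and identify that finite-dimensional projection with $\Pi_z^0$ using the block structure of the normal form from Proposition \ref{pNormalForm}.

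First I would verify that $\Pi_z^0$ is a bounded idempotent and that $z$ is an isolated point of the spectrum. The functionals $u\mapsto(\alpha !)^{-1}\partial^\alpha u(0)$ are bounded on $H_{\Phi_2}(\Bbb{C}^n;h)$, since in these weighted spaces of entire functions point evaluations and their derivatives are bounded, and each $x^\alpha$ lies in $H_{\Phi_2}(\Bbb{C}^n;h)$ because $e^{-2\Phi_2/h}$ has Gaussian decay. Here partial ellipticity is used: the eigenvalues $\lambda_1,\dots,\lambda_n$ lie in the open upper half-plane, so $\jvRe\mu_\alpha = h\sum_j(2\alpha_j+1)\jvIm\lambda_j\to\infty$ as $|\alpha|\to\infty$; hence $\{\alpha\,:\,\mu_\alpha = z\}$ is finite, $\Pi_z^0$ is a finite sum of rank-one operators and so lies in $\mathcal{L}(H_{\Phi_2})$, it is visibly idempotent, and the spectrum (\ref{eSpectrum}) is discrete, so $z$ is isolated and $\Pi_z$ is given by (\ref{eSpectralProjection}) with $\Gamma$ a small positively oriented circle around $z$ meeting no other $\mu_\alpha$. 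Since polynomials are dense in $H_{\Phi_2}(\Bbb{C}^n;h)$ and both $\Pi_z$ and $\Pi_z^0$ are bounded, it suffices to prove $\Pi_z p = \Pi_z^0 p$ for $p$ a homogeneous polynomial of arbitrary degree $m$.

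The input I would draw from Proposition \ref{pNormalForm} is that $\tilde{q}^w(x,hD_x)$ maps the finite-dimensional space $\mathcal{P}_m$ of degree-$m$ homogeneous polynomials into itself and that, in the monomial basis of $\mathcal{P}_m$, the monomial $x^\alpha$ is a generalized eigenvector for $\mu_\alpha$ with $(\tilde{q}^w - \mu_\alpha)x^\alpha$ contained in the span of those $x^\beta$ with $|\beta| = m$ and $\mu_\beta = \mu_\alpha$; equivalently, $\mathcal{P}_m$ decomposes $\tilde{q}^w$-invariantly according to the value of $\mu_\alpha$, the summand attached to a value $w$ being $\opnm{span}\{x^\alpha : |\alpha| = m,\ \mu_\alpha = w\}$ and $\tilde{q}^w - w$ being nilpotent on it. For $\zeta \notin \Spec\tilde{q}^w(x,hD_x)$, the finitely many eigenvalues of $\tilde{q}^w|_{\mathcal{P}_m}$ are among the $\mu_\alpha$, so $\zeta - \tilde{q}^w$ maps $\mathcal{P}_m$ bijectively onto itself; and since $\zeta - \tilde{q}^w$ is globally injective, its inverse restricted to $\mathcal{P}_m$ is the inverse of the finite matrix $\zeta - \tilde{q}^w|_{\mathcal{P}_m}$. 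Therefore, for $p \in \mathcal{P}_m$,
\[
	\Pi_z p = \frac{1}{2\pi i}\int_\Gamma\bigl(\zeta - \tilde{q}^w|_{\mathcal{P}_m}\bigr)^{-1}p\,d\zeta,
\]
which is the Riesz projection of the finite-dimensional operator $\tilde{q}^w|_{\mathcal{P}_m}$ onto the portion of its spectrum enclosed by $\Gamma$, namely $\{z\}\cap\{\mu_\alpha : |\alpha| = m\}$. By the elementary identification of a finite-dimensional Riesz projection with the projection onto a generalized eigenspace along the complementary ones, together with the invariant splitting above, this Riesz projection is exactly the truncation $p \mapsto \sum_{|\alpha|=m,\ \mu_\alpha = z}(\alpha !)^{-1}(\partial^\alpha p(0))x^\alpha$. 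Summing over $m$ yields $\Pi_z p = \Pi_z^0 p$ for all polynomials $p$, and density together with boundedness completes the argument.

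The step I expect to be the main obstacle is the structural input from Proposition \ref{pNormalForm} in the case where $F$ is not diagonalizable: one needs not merely that $\tilde{q}^w$ is triangular in the monomial basis on each $\mathcal{P}_m$, but that its off-diagonal part keeps $x^\alpha$ within the span of the $x^\beta$ sharing the same value $\mu_\beta = \mu_\alpha$ — equivalently, that any Jordan coupling occurs only between monomials of equal degree and equal eigenvalue. This is precisely what the explicit normal form provides, since the Jordan blocks of $F$ restricted to $\Lambda^+$ couple only coordinates with a common eigenvalue and the quantized normal form is first order, acting on monomials by degree-preserving operators of the form $x_j\partial_{x_k}$; granted this, the remaining steps are routine manipulations with finite matrices and bounded operators.
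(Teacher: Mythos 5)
Your proposal is correct and follows essentially the same route as the paper: reduce to polynomials by density and continuity, exploit the $\tilde{q}^w$-invariance of finite-dimensional spaces of polynomials to collapse the resolvent in (\ref{eSpectralProjection}) to a finite matrix inverse (the paper's Lemma \ref{lRieszProjDecomp}, applied to $\mathcal{K}_{C_0}$ rather than to each homogeneous piece $\mathcal{P}_m$), and identify the resulting finite-dimensional Riesz projection with the monomial truncation via the Jordan-like structure of the normal form (the paper's Lemma \ref{lJordanLike} together with Remark \ref{rqtilde}, where $\gamma_j=0$ whenever $\lambda_j\neq\lambda_{j+1}$). The structural point you single out as the main obstacle is exactly the one the paper resolves by that remark.
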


The motivation behind establishing Theorem \ref{tCharacterizeProj} is to provide information about the spectral projections, particularly the operator norms thereof.  The approach of using dual bases for eigenvectors was used in \cite{DaKu2004} in finding exact rates of exponential growth for the operators described in Examples \ref{exDavies1} and \ref{exDavies2}; we follow a similar approach here.  The most tractable projections seem to be for eigenvalues with multiplicity 1, meaning the expansion in (\ref{eCharacterizeProj}) consists of a single term.  Note that this is true for every $\mu_\alpha$ simultaneously if and only if the eigenvalues of $F(q)$ which lie in the upper half-plane are rationally independent, which is a generic condition.  

As explained above in (\ref{eRescalingProjections}), there is no reason to describe the norms of spectral projections semiclassically; we therefore state the result with $h = 1$.  We furthermore see in Proposition \ref{pReversibility} that the set of $\Phi_2$ which may be obtained from Proposition \ref{pNormalForm} is exactly the set of strictly convex real-valued quadratic forms $\Phi$.  We therefore treat such a $\Phi$ as the object of study in the following theorem.

\begin{theorem}\label{tBoundProj}
Let $\Phi:\Bbb{C}^n\rightarrow\Bbb{R}$ be strictly convex, real-valued, and quadratic.  Write
\[
	\Pi_\alpha u(x) = (\alpha !)^{-1}(\partial^\alpha u(0))x^\alpha:H_\Phi(\Bbb{C}^n;1) \rightarrow H_\Phi(\Bbb{C}^n;1).
\]
Then there exists another quadratic strictly convex weight $\Phi^\dagger$ and a constant $C_\Phi$ for which, for all $\alpha \in \Bbb{N}^n_0$, we have the formula
\begin{equation}\label{eBoundProj}
	||\Pi_\alpha||_{\mathcal{L}(H_{\Phi}(\Bbb{C}^n;1))} = \frac{C_\Phi}{\alpha!2^{|\alpha|}}||x^\alpha||_{H_{\Phi}(\Bbb{C}^n;1)}||x^\alpha||_{H_{\Phi^\dagger}(\Bbb{C}^n;1)}.
\end{equation}
Here $C_\Phi = (2\pi)^{-n}\det(1-C_+^*C_+)^{-1/4}$ for $C_+ \in \Bbb{C}^{n\times n}$ a symmetric matrix associated with $\Phi$.  In view of Proposition \ref{pReversibility}, one may deduce the definitions of $C_+$ and $\Phi^\dagger$ from Remark \ref{rGCFromConvexity} and (\ref{ePhi2daggerMatrix}).
\end{theorem}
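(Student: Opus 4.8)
The plan is to compute the operator norm of the rank-one operator $\Pi_\alpha$ directly. Since $\Pi_\alpha u = (\alpha!)^{-1}(\partial^\alpha u(0))\, x^\alpha$, it factors as a linear functional $u \mapsto (\alpha!)^{-1}\partial^\alpha u(0)$ composed with multiplication by the fixed vector $x^\alpha \in H_\Phi$. Hence $\|\Pi_\alpha\|_{\mathcal{L}(H_\Phi)} = \|x^\alpha\|_{H_\Phi} \cdot \|\ell_\alpha\|_{(H_\Phi)^*}$ where $\ell_\alpha(u) = (\alpha!)^{-1}\partial^\alpha u(0)$. So everything reduces to identifying the norm of the evaluation-of-derivative functional $\ell_\alpha$ on $H_\Phi(\Bbb{C}^n;1)$. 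By the Riesz representation theorem, $\ell_\alpha(u) = \langle u, k_\alpha\rangle_{H_\Phi}$ for a unique $k_\alpha \in H_\Phi$, and $\|\ell_\alpha\|_{(H_\Phi)^*} = \|k_\alpha\|_{H_\Phi}$. The vector $k_\alpha$ is, up to the constant $(\alpha!)^{-1}$, the $\alpha$-th derivative at $0$ of the reproducing kernel of $H_\Phi$.

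First I would recall (or establish, using the standard Gaussian computations in weighted Bergman-type spaces $H_\Phi$ with $\Phi$ quadratic strictly convex, as in \cite{Sj1996} or Chapter 12 of \cite{SjLoR}) the explicit formula for the reproducing kernel $K_\Phi(x,y)$ of $H_\Phi(\Bbb{C}^n;1)$. For quadratic $\Phi$ this kernel is an exponential of a quadratic form in $(x,\bar{y})$, with a leading coefficient given by a determinant attached to $\Phi$; this is exactly where the constant $C_\Phi = (2\pi)^{-n}\det(1 - C_+^*C_+)^{-1/4}$ and the symmetric matrix $C_+$ associated with $\Phi$ (cf.\ Remark \ref{rGCFromConvexity}) enter. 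Writing $\Phi$ in the normalized block form from Remark \ref{rGCFromConvexity}, one reads off that $K_\Phi(x,y) = c_\Phi \exp(\text{quadratic})$, and then $k_\alpha(x) = (\alpha!)^{-1}\overline{\partial_y^\alpha K_\Phi(x,y)}\big|_{y=0}$.

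Next I would compute $\|k_\alpha\|_{H_\Phi}$. Because $\ell_\alpha(u) = \langle u, k_\alpha\rangle$, we have $\|k_\alpha\|^2 = \ell_\alpha(k_\alpha) = (\alpha!)^{-1}\partial^\alpha k_\alpha(0)$, which again is a bounded-order derivative of the reproducing kernel evaluated at the origin — an explicit polynomial-in-$\alpha$ times Gaussian-coefficient expression. The claim is that this quantity equals, up to the universal constant $C_\Phi$ and the factor $(\alpha! 2^{|\alpha|})^{-1}$, the square of the norm $\|x^\alpha\|_{H_{\Phi^\dagger}}$ for a companion weight $\Phi^\dagger$. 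The natural guess, which I would verify, is that $\Phi^\dagger$ is the weight whose reproducing kernel is obtained from $K_\Phi$ by the correspondence $C_+ \mapsto C_+$ composed with an inversion/transpose operation — concretely the $\Phi^\dagger$ whose matrix is given by (\ref{ePhi2daggerMatrix}). One checks this by matching moments: $\|x^\alpha\|_{H_{\Phi^\dagger}}^2 = \int |x^\alpha|^2 e^{-2\Phi^\dagger}\,dL$ is a Gaussian moment computable by Wick's formula, and one shows it coincides with $(\alpha!)^{-1}\partial^\alpha k_\alpha(0)$ up to the claimed constant and powers of $2$. Combining $\|\Pi_\alpha\| = \|x^\alpha\|_{H_\Phi}\,\|k_\alpha\|_{H_\Phi}$ with these identifications yields (\ref{eBoundProj}).

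The main obstacle is the bookkeeping in the second step: getting the reproducing kernel of $H_\Phi$ in a normalized form clean enough that the $\alpha$-dependence of $\partial^\alpha k_\alpha(0)$ transparently reorganizes into $\|x^\alpha\|_{H_{\Phi^\dagger}}$, and in particular pinning down that the correct dual weight is the one in (\ref{ePhi2daggerMatrix}) rather than some other quadratic form, along with tracking the exact powers of $2$ and $\pi$ and the determinantal constant. This is essentially a careful linear-algebra/Gaussian-integral computation, but the matching of the two Gaussian generating functions (the kernel-derivative side and the $\Phi^\dagger$-moment side) has to be done with some care to avoid sign and transpose errors; once the reproducing kernel formula is in hand, the rest is forced.
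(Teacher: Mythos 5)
Your factorization $\|\Pi_\alpha\|=\|x^\alpha\|_{H_\Phi}\,\|\ell_\alpha\|_{(H_\Phi)^*}$ and the identification of the Riesz representative $k_\alpha$ with the paper's dual vector $\phi_\alpha^\dagger$ are exactly right, and your route to computing $\|k_\alpha\|$ is genuinely different from the paper's. The paper never invokes the reproducing kernel: it characterizes $\phi_0^\dagger$ as the Gaussian annihilated by the adjoint multiplication operator $x^*_{H_\Phi}$, sets $\phi_\alpha^\dagger=(\alpha!)^{-1}(\partial_x^*)^\alpha\phi_0^\dagger$, and then builds an explicit metaplectic (FBI-type) unitary $\tilde{\mathcal{T}}:H_{\Phi_2}\to H_{\Phi_1^\dagger}$ that conjugates $\partial_x^*$ into a multiplication operator, so that $\phi_\alpha^\dagger$ is carried to an explicit constant times $x^\alpha$ in the dual space; the identity (\ref{eBoundProj}) then falls out of unitarity, and $\Phi^\dagger$ is \emph{produced} by the construction rather than guessed. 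Your route instead writes $k_\alpha$ as $(\alpha!)^{-1}$ times a derivative of the Bergman kernel $K_\Phi(x,y)=C\,e^{2\Psi(x,\bar y)}$ (with $\Psi$ the holomorphic polarization of $\Phi$ — note your $\phi_0^\dagger=\bar C e^{(x,\Phi''_{xx}x)}$ then matches (\ref{ephi0daggerDef}) immediately), and reduces everything to matching two explicit Gaussian generating functions: the Taylor coefficients of $K_\Phi$ at the origin against the moments $\int x^\alpha\bar x^\beta e^{-2\Phi^\dagger}dL$. That identity is true — indeed the paper's unitary argument shows the full Gram matrices match, not just the diagonal — so your verification would succeed, and it arguably trades the paper's metaplectic bookkeeping for a single completion-of-the-square/matrix-inversion computation. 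What your approach buys is a more self-contained, kernel-theoretic proof; what it loses is that $\Phi^\dagger$ must be taken from (\ref{ePhi2daggerMatrix}) and checked \emph{a posteriori} (and you should make sure to match the full quadratic form in $(t,\bar s)$, including the pure $t^2$ and $\bar s^2$ blocks coming from $\Phi''_{xx}$, not only the cross terms), and it does not by itself yield the paper's Lemma \ref{lAdjointEigenvectors} identifying the $\phi_\alpha^\dagger$ as generalized eigenfunctions of the adjoint, which the paper reuses for Theorem \ref{tOrthogonalProj}. The one step you should not gloss over is boundedness of $\ell_\alpha$ on $H_\Phi$ (equivalently $k_\alpha\in H_\Phi$), which the paper gets from the explicit form (\ref{ephidaggerDef}) together with (\ref{ephidaggerAndWeight}).
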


This result has three simple corollaries, which we formally state in Section \ref{ssCorollaries}.  First, we have exponential upper bounds for the spectral projections of any elliptic or partially elliptic operator.  Second, we have a complete asymptotic expansion for spectral projections in dimension one, where eigenvalues are automatically simple.  Finally, we have a formula for the rate of exponential growth, regardless of dimension, in the generic situation when eigenvalues are simple.

It is useful for analysis of $q^w(x,hD_x)$ to have some orthogonal decomposition of $L^2(\Bbb{R}^n)$ into $q^w(x,hD_x)$-invariant subspaces.  That collections of Hermite functions of fixed degree form such a decomposition for Kramers-Fokker-Planck operators with quadratic potential was known since \cite{RiBook}, as described in Section 5.5 of \cite{HeNiBook}.  We explore one such operator in Example \ref{exKFP1}, and we have the same decomposition for an operator whose Hamilton map has Jordan blocks in Example \ref{exJordan1}.

The question of orthogonal spectral projections for partially elliptic operators has been raised in the recent work \cite{OtPaPS2012}, which focuses on semigroup bounds for such operators.  Working under the assumptions that the ground state of $q^w(x,hD_x)$ matches that of $q^w(x,hD_x)^*$ and that the operator is totally real, the authors of \cite{OtPaPS2012} show strong similarity, on the level of semigroups, between the behavior of the spectral projection for $q^w(x,hD_x)$ and $\{\mu_0\}$ and the behavior of the orthogonal projection onto the span of the corresponding eigenfunction.

Inspired by this work, we observe that the analysis here beginning at Theorem \ref{tCharacterizeProj} and leading towards Theorem \ref{tBoundProj} puts us in a position to describe necessary and sufficient conditions on $q$ for this projection to be orthogonal.

\begin{theorem}\label{tOrthogonalProj}
Let $q:\Bbb{R}^{2n}\rightarrow\Bbb{C}$ be quadratic and partially elliptic with trivial singular space as in (\ref{eRealSemidef}) and (\ref{eTrivialS}).  Recall the definitions of $\Lambda^{\pm} = \Lambda^{\pm}(q)$ in (\ref{eLambdaDef}) and $\mu_\alpha$ in (\ref{emuDef}).  Let $\Pi_{\mu_0}$ be the spectral projection for $q^w(x,hD_x)$ and $\{\mu_0\}$, as in (\ref{eSpectralProjection}).  Then the following are equivalent:
\begin{enumerate}
	\item\label{iOrthogker} the ground states of the operator and the adjoint match, 
		\[
			\ker(q^w(x,hD_x)-\mu_0) = \ker(q^w(x,hD_x)^* - \overline{\mu_0});
		\]
	\item\label{iOrthogbar} the stable manifolds associated with $q$ are conjugate, $\Lambda^+ = \overline{\Lambda^-}$; and
	\item\label{iOrthogproj} the projection $\Pi_{\mu_0}$ is orthogonal on $L^2(\Bbb{R}^n)$.
\end{enumerate}
\end{theorem}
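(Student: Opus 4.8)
The plan is to use the concrete description of the spectral projection from Theorem \ref{tCharacterizeProj} to reduce everything to a statement about the weight $\Phi_2$, and then to recognize that $\Phi_2$ is conjugation-symmetric exactly when the geometry of $\Lambda^\pm$ is. I would prove the cyclic chain of implications \eqref{iOrthogproj} $\Rightarrow$ \eqref{iOrthogker} $\Rightarrow$ \eqref{iOrthogbar} $\Rightarrow$ \eqref{iOrthogproj}, though in practice the implication \eqref{iOrthogker} $\Leftrightarrow$ \eqref{iOrthogproj} is essentially abstract operator theory once one knows $\mu_0$ is a simple eigenvalue, and only the link to \eqref{iOrthogbar} uses the structure of quadratic operators.

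First I would dispose of \eqref{iOrthogker} $\Leftrightarrow$ \eqref{iOrthogproj}. The eigenvalue $\mu_0 = \frac{h}{i}\sum \lambda_j$ is always a simple eigenvalue of $q^w(x,hD_x)$: in the normal form it corresponds to $\alpha = 0$, and \eqref{eCharacterizeProj} shows $\Pi_{\mu_0}u = u(0)\cdot 1$ is rank one, so $\Pi_{\mu_0}$ is always a rank-one (in general oblique) projection onto $\ker(q^w - \mu_0)$. A rank-one projection $P$ with range $\mathbb{C}u_0$ is orthogonal iff its range equals its co-range, i.e.\ iff $\ker(P^*) = (\operatorname{ran} P)^\perp$ forces $\operatorname{ran}(P^*) = \operatorname{ran}(P)$; since $P^*$ is the spectral projection for $q^w(x,hD_x)^*$ and $\overline{\mu_0}$ (because spectral projections transform correctly under adjoints, $(P_{\{z\},A})^* = P_{\{\bar z\},A^*}$ for the oriented contour), its range is $\ker(q^w(x,hD_x)^* - \overline{\mu_0})$. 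Hence $\Pi_{\mu_0}$ orthogonal $\iff$ these two kernels coincide, which is precisely the content of \eqref{iOrthogker} $\Leftrightarrow$ \eqref{iOrthogproj}. (One should note that a nontrivial oblique rank-one projection has operator norm strictly greater than $1$, so orthogonality of $\Pi_{\mu_0}$ is equivalent to $\|\Pi_{\mu_0}\| = 1$; via \eqref{eBoundProj} with $\alpha = 0$ this is the statement $C_\Phi \|1\|_{H_\Phi}\|1\|_{H_{\Phi^\dagger}} = 1$, giving an independent check.)

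The substantive implication is \eqref{iOrthogker} $\Leftrightarrow$ \eqref{iOrthogbar}. I would work through the FBI-Bargmann side. In the normal form, $\tilde q^w$ acts on $H_{\Phi_2}(\mathbb{C}^n;h)$ and its ground state is the constant function $1$, while the adjoint $(\tilde q^w)^*$ on $H_{\Phi_2}$ has ground state — by the characterization of $(\tilde q^w)^*$ and the explicit description of its lowest eigenfunction — a Gaussian $e^{i\varphi(x)/h}$ whose phase is governed by the Lagrangian plane $\overline{\Lambda^-}$ transported through the canonical transformation realizing the normal form. The constant function is the ground state of $(\tilde q^w)^*$ on $H_{\Phi_2}$ precisely when that Gaussian is constant, i.e.\ when the two Lagrangian planes controlling the two ground states coincide; unwinding the normal form correspondence (Proposition \ref{pNormalForm}), the plane for $q^w$'s ground state is $\Lambda^+$ and the plane for $q^w(x,hD_x)^*$'s ground state is $\overline{\Lambda^-}$ — using that $F(\bar q(\bar{\cdot})) = -\overline{F(q)}$ sends eigenvalues in the upper half-plane to their conjugates and $\Lambda^+(q^*) = \overline{\Lambda^-(q)}$. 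Equivalently, and perhaps cleaner to write: the positive Lagrangian plane $\Lambda^+$ is identified (via Proposition \ref{pLambdaPositive}) with a strictly convex quadratic weight, the ground state of $q^w$ is the Gaussian for that weight, the ground state of the adjoint is the Gaussian for the weight attached to $\overline{\Lambda^-}$, and Gaussians span the same line iff the weights agree iff $\Lambda^+ = \overline{\Lambda^-}$.

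The main obstacle I anticipate is bookkeeping the chain of conjugations and canonical transformations cleanly: one must track how the operation $q \mapsto \bar q(\bar x, \bar\xi)$ (which gives the adjoint symbol on $L^2(\mathbb{R}^n)$), the normal-form canonical transformation $\kappa$, the complex-conjugation symmetry of the weighted space $H_{\Phi_2}$, and the assignment $\Lambda^\pm \mapsto$ (convex weight) all interact, and to verify that the symmetry $\Lambda^+ = \overline{\Lambda^-}$ is exactly the fixed-point condition for the relevant anti-linear involution. This is where sign errors and misidentifications are most likely, so I would set up all four objects explicitly in coordinates at the start of the argument, check the elliptic case first (where $\Lambda^\pm$ are the graphs of $\pm$-definite matrices and the condition $\Lambda^+ = \overline{\Lambda^-}$ reads as a matrix being purely imaginary in a suitable sense), and only then invoke the density/limiting arguments needed to pass from the elliptic to the partially elliptic setting, using that Proposition \ref{pLambdaPositive} already supplies the needed positivity of $\Lambda^\pm$ in full generality.
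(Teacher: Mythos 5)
Your proposal is correct in substance but organizes the argument differently from the paper. The paper proves the theorem by showing that each of the three conditions is equivalent to the single algebraic statement $(\Phi_2)''_{xx}=0$ (equivalently $C_+=0$, equivalently $A_+=i$): condition (2) because the real transformation $\kappa$ of (\ref{ekappaDef}) sends $\Lambda^-$ to $\{(y,-iy)\}$ and commutes with complex conjugation, so $\Lambda^+=\overline{\Lambda^-}$ reads $A_+=i$; condition (3) because orthogonality of the projection onto constants along the higher monomials is equivalent to $x^*_{H_{\Phi_2}}1=0$, and (\ref{exStar}) gives $x^*1=-((\Phi_2)''_{x\bar x})^{-1}(\Phi_2)''_{xx}x$; condition (1) because by Lemma \ref{lAdjointEigenvectors} the adjoint ground state is $\phi_0^\dagger=C_0\exp((x,(\Phi_2)''_{xx}x)/h)$, which is constant iff $(\Phi_2)''_{xx}=0$. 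You instead handle (1)$\Leftrightarrow$(3) by abstract operator theory --- $\Pi_{\mu_0}$ is rank one since $\jvRe(\mu_\alpha-\mu_0)=2h\sum\alpha_j\jvIm\lambda_j>0$ for $\alpha\neq0$, and a rank-one projection is orthogonal iff $\operatorname{ran}\Pi=\operatorname{ran}\Pi^*$ --- and (1)$\Leftrightarrow$(2) by matching ground-state Gaussians, which after applying $\kappa$ amounts to exactly the paper's condition $A_+=i$. Your route buys a cleaner, essentially normal-form-free proof of (1)$\Leftrightarrow$(3) at the cost of needing the simplicity of $\mu_0$ up front; the paper's route has the byproduct that all three conditions are detected by the pluriharmonic part of $\Phi_2$, which is what feeds Remark \ref{rFurtherDecomposition} and the computations around Theorem \ref{tBoundProj}.

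Two points need repair before this is a proof. First, the formula $F(\bar q(\bar\cdot))=-\overline{F(q)}$ is wrong: since $\sigma$ has real coefficients and the adjoint symbol is $\bar q(X)=\overline{q(\overline X)}$, one has $F(\bar q)=\overline{F(q)}$ with no minus sign, and it is precisely this that exchanges the half-planes and gives $\Lambda^{\pm}(\bar q)=\overline{\Lambda^{\mp}(q)}$; with your sign the eigenvalue map $\lambda\mapsto-\bar\lambda$ preserves the upper half-plane and would yield $\Lambda^+(\bar q)=\overline{\Lambda^+(q)}$, which destroys the equivalence with (2). Second, the ``density/limiting argument from the elliptic to the partially elliptic case'' you anticipate at the end is not needed: once Proposition \ref{pLambdaPositive} and Proposition \ref{pNormalForm} are in hand in the partially elliptic setting (as they are), the identification of the two ground-state Gaussians is an exact finite computation, not a limit.
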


\begin{remark}\label{rFurtherDecomposition} A further decomposition immediately follows if any of these conditions hold.  Studying the unitarily equivalent operator $\tilde{q}^w(x,hD_x)$ acting on $H_{\Phi_2}(\Bbb{C}^n;h)$, we have that the spaces of polynomials homogeneous of fixed degree,
\[
	E_m = \opnm{span}\{x^\alpha\::\:|\alpha| = m\},
\]
are orthogonal $\tilde{q}^w(x,hD_x)$-invariant subspaces of $H_{\Phi_2}(\Bbb{C}^n;h)$ which together have dense span.  We also have that
\[
	\opnm{Spec}(\tilde{q}^w(x,hD_x)|_{E_m}) = \{\mu_\alpha\::\: |\alpha| = m\}.
\]
Some illustrations using this decomposition may be found in Section \ref{ssNumerics}.
\end{remark}

\subsection{Corollaries on the growth of spectral projections}\label{ssCorollaries}

In Section \ref{sComputations} we derive three simple corollaries of Theorem \ref{tBoundProj}.

First, we have an exponential upper bound for spectral projections for the quadratic operators we have been considering.  We note that, following Remark \ref{rNotSharp}, we do not expect this bound to be sharp in dimension $n \geq 2$ in general.

\begin{corollary}\label{cUpperBound}
Let $q:\Bbb{R}^{2n}\rightarrow\Bbb{C}$ be quadratic and partially elliptic with trivial singular space as in (\ref{eRealSemidef}) and (\ref{eTrivialS}). Let $\mu_\alpha$ be as defined in (\ref{emuDef}). Using (\ref{eSpectralProjection}), write
\[
	\Pi_z = P_{\{z\}, q^w(x,hD_x)}:L^2(\Bbb{R}^n) \rightarrow L^2(\Bbb{R}^n).
\]
For $\Phi_2$ taken from Proposition \ref{pNormalForm} and $\Phi_2^\dagger$ derived from $\Phi_2$ as in Theorem \ref{tBoundProj}, let
\[
	A_1 = \inf_{|\omega|=1} 4\Phi_2(\omega), \quad A_2 = \inf_{|\omega|=1} 4\Phi_2^\dagger(\omega).
\]
Then
\[
	||\Pi_{\mu_\alpha}||_{\mathcal{L}(L^2(\Bbb{R}^n))} \leq \BigO(1+|\alpha|^{n-1})\left(A_1A_2\right)^{-|\alpha|/2}.
\]
\end{corollary}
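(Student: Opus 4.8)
The plan is to transport the problem to the model space and invoke Theorem \ref{tBoundProj} term by term. Since the unitary equivalence of Proposition \ref{pNormalForm} preserves operator norms, and by (\ref{eRescalingProjections}) these norms do not depend on $h$, I would fix $h=1$ and replace $\Pi_{\mu_\alpha}$ by the spectral projection of $\tilde q^w(x,D_x)$ onto $\{\mu_\alpha\}$ acting on $H_{\Phi_2}(\Bbb C^n;1)$. Theorem \ref{tCharacterizeProj} identifies this operator with the finite sum $\sum_{\beta\,:\,\mu_\beta=\mu_\alpha}\Pi_\beta$ of monomial projections, so by the triangle inequality it suffices to estimate a single $\|\Pi_\beta\|_{\mathcal L(H_{\Phi_2})}$ and to count the indices $\beta$ in the multiplicity class of $\alpha$.

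For a single $\beta$, Theorem \ref{tBoundProj} gives $\|\Pi_\beta\|_{\mathcal L(H_{\Phi_2})} = C_{\Phi_2}(\beta!\,2^{|\beta|})^{-1}\,\|x^\beta\|_{H_{\Phi_2}}\|x^\beta\|_{H_{\Phi_2^\dagger}}$, so the only analytic input needed is an upper bound for the monomial norm $\|x^\beta\|_{H_\Phi}$ attached to a strictly convex real quadratic weight $\Phi$. I would use homogeneity of $\Phi$: writing $A=\inf_{|\omega|=1}4\Phi(\omega)$ one has $\Phi(x)\ge\tfrac14A|x|^2$ pointwise, hence $\|x^\beta\|_{H_\Phi}^2\le\int_{\Bbb C^n}|x^\beta|^2e^{-A|x|^2/2}\,dL(x)$, and this last integral is the elementary Gaussian moment $\pi^n\,\beta!\,(2/A)^{|\beta|+n}$. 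Applying this once with $\Phi=\Phi_2$ (so $A=A_1$) and once with $\Phi=\Phi_2^\dagger$ (so $A=A_2$) and substituting into the formula of Theorem \ref{tBoundProj}, the factors $\beta!$ and $2^{|\beta|}$ cancel against the Gaussian moments and one is left with $\|\Pi_\beta\|_{\mathcal L(H_{\Phi_2})}\le C\,(A_1A_2)^{-|\beta|/2}$, where $C=C_{\Phi_2}\,\pi^n2^n\,(A_1A_2)^{-n/2}$ does not depend on $\beta$.

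What remains is to sum over $\{\beta\in\Bbb N_0^n:\mu_\beta=\mu_\alpha\}$, and this is the step I expect to require the most care. From (\ref{emuDef}) one has $\jvRe\mu_\beta=h\sum_j(2\beta_j+1)\jvIm\lambda_j$, and under (\ref{eRealSemidef})--(\ref{eTrivialS}) every upper-half-plane eigenvalue $\lambda_j$ satisfies $\jvIm\lambda_j>0$ (cf.\ (\ref{eEigenvalUHPLHP}) and Proposition \ref{pLambdaPositive}); hence $\mu_\beta=\mu_\alpha$ confines $\beta$ to a bounded slice of a hyperplane, $\{\sum_j\beta_j\jvIm\lambda_j=\mathrm{const}\}\cap\Bbb N_0^n$, a compact set containing $\BigO(1+|\alpha|^{n-1})$ lattice points, all with $|\beta|\le C(1+|\alpha|)$. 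Combining this count with the per-$\beta$ bound yields the asserted estimate, the exponent $n-1$ coming precisely from the fact that the multiplicity class lies on a hyperplane rather than filling a box, and the exponential rate being governed by $(A_1A_2)^{-1/2}$ (recall $A_1A_2\le1$, which can be read off by comparing Theorem \ref{tBoundProj} against the trivial lower bound $\|\Pi_\beta\|\ge1$). When the upper-half-plane eigenvalues of $F(q)$ are rationally independent the class is the singleton $\{\alpha\}$ and the estimate is immediate from the preceding paragraph; the genuine work is thus isolated in the bookkeeping of the multiplicity class, while the analytic heart of the argument is just the Gaussian moment identity feeding into Theorem \ref{tBoundProj}.
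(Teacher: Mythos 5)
Your proposal is correct and follows essentially the same route as the paper: reduce via Proposition \ref{pNormalForm}, Theorem \ref{tCharacterizeProj}, and the triangle inequality to the monomial projections $\Pi_\beta$ in the multiplicity class, bound each via Theorem \ref{tBoundProj} together with the comparison $\Phi(x)\geq\tfrac14 A|x|^2$, and count the class as $\BigO(1+|\alpha|^{n-1})$ lattice points on a hyperplane slice. The only cosmetic difference is that the paper packages the per-term estimate as a normalized integral $J(\Phi,\alpha)$ over the unit sphere (which it reuses for Corollaries \ref{cDim1Asymptotic} and \ref{cExponentialGrowth}) and pulls out $\inf_{|\omega|=1}4\Phi(\omega)$, whereas you compare directly with a radial Gaussian moment; these are the same computation.
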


In (spatial) dimension 1, we have a complete asymptotic expansion for spectral projections as the size of the eigenvalue becomes large.

\begin{corollary}\label{cDim1Asymptotic}
Let $q:\Bbb{R}^{2}\rightarrow\Bbb{C}$ be quadratic and partially elliptic with trivial singular space as in (\ref{eRealSemidef}) and (\ref{eTrivialS}).  By (\ref{eEigenvalUHPLHP}), there exists only one (algebraically simple) eigenvalue of $F$ with positive imaginary part; call this eigenvalue $\lambda$.  Let
\[
	\mu_N = \frac{\lambda}{i}(2N+1).
\]
Using (\ref{eSpectralProjection}), write
\[
	\Pi_N = P_{\{\mu_N\}, q^w(x,D_x)}:L^2(\Bbb{R}^1) \rightarrow L^2(\Bbb{R}^1).
\]
In dimension 1, the $C_+$ of Theorem \ref{tBoundProj} must be a complex number with $|C_+| < 1$.  After dividing out the rate of exponential growth identified in \cite{DaKu2004}, there exists a complete asymptotic expansion
\[
	\left(\frac{1-|C_+|}{1+|C_+|}\right)^{N/2}||\Pi_N||_{\mathcal{L}(L^2(\Bbb{R}^1))} \sim \sum_{j=0}^\infty c_j N^{-j-1/2}
\]
as $N \rightarrow \infty$, for some $\{c_j\}_{j=0}^\infty$ a sequence of real numbers depending only on $C_+$.  We furthermore compute that
\[
	c_0 =(2\pi|C_+|)^{-1/2}\left(\frac{1+|C_+|}{1-|C_+|}\right)^{1/4}.
\]
\end{corollary}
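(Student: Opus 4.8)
The plan is to prove Corollary \ref{cDim1Asymptotic} by inserting the one-dimensional case of Theorem \ref{tBoundProj} into a classical Laplace-type expansion of a trigonometric integral. First I would reduce $\|\Pi_N\|$ to a product of weighted monomial norms: since $\jvIm\lambda > 0$ forces $\lambda \neq 0$, the eigenvalues $\mu_N = \tfrac{\lambda}{i}(2N+1)$ are pairwise distinct, hence algebraically simple in view of (\ref{eSpectrum}) and (\ref{emuDef}), so combining Theorem \ref{tCharacterizeProj} with the unitary equivalence of Proposition \ref{pNormalForm} shows that $\Pi_N$ on $L^2(\Bbb{R})$ is unitarily equivalent to the single-term map $u \mapsto (N!)^{-1}(\partial^N u(0))\,x^N$ on $H_{\Phi_2}(\Bbb{C};1)$. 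Theorem \ref{tBoundProj} with $n = 1$, $h = 1$, $\alpha = N$, and $\Phi = \Phi_2$ then yields
\[
	\|\Pi_N\|_{\mathcal{L}(L^2(\Bbb{R}))} = \frac{C_{\Phi_2}}{N!\,2^{N}}\,\|x^N\|_{H_{\Phi_2}(\Bbb{C};1)}\,\|x^N\|_{H_{\Phi_2^\dagger}(\Bbb{C};1)}.
\]

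Next I would evaluate the monomial norms. Writing a strictly convex real quadratic weight in one complex variable as $\Phi(x) = a|x|^2 + \jvRe(bx^2)$ with $a > |b| \ge 0$ and passing to polar coordinates $x = re^{i\theta}$, the radial integral is elementary (substitute $s = r^2$) and, the phase of $b$ being absorbed into the angular variable, one obtains
\[
	\|x^N\|_{H_\Phi(\Bbb{C};1)}^2 = \frac{N!}{2^{N+2}}\int_0^{2\pi}\frac{d\psi}{(a + |b|\cos\psi)^{N+1}}.
\]
I would then read the one-dimensional dictionary off Remark \ref{rGCFromConvexity} and (\ref{ePhi2daggerMatrix}): writing $\rho := |C_+|$, one has $|C_+| = |b|/a \in [0,1)$ and $C_{\Phi_2} = (2\pi)^{-1}(1-\rho^2)^{-1/4}$; moreover the dual weight $\Phi_2^\dagger$ has associated parameter of the \emph{same} modulus $\rho$, and the product of the two Hermitian coefficients satisfies $a\,a^\dagger = \tfrac{1}{16}(1-\rho^2)^{-1}$. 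Substituting these and pulling $a$ (resp.\ $a^\dagger$) out of the respective integrals makes all the scale-dependent data cancel, leaving
\[
	\|\Pi_N\|_{\mathcal{L}(L^2(\Bbb{R}))} = \frac{1}{2\pi}\,(1-\rho^2)^{\frac{2N+1}{4}}\int_0^{2\pi}\frac{d\psi}{(1+\rho\cos\psi)^{N+1}} .
\]
When $\rho = 0$ this equals $1$ for every $N$ (orthogonal projection), so the asymptotic statement is to be read with $C_+ \ne 0$.

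Finally I would treat $J_m(\rho) := \int_0^{2\pi}(1+\rho\cos\psi)^{-m}\,d\psi$, $m = N+1 \to \infty$, by Laplace's method. The phase $g(\psi) = \log(1+\rho\cos\psi)$ is real-analytic and $2\pi$-periodic, with a unique non-degenerate minimum at $\psi = \pi$ (where $g(\pi) = \log(1-\rho)$ and $g''(\pi) = \rho/(1-\rho)$) and its only other critical point a maximum at $\psi = 0$ contributing an exponentially smaller $\BigO((1+\rho)^{-m})$; hence there is a complete asymptotic expansion $J_m(\rho) \sim (1-\rho)^{-m}\sum_{j\ge 0}\gamma_j(\rho)\,m^{-j-1/2}$ with $\gamma_0(\rho) = (2\pi(1-\rho)/\rho)^{1/2}$ and every $\gamma_j(\rho)$ real and depending only on $\rho$. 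Substituting this, matching the powers of $1\mp\rho$ against the normalizing factor $\left(\tfrac{1-\rho}{1+\rho}\right)^{N/2}$, and re-expanding $(N+1)^{-j-1/2}$ in powers of $N^{-1}$ then gives $\left(\tfrac{1-|C_+|}{1+|C_+|}\right)^{N/2}\|\Pi_N\| \sim \sum_{j\ge 0}c_j N^{-j-1/2}$ with $c_j = (2\pi)^{-1}(1-\rho)^{-3/4}(1+\rho)^{1/4}\,\tilde\gamma_j(\rho)$ real and depending only on $\rho = |C_+|$, and in particular $c_0 = (2\pi)^{-1}(1-\rho)^{-3/4}(1+\rho)^{1/4}\gamma_0(\rho) = (2\pi|C_+|)^{-1/2}\left(\tfrac{1+|C_+|}{1-|C_+|}\right)^{1/4}$.

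I expect the one genuinely delicate point to be the middle step: correctly extracting from Remark \ref{rGCFromConvexity} and (\ref{ePhi2daggerMatrix}) that $\Phi_2$ and $\Phi_2^\dagger$ carry the same shape parameter $|C_+|$ and that the product of their Hermitian coefficients is exactly $\tfrac{1}{16}(1-|C_+|^2)^{-1}$. It is precisely this pairing that forces the scale-dependent constants $a$ and $a^\dagger$ to cancel and isolates $|C_+|$ as the sole surviving parameter; everything after that is the standard Laplace/Watson expansion together with elementary bookkeeping of powers of $1\pm|C_+|$.
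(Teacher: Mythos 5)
Your proposal is correct and follows essentially the same route as the paper: reduce to the product of monomial norms via Theorem \ref{tBoundProj}, evaluate them in polar coordinates to get the integral $\int_0^{2\pi}(1\pm|C_+|\cos\psi)^{-N-1}\,d\psi$, use the one-dimensional duality ($|C_+^\dagger|=|C_+|$ and $G^\dagger=(1-|C_+|^2)^{-1/2}G^{-1}$) to cancel the scale factors, and finish with Laplace's method. All the constants check out, including the bookkeeping of powers of $1\pm|C_+|$ and the value of $c_0$.
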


In the case of higher dimensions, the maximization problem leading to Corollary \ref{cDim1Asymptotic} is much more difficult.  In the generic case of simple eigenvalues, we are nonetheless able to identify the rate of exponential growth for spectral projections along rays $\{\mu_{\lambda \beta}\::\: \lambda \beta \in \Bbb{N}_0^n\}$ for $\beta \in (\overline{\Bbb{R}_+})^n$ fixed as $\lambda \rightarrow \infty$.

While this provides significant information on the exponential growth of spectral projections for a broad class of non-normal quadratic operators, the author feels that this result in higher dimensions is rather preliminary and hopes to return to the subject in later work.

\begin{corollary}\label{cExponentialGrowth}
	Let $\Phi:\Bbb{C}^n\rightarrow\Bbb{R}$ be strictly convex, real-valued, and quadratic.  Write
	\[
		\Pi_\alpha u(x) = (\alpha !)^{-1}(\partial^\alpha u(0))x^\alpha:H_\Phi(\Bbb{C}^n;1) \rightarrow H_\Phi(\Bbb{C}^n;1).
	\]
	Let $\Phi^\dagger$ be the dual weight as in Theorem \ref{tBoundProj}.

	Consider $\beta \in (\overline{\Bbb{R}_+})^n\backslash \{0\}$ normalized so that $|\beta| = \sum_{j=1}^n\beta_j = 1$.  For those $\lambda \in \Bbb{R}_+$ for which $\lambda\beta \in \Bbb{N}_0^n$, we have the following exponential rate of growth in the limit $\lambda \rightarrow \infty$:
	\begin{multline*}
		\lambda^{-1}\log||\Pi_{\lambda\beta}||_{L^2(\Bbb{R}^n)} 
		\\ = \frac{1}{2}\log \left(\sup_{|\omega| = 1} (4\Phi(\omega))^{-1}|\omega^\beta|^2\right) + \frac{1}{2}\log \left(\sup_{|\omega| = 1} (4\Phi^\dagger(\omega))^{-1}|\omega^\beta|^2\right)
		\\ - \sum_{j\::\:\beta_j \neq 0} \beta_j \log \beta_j + \BigO(\lambda^{-1}\log \lambda).
	\end{multline*}
	As with multi-indices, we define $|\omega^\beta|^2 = \prod_{j=1}^n |\omega_j|^{2\beta_j}$.

	Furthermore, consider $q:\Bbb{R}^{2n}\rightarrow\Bbb{C}$ quadratic and partially elliptic with trivial singular space as in (\ref{eRealSemidef}) and (\ref{eTrivialS}), with $\mu_\alpha$ defined in (\ref{emuDef}).  Due to the unitary equivalence in Proposition \ref{pNormalForm} with $\Phi_2$ provided therein, the same rate of growth holds for the norm of the classical ($h = 1$) spectral projections
	\[
		||P_{\{\mu_{\lambda \beta}\}, q^w(x,D_x)}||_{\mathcal{L}(L^2(\Bbb{R}^n))} = ||\Pi_{\lambda\beta}||_{\mathcal{L}(H_{\Phi_2}(\Bbb{C}^n;1))}
	\]
	so long as we assume that the eigenvalue $\mu_{\lambda\beta} \in \opnm{Spec}q^w(x,D_x)$ is simple.
\end{corollary}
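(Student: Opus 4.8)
The plan is to build everything on the exact identity furnished by Theorem~\ref{tBoundProj},
\[
	\|\Pi_\alpha\|_{\mathcal{L}(H_\Phi(\Bbb{C}^n;1))} = \frac{C_\Phi}{\alpha!\,2^{|\alpha|}}\,\|x^\alpha\|_{H_\Phi(\Bbb{C}^n;1)}\,\|x^\alpha\|_{H_{\Phi^\dagger}(\Bbb{C}^n;1)},
\]
so that, taking logarithms, dividing by $\lambda$, and putting $\alpha = \lambda\beta$, I only need the leading-order behavior of three quantities: $\lambda^{-1}\log(\lambda\beta)!$, $\lambda^{-1}\log\|x^{\lambda\beta}\|_{H_\Phi(\Bbb{C}^n;1)}$, and the same with $\Phi^\dagger$ in place of $\Phi$. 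The prefactor contributes only $\lambda^{-1}\log C_\Phi = \BigO(\lambda^{-1})$, and $\lambda^{-1}\log 2^{|\alpha|} = \log 2$. For the factorial, Stirling's formula applied in each coordinate --- indices with $\beta_j = 0$ give $(\lambda\beta_j)! = 1$ and drop out --- together with $\sum_j\beta_j = 1$, yields
\[
	\lambda^{-1}\log(\lambda\beta)! = \log\lambda + \sum_{j\,:\,\beta_j\neq0}\beta_j\log\beta_j - 1 + \BigO(\lambda^{-1}\log\lambda).
\]

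The heart of the argument is an asymptotic for $\|x^{\lambda\beta}\|^2_{H_\Psi(\Bbb{C}^n;1)}$ with $\Psi$ any strictly convex real-valued quadratic form. Rescaling $x = \sqrt{\lambda}\,y$, using $|\alpha| = \lambda$, the homogeneity $\Psi(\sqrt\lambda\,y) = \lambda\Psi(y)$, and $|y^{\lambda\beta}|^2 = e^{2\lambda\sum_j\beta_j\log|y_j|}$, one gets
\[
	\|x^{\lambda\beta}\|^2_{H_\Psi(\Bbb{C}^n;1)} = \lambda^{\,n+\lambda}\int_{\Bbb{C}^n}e^{2\lambda g_\Psi(y)}\,dL(y),\qquad g_\Psi(y) = \sum_{j=1}^n\beta_j\log|y_j| - \Psi(y).
\]
I would then establish the Laplace-type estimate $\log\int e^{2\lambda g_\Psi}\,dL(y) = 2\lambda\sup_y g_\Psi + \BigO(\log\lambda)$. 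Strict convexity gives $\Psi(y)\geq c|y|^2$, while $|y_j|\leq|y|$ gives $\sum_j\beta_j\log|y_j|\leq\log|y|$; hence $g_\Psi$ is bounded above and tends to $-\infty$ both at infinity and as $y$ approaches a coordinate hyperplane $\{y_j=0\}$ with $\beta_j>0$, so it attains its supremum at a point $y^*$ all of whose coordinates with $\beta_j>0$ are nonzero, near which $g_\Psi$ is real-analytic. The upper bound comes from $2\lambda g_\Psi \leq 2(\lambda-1)\sup g_\Psi + 2g_\Psi$ together with $\int e^{2g_\Psi}\,dL(y)<\infty$ (so in fact the error here is only $\BigO(1)$); the lower bound comes from restricting the integral to a small ball about $y^*$, bounding $g_\Psi$ there below by a quadratic, and evaluating the resulting Gaussian integral, which costs a factor $\lambda^{-N}$, i.e.\ $\BigO(\log\lambda)$ after taking logarithms. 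Coordinates with $\beta_j = 0$ simply factor out as convergent Gaussian integrals and contribute only to the $\BigO(\log\lambda)$ term.

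Next I would compute $\sup_y g_\Psi$ in closed form. Writing $y = r\omega$ with $r > 0$ and $|\omega| = 1$, homogeneity gives $g_\Psi(r\omega) = \log r + \log|\omega^\beta| - r^2\Psi(\omega)$; optimizing first in $r$ at $r^2 = (2\Psi(\omega))^{-1}$ and then in $\omega$ produces
\[
	\sup_y g_\Psi = \tfrac12\log 2 - \tfrac12 + \tfrac12\log\Big(\sup_{|\omega|=1}(4\Psi(\omega))^{-1}|\omega^\beta|^2\Big).
\]
Inserting this with $\Psi = \Phi$ and $\Psi = \Phi^\dagger$ into the expression for $\lambda^{-1}\log\|\Pi_{\lambda\beta}\|$, and using the factorial asymptotic and $\lambda^{-1}\log 2^{|\alpha|} = \log 2$, one finds that all $\log\lambda$ terms cancel ($-1 + \tfrac12 + \tfrac12 = 0$), all $\log 2$ terms cancel, and the remaining constants cancel ($1 - \tfrac12 - \tfrac12 = 0$); what is left is precisely the asserted formula, with error $\BigO(\lambda^{-1}\log\lambda)$. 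Finally, the statement for $q^w(x,D_x)$ follows because Proposition~\ref{pNormalForm} gives a unitary equivalence of $q^w(x,D_x)$ on $L^2(\Bbb{R}^n)$ with $\tilde q^w(x,D_x)$ on $H_{\Phi_2}(\Bbb{C}^n;1)$, which preserves operator norms, and, when $\mu_{\lambda\beta}$ is a simple eigenvalue, the sum in Theorem~\ref{tCharacterizeProj} reduces to the single term $\Pi_{\lambda\beta}$ (with $\Phi = \Phi_2$), so the rate computed above applies verbatim.

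I expect the only genuinely delicate point to be the quantitative Laplace estimate: one must verify that the maximizer of $g_\Psi$ stays away from the coordinate hyperplanes --- so that a smooth, hence quadratic, lower bound is available near it --- and keep track of the polynomial-in-$\lambda$ corrections carefully enough to land the claimed $\BigO(\lambda^{-1}\log\lambda)$ error. Everything else is bookkeeping, and the one-variable optimization in $r$ is what forces the constants into the closed form appearing in the statement.
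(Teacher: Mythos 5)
Your proposal is correct, and the bookkeeping checks out: the $\log\lambda$ terms cancel ($-1+\tfrac12+\tfrac12$), as do the $\log 2$'s and the constants, leaving exactly the claimed formula with an $\BigO(\lambda^{-1}\log\lambda)$ error; the reduction of the $q^w(x,D_x)$ statement to the weighted-space statement via Proposition \ref{pNormalForm} and Theorem \ref{tCharacterizeProj} (simple eigenvalue $\Rightarrow$ single term) is also exactly right. The route differs from the paper's in how the Laplace analysis is organized. The paper first passes to polar coordinates and evaluates the radial integral \emph{exactly} as a Gamma function, so that $\|x^\alpha\|^2_{H_\Phi}$ becomes $(|\alpha|+n-1)!$ times an integral over the compact sphere $\{|\omega|=1\}$ with amplitude $(4\Phi(\omega))^{-1}|\omega^\beta|^2$; Laplace's method is then applied only on the sphere (where the trivial upper bound $\mu(M)(\sup f)^\lambda$ is immediate), and Stirling is applied to the combined normalization $C(\lambda\beta;n)=2\pi^n\prod\Gamma(\lambda\beta_j+1)/\Gamma(\lambda+n)$, which the paper identifies by comparing against a self-adjoint harmonic oscillator with rationally independent frequencies. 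You instead rescale $x=\sqrt{\lambda}\,y$ and run a single $2n$-dimensional Laplace argument for $g_\Psi(y)=\sum\beta_j\log|y_j|-\Psi(y)$ on all of $\Bbb{C}^n$, recovering the spherical supremum only afterwards by optimizing in $r$. Your version buys a cleaner, self-contained derivation of the normalizing constants (no comparison operator needed), at the cost of having to verify what the paper gets for free from compactness: that $\sup g_\Psi$ is finite and attained away from both infinity and the coordinate hyperplanes $\{y_j=0\}$ with $\beta_j>0$, so that a quadratic lower bound near the maximizer is available. You address exactly this, and your upper bound via $2\lambda g_\Psi\le 2(\lambda-1)\sup g_\Psi+2g_\Psi$ with $\int e^{2g_\Psi}\,dL<\infty$ is a nice replacement for the compactness argument. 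The only imprecision is the remark that the $\beta_j=0$ coordinates ``factor out as Gaussian integrals'' --- $\Psi$ need not split as a sum over coordinates --- but this is harmless since your main argument (maximizer at an interior point of smoothness, quadratic lower bound in all $2n$ real variables) already handles those directions without any factorization.
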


\subsection{Plan of the paper}

Section \ref{sResolvent} is devoted to proving Theorem \ref{tExtendToPE} and recapitulating the necessary machinery used in \cite{HiSjVi2011}.  Also included are examples in Section \ref{ssExamples} and illustrations of partial ellipticity in Section \ref{ssNumerics}.  Section \ref{sCharProj} contains the proof of Theorem \ref{tCharacterizeProj} as well as an elementary exponential upper bound for spectral projections which is related to the work \cite{DaKu2004}.  Section \ref{sDualBases} focuses on the properties of dual bases for projection onto monomials in weighted spaces, and it contains proofs of Theorems \ref{tBoundProj} and \ref{tOrthogonalProj}.  Finally, Section \ref{sComputations} contains computations based on these results which prove Corollaries \ref{cUpperBound}, \ref{cDim1Asymptotic}, and \ref{cExponentialGrowth} and numerical computations based on Corollary \ref{cExponentialGrowth}.

\section{Resolvent bounds in the partially elliptic case}\label{sResolvent}

One may extend the upper bounds obtained in \cite{HiSjVi2011} for resolvents of elliptic quadratic operators to upper bounds for partially elliptic quadratic operators after two steps: duplicating the reduction to normal form and finding some replacement for an elliptic estimate.  The former can be done after demonstrating that the stable (linear Lagrangian) manifolds $\Lambda^\pm$ defined in (\ref{eLambdaDef}) are positive and negative Lagrangian planes as defined in (\ref{ePositiveLagrangianDef}), which follows more or less directly from reasoning in \cite{Sj1974}.  A subelliptic estimate, sufficient to establish Theorem \ref{tExtendToPE}, may be deduced from a remarkable recent result of Pravda-Starov \cite{PS2011}.

In this section, we will assume that our quadratic symbol
\[
	q:\Bbb{R}^{2n} \rightarrow \Bbb{C}
\]
is partially elliptic with trivial singular space as in (\ref{eRealSemidef}) and (\ref{eTrivialS}).

We begin by proving sign definiteness of $\Lambda^\pm(q)$.  Afterwards, we recall the reduction to normal form in \cite{HiSjVi2011} and remark on some additional information which may be derived from this reduction.  Following this, we present three examples which will be used throughout the rest of the paper.  Next, we prove the weak elliptic estimate for high-energy functions.  Finally, recalling the low-energy finite dimensional analysis of \cite{HiSjVi2011}, we are able to prove Theorem \ref{tExtendToPE}.

Afterwards, in Section \ref{ssNumerics}, we see some evidence that the elliptic estimate in Proposition \ref{pPartiallyEllipticEstimate} may not give a sharp rate of growth in Theorem \ref{tExtendToPE}.  However, the phenomenon of subellipticity formalized in \cite{PS2011} appears to be sharp, presenting a genuine obstacle in adapting the standard ellipticity argument found in Proposition \ref{pPartiallyEllipticEstimate}.

\subsection{Sign definiteness of $\Lambda^\pm(q)$}

To reproduce the reduction to normal form in \cite{HiSjVi2011}, one must have sign definiteness of $\Lambda^\pm(q)$ defined in (\ref{eLambdaDef}).  We include a direct proof here.

\begin{proposition}\label{pLambdaPositive}
Let $q:\Bbb{R}^{2n}\rightarrow \Bbb{C}$ be a quadratic form and $F = \frac{1}{2}H_q$ its Hamilton map defined in (\ref{eFDef}).  Assume that $q$ satisfies (\ref{eRealSemidef}) and (\ref{eTrivialS}), and recall the associated manifolds $\Lambda^\pm(q)$ defined in (\ref{eLambdaDef}).  Then $\Lambda^\pm(q)$ are positive and negative Lagrangian planes in $\Bbb{C}^{2n}$ as defined in (\ref{ePositiveLagrangianDef}).
\end{proposition}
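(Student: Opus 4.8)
The plan is to prove positivity of $\Lambda^+(q)$ by a limiting argument from the elliptic case, using the fact (recalled in the excerpt, following \cite{Sj1974}) that for generic $z$ on the unit circle the form $zq$ is elliptic, combined with the observation that the singular space hypothesis is preserved along a suitable family of perturbations. The statement for $\Lambda^-$ then follows by applying the result to $\overline{q}$, or equivalently by symmetry of the construction under $\jvIm \lambda \mapsto -\jvIm \lambda$.

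First I would record the elementary facts. If $q_\eps := q + \eps q_e$ for a fixed elliptic $q_e$ (say $q_e(x,\xi) = |(x,\xi)|^2$) and $\eps > 0$, then $\jvRe q_\eps \geq \eps |(x,\xi)|^2 > 0$ away from the origin, so $q_\eps$ is elliptic; hence by the classical theory its Hamilton map $F_\eps = F(q) + \eps F(q_e)$ has no real eigenvalues, exactly $n$ eigenvalues in each open half-plane, and the associated $\Lambda^+(q_\eps)$ is a positive Lagrangian plane in the strong sense of (\ref{ePositiveLagrangianDef}). The heart of the matter is to pass to the limit $\eps \to 0^+$: I need (i) that $F(q)$ itself has no real eigenvalues, so that $\Lambda^+(q)$ is well-defined as in (\ref{eLambdaDef}) and varies continuously (even holomorphically, via a Riesz projection $\frac{1}{2\pi i}\oint (\zeta - F_\eps)^{-1}\,d\zeta$ over a contour enclosing the upper-half-plane eigenvalues) as $\eps \to 0^+$; and (ii) that the limiting plane, which is a priori only a \emph{non-negative} Lagrangian plane, i.e. $-i\sigma(X,\overline X) \geq 0$ on $\Lambda^+(q)$, is in fact strictly positive. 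Step (i) is where the hypothesis (\ref{eTrivialS}) enters: under (\ref{eRealSemidef}) and (\ref{eTrivialS}) one has the spectral dichotomy (\ref{eEigenvalUHPLHP}) quoted in the excerpt, so $F(q)$ has no real eigenvalue and the contour integral makes sense; continuity of $\Lambda^+(q_\eps) \to \Lambda^+(q)$ is then standard perturbation theory since the enclosed eigenvalues stay bounded away from the contour.

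For step (ii), the non-strict inequality on $\Lambda^+(q)$ is inherited from the limit, and the Lagrangian property is a closed condition, so both survive. To upgrade to strict positivity, suppose $X \in \Lambda^+(q)\setminus\{0\}$ with $\sigma(X,\overline X) = 0$; I would decompose $X$ along the generalized eigenspaces of $F(q)$ and examine the consequences of $\jvRe q$ being a non-negative quadratic form on $\Bbb{R}^{2n}$ — extended to $\Bbb{C}^{2n}$, one has $\jvRe q(Y,\overline Y) = q(Y,\overline Y)/2 + \overline{q(Y,\overline Y)}/2$ controlling the obstruction, and one shows that $X$ degenerate would force the component of $X$ in the relevant eigenspaces to lie in $\bigcap_k \ker((\jvRe F)(\jvIm F)^k)$, i.e. in the singular space $S$; since $S = \{0\}$ by (\ref{eTrivialS}), this gives $X = 0$, a contradiction. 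This is essentially the argument at the end of Section 2 of \cite{HiPS2012}; I would reproduce it, taking care to track how the quadratic form $\jvRe q$ restricted to the invariant subspace $\Lambda^+(q)$ degenerates exactly along powers of $\jvIm F$ applied to vectors killed by $\jvRe F$.

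The main obstacle is step (ii) — the passage from the non-strict to the strict inequality. The limiting argument cheaply yields only $-i\sigma(X,\overline X) \geq 0$, and ruling out a nontrivial null vector requires genuinely using the full strength of the trivial-singular-space hypothesis rather than just (\ref{eEigenvalUHPLHP}); this is the step where I would need to be most careful about the linear algebra relating $\ker(\jvRe F)$, the iterates of $\jvIm F$, and the generalized eigenspaces $V_\lambda$ with $\jvIm\lambda > 0$.
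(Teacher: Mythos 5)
Your approach is viable and genuinely different in its deformation mechanism from the paper's. The paper does not perturb $q$ additively; it conjugates by a continuous family of complex linear canonical transformations $\kappa_\delta$ (with $\kappa_0 = 1$ and $q\circ\kappa_\delta$ elliptic for $\delta>0$), so that $\Lambda^\pm(q\circ\kappa_\delta) = \kappa_\delta^{-1}(\Lambda^\pm(q))$ is automatically a continuous family of Lagrangian planes, and then invokes the deformation lemma following Lemma 3.8 of \cite{Sj1974}: a continuous family of Lagrangian planes, each meeting $\Bbb{R}^{2n}$ only at $0$, has constant signature, so positivity for $\delta>0$ (from ellipticity) transfers to $\delta=0$. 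The single substantive input is therefore $\Lambda^+(q)\cap\Bbb{R}^{2n}=\{0\}$, proved exactly by the mechanism you anticipate: for real $X$, semidefiniteness (\ref{eRealSemidef}) gives $\jvRe q(X)=0\iff(\jvRe F)X=0$; a real $X\in\Lambda^+$ satisfies $q(X)=\sigma(X,FX)=0$ because $\Lambda^+$ is Lagrangian and $F$-invariant, hence $(\jvRe F)X=0$ and $(\jvIm F)X=-iFX$ is again a real vector of $\Lambda^+$; iterating places $X$ in $S=\{0\}$. Your route buys a more elementary continuity statement (Riesz projections for an additive perturbation, no canonical transformations or metaplectic machinery), but it leaves you with only the non-strict inequality, and your sketch of step (ii) is the part that needs tightening: the clean way to close it is not to decompose $X$ over generalized eigenspaces, but to note that if $-i\sigma(\cdot,\overline{\,\cdot\,})$ is positive semidefinite on the Lagrangian plane $\Lambda^+$ and $X\in\Lambda^+$ is a null vector, then Cauchy--Schwarz gives $\sigma(X,\overline{Y})=0$ for all $Y\in\Lambda^+$, i.e.\ $X\in(\overline{\Lambda^+})^{\sigma\perp}=\overline{\Lambda^+}$, so $\jvRe X$ and $\jvIm X$ are real vectors of $\Lambda^+$, not both zero. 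This reduces your step (ii) to exactly the statement $\Lambda^+(q)\cap\Bbb{R}^{2n}=\{0\}$ that the paper proves, and with that bridge inserted your argument is complete. (A minor point on step (i): you also need that no eigenvalue of $F(q_\eps)$ crosses the real axis as $\eps\to0^+$; this is fine because $F(q)$ has no real eigenvalues by (\ref{eEigenvalUHPLHP}), so the contour can be chosen uniformly for small $\eps$.)
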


\begin{proof}
We begin by noting, as in the proof of Lemma 3 in \cite{PS2008} or in Remark 2.2 of \cite{Vi2012b}, that (\ref{eRealSemidef}) implies that, whenever $X \in \Bbb{R}^{2n}$,
\begin{equation}\label{eReqReFSame}
	\jvRe q(X) = 0 \iff (\jvRe F) X = 0.
\end{equation}

We also recall that, when $q$ obeys (\ref{eRealSemidef}) and (\ref{eTrivialS}), there exists $\delta_0 > 0$ and a continuous family of complex linear canonical transformations $\{\kappa_\delta\}_{0 \leq \delta \leq \delta_0}$ acting on $\Bbb{C}^{2n}$ beginning with the identity, $\kappa_0 = 1_{\Bbb{C}^{2n}}$, and positive constants $\{C_\delta\}_{0 < \delta \leq \delta_0}$ for which
\[
	\jvRe (q\circ \kappa_\delta)(X) \geq \frac{1}{C_\delta}|X|^2, \quad \delta > 0.
\]
Details may be found in, for example, \cite{HiPS2010}, Section 2, or \cite{Vi2012b}, Section 2.1.  These canonical transformations induce a similarity transformation on the Hamilton map,
\[
	F(q\circ\kappa_\delta) = \kappa_\delta^{-1}F(q)\kappa_\delta,
\]
and so
\[
	\Lambda^\pm_\delta := \Lambda^\pm(q\circ\kappa_\delta),\quad 0 \leq \delta \leq \delta_0
\]
enjoy the relation
\[
	\Lambda^\pm_\delta = \kappa_\delta^{-1}(\Lambda^\pm_0).
\]
It immediately follows that $\{\Lambda^\pm_\delta\}_{0\leq \delta \leq \delta_0}$ is a continuous family of Lagrangian planes.  When $\delta > 0$, positivity of $\Lambda^+_\delta$ and negativity of $\Lambda^-_\delta$ follow from ellipticity of $q\circ\kappa_\delta$.  To apply a deformation argument in \cite{Sj1974}, we wish to show that $\Lambda^\pm_0\cap \Bbb{R}^{2n} = \{0\}$.

We know from Lemma 3.7 of \cite{Sj1974} that $\lambda \neq -\mu$ implies that $V_\lambda(q)$ and $V_\mu(q)$ are orthogonal with respect to $\sigma$, and therefore that $\Lambda^\pm_0$ are Lagrangian planes.  (This may also be seen by applying $\kappa_\delta$ to $\Lambda_\delta^\pm$.)  

Because generalized eigenspaces of an operator are invariant under that operator, we see that $X \in \Lambda^+_0$ implies that $FX \in \Lambda^+_0$.  Since $\Lambda_0^+$ is Lagrangian, we see that
\[
	X \in \Lambda_0^+ \implies q(X) = \sigma(X,FX) = 0.
\]
If we assume furthermore that $X \in \Lambda^+_0 \cap \Bbb{R}^{2n}$, we have that $\jvRe q(X) = 0$ and so $(\jvRe F)X = 0$ as well.  But then 
\[
	-iFX = (\jvIm F)X \in \Lambda^+_0 \cap \Bbb{R}^{2n}.
\]
By induction we therefore see that, whenever $X \in \Lambda^+_0 \cap \Bbb{R}^{2n}$, we have that
\[
	(\jvIm F)^k X \in \Lambda^+_0\cap\Bbb{R}^{2n}, \quad k = 0,\dots,2n-1.
\]
We have already seen that $\ker \jvRe F$ contains $\Lambda^+_0 \cap \Bbb{R}^{2n}$, and so we conclude that, whenever $X \in \Lambda_0^+ \cap \Bbb{R}^{2n}$, we have $X \in S = \{0\}$.

We may then appeal to the deformation argument following Lemma 3.8 in \cite{Sj1974}, which shows that if $\{W_\delta\}_{0 \leq \delta \leq \delta_0}$ is a continuous family of Lagrangian planes for which $W_\delta \cap \Bbb{R}^{2n} = \{0\}$, then all the $W_\delta$ are positive so long as one is.  Since $\Lambda^+_\delta$ is positive for $\delta \in (0,\delta_0]$, we know that $\Lambda^+_0 = \Lambda^+(q)$ is positive.  The same reasoning provides that $\Lambda^-(q)$ is a negative Lagrangian plane, completing the proof.
\end{proof}

\subsection{Review of reduction to normal form}\label{ssNormalForm}

Having established sign definiteness of $\Lambda^\pm$, a reduction to normal form may then proceed exactly as in Section 2 of \cite{HiSjVi2011}.  We state the result as a proposition, following Proposition 2.1 in that work, and review the proof to record some minor details.  We then make some minor remarks providing further information which will be used in the sequel.  The relevant symbol classes are 
\[
	S(\Bbb{R}^{2n},\langle(x,\xi)\rangle^m) := \{p \in C^\infty(\Bbb{R}^{2n})\::\: |\partial^\alpha_{x,\xi}p(x,\xi)|\leq \BigO_{\alpha\beta}(\langle(x,\xi)\rangle^m)\}.
\]
However, as mentioned in Section \ref{ssBackground}, in this work we only require the use of symbols which are polynomials in $(x,\xi)$.

\begin{proposition}\label{pNormalForm}
Let $q(x,\xi):\Bbb{R}^{2n}\rightarrow \Bbb{C}$ be quadratic and partially elliptic in the sense of (\ref{eRealSemidef}) and (\ref{eTrivialS}).  Then there exists a complex linear canonical transformation $\kap$ for which
\[
	\tilde{q}(x,\xi) := (q\circ\kap^{-1})(x,\xi) = (Mx)\cdot\xi
\]
for $M \in \Bbb{C}^{n\times n}$ block-diagonal with each block being a Jordan one.  Furthermore, the eigenvalues of $M$ are precisely those of $2F$ in the upper half-plane.  Associated with the transformation $\kap$ are a real-valued quadratic strictly convex weight function $\Phi_2:\Bbb{C}^n\rightarrow \Bbb{R}$ and a unitary operator
\[
	\mathcal{T}:L^2(\Bbb{R}^n)\rightarrow H_{\Phi_2}(\Bbb{C}^n)
\]
quantizing $\kap$ in that
\begin{equation}\label{eEgorov}
	\mathcal{T}p^w(x,hD_x)\mathcal{T}^* = (p\circ\kap^{-1})^w(x,hD_x), \quad \forall p \in S(\Bbb{R}^{2n},\langle (x,\xi)\rangle^m).
\end{equation}
\end{proposition}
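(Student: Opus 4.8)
The plan is to derive the statement from Proposition \ref{pLambdaPositive} together with the classical FBI--Bargmann reduction of quadratic operators, following \cite{Sj1974} in the elliptic case and Section 2 of \cite{HiSjVi2011} in general; once sign-definiteness of $\Lambda^{\pm}(q)$ is in hand, the reduction is insensitive to the distinction between ellipticity and partial ellipticity.

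First I would assemble the linear-symplectic input. By Proposition \ref{pLambdaPositive}, $\Lambda^{+} = \Lambda^{+}(q)$ is a positive and $\Lambda^{-} = \Lambda^{-}(q)$ a negative Lagrangian plane, so both are transversal to each other and to $\Bbb{R}^{2n}$: a nonzero vector $X$ lying in the intersection of $\Lambda^{+}$ with $\Lambda^{-}$, or of $\Lambda^{\pm}$ with $\Bbb{R}^{2n}$, would make $-i\sigma(X,\overline X)$ both nonzero of a prescribed sign and equal to zero. In particular $\Bbb{C}^{2n} = \Lambda^{+}\oplus\Lambda^{-}$. Since any two pairs of transversal Lagrangian planes in $\Bbb{C}^{2n}$ are related by a complex linear symplectomorphism, I can choose $\kap$ with $\kap(\Lambda^{+}(q)) = \{\xi = 0\}$ and $\kap(\Lambda^{-}(q)) = \{x = 0\}$, the orientation of this choice being dictated below. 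Put $\tilde q = q\circ\kap^{-1}$; then $F(\tilde q) = \kap F(q)\kap^{-1}$ transports generalized eigenspaces covariantly, so $F(\tilde q)$ leaves both coordinate Lagrangians invariant and is therefore block diagonal, say acting by $G_1$ on $\{\xi = 0\}$ and by $G_2$ on $\{x = 0\}$. Antisymmetry of $F(\tilde q)$ with respect to $\sigma$ forces $G_2 = -\,{}^t G_1$, and a short computation with the identity $\tilde q(X) = \sigma(X, F(\tilde q)X)$ and the paper's convention for $\sigma$ then gives $\tilde q(x,\xi) = (Mx)\cdot\xi$ with $M = 2G_1$. Since $\{\xi = 0\} = \Lambda^{+}(\tilde q)$ is the generalized eigenspace of $F(\tilde q)$ for its upper-half-plane eigenvalues, those eigenvalues are exactly the eigenvalues of $G_1 = \tfrac12 M$, so $\opnm{Spec}M = 2\left(\opnm{Spec}F\cap\{\jvIm\lambda > 0\}\right)$; this is what fixes the orientation chosen above. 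Finally, composing $\kap$ with a symplectomorphism of the form $(x,\xi)\mapsto(Sx,\,{}^t S^{-1}\xi)$ conjugates $M$ by $S$ without disturbing the shape $(Mx)\cdot\xi$, so $S$ may be chosen to bring $M$ into block-diagonal Jordan form.

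Next I would manufacture the quantizing unitary. The strict positivity of $\Lambda^{+}$ (again Proposition \ref{pLambdaPositive}) is precisely the hypothesis under which $\kap$ may be further adjusted, by composition with a real linear symplectomorphism of $\Bbb{R}^{2n}$ (which is quantized by a metaplectic operator on $L^2(\Bbb{R}^n)$), so that the image of the real phase space is the graph
\[
	\kap(\Bbb{R}^{2n}) = \left\{ \left( x,\ \tfrac{2}{i}\partial_x\Phi_2(x)\right)\::\: x \in \Bbb{C}^n\right\}
\]
of the differential of a real-valued, quadratic, strictly convex $\Phi_2:\Bbb{C}^n\rightarrow\Bbb{R}$. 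The generalized Bargmann transform attached to such a $\kap$ is then a unitary $\mathcal{T}:L^2(\Bbb{R}^n)\rightarrow H_{\Phi_2}(\Bbb{C}^n)$, and for symbols that are polynomials of degree at most two --- in particular for quadratic $p$ --- it intertwines the Weyl quantizations exactly as in (\ref{eEgorov}); this is the metaplectic/FBI correspondence developed in \cite{Sj1974}, \cite{Sj1996}, Chapter 21 of \cite{HoALPDO3}, and \cite{MaBook}, carried out in the present normalization in Section 2 of \cite{HiSjVi2011}. Neither $\kap$ nor $\mathcal{T}$ depends on the symbol, so (\ref{eEgorov}) holds simultaneously for all admissible $p$. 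The ``minor details'' worth recording in the proof are then the explicit dictionary between $\Phi_2$ and $\kap$ (used afterwards in Remark \ref{rGCFromConvexity} and (\ref{ePhi2daggerMatrix})), the strict convexity of $\Phi_2$, and the identity $\opnm{Spec}M = 2\left(\opnm{Spec}F\cap\{\jvIm\lambda > 0\}\right)$, which is what ties this reduction to the spectrum formula (\ref{eSpectrum}).

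The genuinely substantive point, as opposed to the routine linear-symplectic bookkeeping for the block structure and the Jordan normalization, is the construction of $\mathcal{T}$: converting the strict positivity of $\Lambda^{+}$ into the existence of a strictly convex quadratic weight $\Phi_2$ and a unitary realizing (\ref{eEgorov}) \emph{exactly}. This rests on the structure theory of positive Lagrangian subspaces of $\Bbb{C}^{2n}$ and on the fact that, for symbols which are polynomials of degree at most two, the Egorov relation for FBI--Bargmann transforms is exact rather than merely valid modulo lower-order terms. Once this is granted, every remaining assertion of the proposition follows from the elementary computations sketched above.
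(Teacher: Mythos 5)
Your proposal follows essentially the same route as the paper: Proposition \ref{pLambdaPositive} supplies sign-definiteness of $\Lambda^{\pm}$, and the rest is the reduction of Section 2 of \cite{HiSjVi2011}, which the paper simply reproduces with explicit formulas. Your derivation of the normal form $(Mx)\cdot\xi$ from invariance of the two coordinate Lagrangians together with the $\sigma$-antisymmetry of $F(\tilde q)$ is a clean structural substitute for the paper's route (where the form $(M_1x)\cdot\xi$ emerges from the explicit composition $\kap = \kappa_G\circ\kap_{A_+}\circ\kappa$), and your eigenvalue identification is the same as the paper's. One step is misstated, though not fatally so: composing $\kap$ on the right with a real linear symplectomorphism leaves $\kap(\Bbb{R}^{2n})$ unchanged, so it cannot be used to \emph{arrange} that $\kap(\Bbb{R}^{2n})=\Lambda_{\Phi_2}$ with $\Phi_2$ strictly convex. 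In the paper the real factor $\kappa$ of (\ref{ekappaDef}) serves a different purpose --- it straightens $\Lambda^-$ to $\{(y,-iy)\}$ so that the remaining transformation is an FBI--Bargmann transform of the standard quantizable form --- and strict convexity of the resulting weight is then \emph{verified}, via the identity (\ref{e1MinusAbsCplus}) relating $1-C_+^*C_+$ to $\jvIm A_+$; this uses the negativity of $\Lambda^-$ (through the normalization of $\kappa(\Lambda^-)$) as well as the positivity of $\Lambda^+$ (through $\jvIm A_+>0$), so positivity of $\Lambda^+$ alone is not quite "precisely the hypothesis." Finally, note that the paper deliberately records the explicit matrices $G$ and $C_+$ and the formula (\ref{ePhi2Matrix}) for $\Phi_2$ because they are consumed verbatim in Remark \ref{rGCFromConvexity}, Proposition \ref{pReversibility}, and Theorems \ref{tBoundProj} and \ref{tOrthogonalProj}; a purely existential construction of $\mathcal{T}$ and $\Phi_2$ would prove this proposition but would not support the later sections.
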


\begin{proof}
We repeat the proof of Proposition 2.1 in \cite{HiSjVi2011} solely to make certain small details and minor changes of notation explicit.  There are three pieces in the reduction to normal form: quantizing a real canonical transformation straightening $\Lambda^-$, an FBI-Bargmann transform reducing $q$ to a polynomial simultaneously homogeneous of degree 1 in $x$ and of degree 1 in $\xi$, and a change of variables reducing the matrix in the resulting symbol to Jordan normal form.

That $\Lambda^- = \Lambda^-(q)$ is a negative Lagrangian plane is equivalent to having
\[
	\Lambda^- = \{(y,A_-y)\::\:y\in\Bbb{C}^n\}
\]
for some
\[
	A_- \in \Bbb{C}^{n\times n}, \quad A_-^t = A_-, \quad -\jvIm A_- > 0
\]
with the last in the sense of positive definite matrices.  A real linear canonical transformation such as
\begin{equation}\label{ekappaDef}
	\kappa = \left(\begin{array}{cc} (-\jvIm A_-)^{1/2} & 0 \\ -(-\jvIm A_-)^{-1/2}\jvRe A_- & (-\jvIm A_-)^{-1/2}\end{array}\right)
\end{equation}
gives $\kappa(\Lambda^-) = \{(y,-iy)\}$. We have that $\kappa$ may be quantized by a standard unitary operator on $L^2(\Bbb{R}^n)$, which reduces $\Lambda^{\pm}(q)$ to $\kappa(\Lambda^\pm(q))$ accordingly.

Since $\kappa$ is real canonical, $\kappa(\Lambda^+)$ remains positive, and so $\kappa(\Lambda^+) = \{(y,A_+y)\}$ for some symmetric $A_+ \in \Bbb{C}^{n\times n}$ with positive definite imaginary part.  Straightening 
\[
	\kappa(\Lambda^+)\mapsto \{(0,\xi)\}_{\xi\in\Bbb{C}^n}
\]
while simultaneously straightening 
\[
	\kappa(\Lambda^-)\mapsto \{(x,0)\}_{x\in\Bbb{C}^n}
\]
is accomplished by an FBI-Bargmann transform
\begin{equation}\label{eFBIT}
	\mathcal{T}_{A_+}u(x) = C_{A_+}h^{-3n/4} \int_{\Bbb{R}^n} e^{\frac{i}{h}\varphi_{A_+}(x,y)}u(y)\,dy
\end{equation}
for
\[
	\varphi_{A_+}(x,y) = \frac{i}{2}(x-y)^2 -\frac{1}{2}(x, (1-iA_+)^{-1}A_+ x).
\]
This FBI-Bargmann transform quantizes the canonical transformation, in block matrix form,
\begin{equation}\label{ekapADef}
	\kap_{A_+} = \left(\begin{array}{cc} 1 & -i \\ -(1-iA_+)^{-1}A_+ & (1-iA_+)^{-1}\end{array}\right).
\end{equation}

From \cite{HiSjVi2011}, we have that the range of the FBI-Bargmann transform (\ref{eFBIT}) is $H_{\Phi_1}(\Bbb{C}^n)$, where
\[
	\Phi_1(x) = \frac{1}{2}\left((\jvIm x)^2 + \jvIm(x,Bx)\right)
\]
for
\[
	B = (1-iA_+)^{-1}A_+.
\]
We rearrange $\Phi_1$ as follows:
\begin{multline*}
	\Phi_1(x) = -\frac{1}{8}(x-\bar{x})^2 + \frac{1}{4i}\left((x,Bx) - (\bar{x}, \overline{B}\bar{x})\right)
	\\ = \frac{1}{4}(x,\bar{x}) - \frac{1}{8}(x, (1+2iB)x) - \frac{1}{8}(\bar{x}, (1-2i\overline{B})\bar{x})
	\\ = \frac{1}{4}(|x|^2 - \jvRe(x,(1+2iB)x)).
\end{multline*}
We will use the expression
\begin{equation}\label{ePhi1Def}
	\Phi_1(x) = \frac{1}{4}\left(|x|^2 - \jvRe (x,C_+x)\right)
\end{equation}
with
\begin{equation}\label{eCplusDef}
	C_+ = 1+2iB = (1-iA_+)^{-1}(1+iA_+).
\end{equation}

We may see that $\Phi_1$ is strictly convex through the following useful computation, recalling that $A_+$ is symmetric:
\begin{multline}\label{e1MinusAbsCplus}
	1-C_+^*C_+ = 1-(1+iA_+^*)^{-1}(1-iA_+^*)(1+iA_+)(1-iA_+)^{-1}
	\\ = (1+iA_+^*)^{-1}\left( (1+iA_+^*)(1-iA_+) - (1-iA_+^*)(1+iA_+)\right)(1-iA_+)^{-1}
	\\ = (1+iA_+^*)^{-1}(4\jvIm A_+)(1-iA_+)^{-1}.
\end{multline}
We then see through a change of variables that positive definiteness of $\jvIm A_+$ is equivalent to positive definiteness of $1-C_+^*C_+$.  We therefore have that $|C_+x| < |x|$ for all $x \in \Bbb{C}^n\backslash \{0\}$.  Then, by the Cauchy-Schwarz inequality, when $x \neq 0$ we have
\[
	\Phi_1(x) \geq \frac{1}{4}(|x|^2 - |x|\:|C_+ x|) > 0,
\]
establishing strict convexity of $\Phi_1$.

The canonical transformation (\ref{ekapADef}) relates symbols $p:\Bbb{R}^{2n}\rightarrow \Bbb{C}$ with FBI-side symbols $(p\circ\kap_{A_+}^{-1}):\Lambda_{\Phi_1}\rightarrow \Bbb{C}$, with
\begin{equation}\label{eLambdaPhiDef}
	\Lambda_{\Phi_1} = \kap_{A_+}(\Bbb{R}^{2n}) = \left\{\left(x,\frac{2}{i}\partial_x \Phi_1(x)\right)\::\: x\in \Bbb{C}^n\right\}
\end{equation}
where derivatives here are holomorphic.  After conjugation with the FBI-Bargmann transform (\ref{eFBIT}), we have reduced $q$ to $(M_1x)\cdot\xi$, where $M_1$ is not necessarily in Jordan normal form.

Finally, for some invertible $G \in \Bbb{C}^{n\times n}$ chosen so that $G^{-1}M_1G$ is in Jordan normal form, we use a final linear change of variables 
\begin{equation}\label{eChangeOfVarsFinal}
	H_{\Phi_1} \ni u(x)\mapsto |\det G|u(Gx) \in H_{\Phi_2}
\end{equation}
quantizing the canonical transformation
\begin{equation}\label{ekappaGDef}
	\kappa_{G} : \Bbb{C}^{2n}\ni (x,\xi) \mapsto (G^{-1}x, G^t \xi) \in \Bbb{C}^{2n}.
\end{equation}
The resulting weight is $\Phi_2(x) = \Phi_1(Gx)$, which is strictly convex since $\Phi_1(x)$ is.

We note that a real-valued quadratic form $\Phi$ is uniquely determined by the two matrices $\Phi''_{xx}$ and $\Phi''_{\bar{x}x}$, since in this case
\[
	\Phi(x) = (\Phi''_{\bar{x}x}x,\bar{x}) + \jvRe (\Phi''_{xx}x,x).
\]
(See Section \ref{ssReversibility} for more details.)  We therefore only need to record that
\begin{equation}\label{ePhi2Matrix}
	(\Phi_2)''_{\bar{x}x} = \frac{1}{4} G^*G, \quad (\Phi_2)''_{xx} = -\frac{1}{4}G^tC_+G.
\end{equation}

The associated canonical transformation, using (\ref{ekappaDef}), (\ref{ekapADef}), and (\ref{ekappaGDef}), is
\[
	\kap = \kappa_G \circ \kap_A \circ \kappa.
\]
As in \cite{HiSjVi2011}, we note that $\kap|_{\Lambda^+}$ is an isomorphism
\[
	\kap:\Lambda^+\rightarrow \{\xi = 0\} = \{(x,0)\::\: x \in \Bbb{C}^n\}.
\]
Since
\[
	F(\tilde{q}) = \frac{1}{2}\left(\begin{array}{cc} M & 0 \\ 0 & -M^t\end{array}\right)
\]
and $F(\tilde{q}) = \kap F(q) \kap^{-1}$, we see that $M$ acting on $\Bbb{C}^n$ and $2F(q)$ acting on $\Lambda^+$ are similar linear operators and therefore isospectral.  By the definition (\ref{eLambdaDef}) of $\Lambda^+$, we then have
\[
	\opnm{Spec} M = \opnm{Spec} (2F(q)|_{\Lambda^+}) = (\opnm{Spec} 2F(q)) \cap \{\jvIm \lambda > 0\}.
\]

We furthermore remark that the change of variables (\ref{eChangeOfVarsFinal}) is a degree-preserving isomorphism on polynomials.  For this reason, it will sometimes be simpler to work on $H_{\Phi_1}$ instead of $H_{\Phi_2}$.
\end{proof}

\begin{remark}\label{rqtilde}
From Section 4 of \cite{HiSjVi2011} we record the specific formula
\[
	\tilde{q}^w(x,hD_x) = \tilde{q}_D^w(x,hD_x) + \tilde{q}^w_N(x,hD_x)
\]
with
\[
	\tilde{q}^w_D(x,hD_x) = \sum_{j=1}^n 2\lambda_j x_j hD_{x_j} + \frac{h}{i}\sum_{j=1}^n \lambda_j
\]
and
\[
	\tilde{q}^w_N(x,hD_x) = \sum_{j=1}^{n-1} \gamma_j x_{j+1}hD_{x_j}, \quad \gamma_j\in\{0,1\}.
\]
As usual, the $\lambda_j$ are the eigenvalues of $F(q)$ for which $\jvIm \lambda_j > 0$.  We furthermore remark that it is clear from the fact that $M$ is in Jordan normal form that $\gamma_j = 0$ when $\lambda_j \neq \lambda_{j+1}$.
\end{remark}

\begin{remark}\label{rGaussianInvariance}
In order to see how complex Gaussians
\begin{equation}\label{eGaussian}
	u_0(x) = \jvexp\left(\frac{i}{2h}(x,Fx)\right), \quad F \in \Bbb{C}^{n\times n}, \quad F^t = F
\end{equation}
transform under $\mathcal{T}$, or under any unitary transformation quantizing a complex linear canonical transformation $\kap$, it suffices to note that such a Gaussian may be uniquely identified (up to a constant factor) as an ODE solution of the equation
\[
	\ell^w(x,hD_x) u = 0, \quad \ell(x,\xi) = Fx-\xi.
\]
Therefore $\mathcal{T}u_0$ must satisfy a similar equation with $\tilde{\ell} = \ell \circ \kap^{-1}$; writing
\[
	\kap^{-1}(x,\xi) = (Ax+B\xi, Cx+D\xi),
\]
we have that
\[
	\tilde{\ell}(x,\xi) = (FA - C)x - (D - FB)\xi.
\]
When $(D-FB)^{-1}$ exists, we see that $\mathcal{T}u_0$ must also be a complex Gaussian 
\[
	\mathcal{T}u_0(x) = C\jvexp\left(\frac{i}{2h}(x,\tilde{F}x)\right)
\]
with a new 
\[
	\tilde{F} = (D-FB)^{-1}(FA-C).
\]
Symmetry of $\tilde{F}$, recalling that $F$ is symmetric, may be checked by noting that
\begin{multline*}
	(D-FB)(\tilde{F} - \tilde{F}^t)(D^t - B^tF) 
	\\ = F(BA^t - AB^t)F + F(AD^t - BC^t) + (CB^t - DA^t)F + (DC^t - CD^t).
\end{multline*}
That 
\[
	(\kap^{-1})^t \textnormal{ is canonical} \iff \left\{\begin{array}{l} AB^t - BA^t = 0 \\ AD^t - BC^t = 1 \\  CD^t - DC^t = 0 \end{array}\right.
\]
follows as usual from the equivalent statment
\[
	\kap^{-1} J (\kap^{-1})^t = J, \quad J = \left(\begin{array}{cc} 0 & -1 \\ 1 & 0\end{array}\right).
\]
This is seen to be equivalent to having $\kap$ canonical by taking inverses of both sides of $\kap^{-1} J (\kap^{-1})^t = J$, which completes the proof of symmetry of $\tilde{F}$.

The case $F = 0$, where $u_0$ is constant, will play an important role in the sequel as the ground state of $\tilde{q}^w(x,hD_x)$.  

The case where $D-FB$ above is not invertible should then degenerate into having $u_0$ behave as a delta function in certain directions, as may be seen by taking the Fourier transform as an example, but we will not encounter that situation here.  This fact may be deduced from the fact that the unitary transformation quantizing the real canonical transformation $\kappa$ in (\ref{ekappaDef}) preserves $L^2(\Bbb{R}^n)$ functions and therefore preserves the class of Gaussians given by $\{F\::\: F^t = F, \jvIm F > 0\}$ and from the fact that the FBI transform takes these Gaussians to entire functions on $\Bbb{C}^n$, precluding the delta function situation.

For completeness, particularly for the application to Hermite functions in Section \ref{ssSubelliptic}, we explicitly compute the matrix $DF-B$ for the transformations which constitute $\kap$ in Proposition (\ref{pNormalForm}). We then can see that $DF-B$ obtained from $\kap$ will be invertible whenever $\jvIm F > 0$ in the sense of positive definite matrices.  From (\ref{ekappaDef}) we have
\[
	\kappa^{-1} = \left(\begin{array}{cc} (-\jvIm A_-)^{-1/2} & 0 \\ (\jvRe A_-)(-\jvIm A_-)^{-1/2} & (-\jvIm A_-)^{1/2}\end{array}\right),
\]
meaning that $B = 0$ and so $D-FB = (-\jvIm A_-)^{1/2}$ which is always invertible.  Furthermore, for $\kappa$, we have
\[
	\tilde{F} = (-\jvIm A_-)^{-1/2}(F -\jvRe A_-)(-\jvIm A_-)^{-1/2},
\]
and since $(-\jvIm A_-)^{-1/2}$ is a real positive definite matrix, we see that $\jvIm F > 0$ if and only if $\jvIm \tilde{F} > 0$.  Next, from (\ref{ekapADef}), we see that
\[
	\kap_{A_+}^{-1} = \left(\begin{array}{cc} (1-iA_+)^{-1} & i \\ A_+(1-iA_+)^{-1} & 1\end{array}\right),
\]
and so $D-FB = 1-iF$ which is certainly invertible if $\jvIm F > 0$.  (One can furthermore easily check that $\jvIm F > 0$ means that the resulting $\tilde{F} = 0$ if and only if $\tilde{F} = A_+$.)  Finally, the transformation $\kappa_G^{-1}$ given by (\ref{ekappaGDef}) obviously has $B = 0$ and $D = G^{-t}$ which is always invertible; naturally, the formula for $\tilde{F}$ here may be more easily obtained from the associated change of variables.

We are therefore assured that $\mathcal{T}$ from Proposition \ref{pNormalForm} always carries a Gaussian given by (\ref{eGaussian}) to another Gaussian.

We make a final note that 
\[
	x^\alpha \jvexp(i(x,Fx)/2h)\stackrel{\mathcal{T}}\mapsto (k^w(x,hD_x))^\alpha \jvexp(i(x,\tilde{F}x)/2h)
\]
for some linear $k(x,\xi):\Bbb{R}^{2n}\rightarrow \Bbb{C}^n$.  As a consequence, if $p(x)$ is a polynomial of degree $N$,
\[
	p(x)\jvexp\left(\frac{i}{2h}(x,Fx)\right) \stackrel{\mathcal{T}}\mapsto \tilde{p}(x)\jvexp\left(\frac{i}{2h}(x,\tilde{F}x)\right)
\]
with $\tilde{p}$ a polynomial of degree less than or equal to $N$.  Because this procedure may reversed, we see that $\opnm{deg}\tilde{p} = N$.
\end{remark}

\begin{remark}\label{rPolynomialsDense}
In our first application of the proof of Proposition \ref{pNormalForm} and Remark \ref{rGaussianInvariance}, we can now easily show that the set $\Bbb{C}[x_1,\dots,x_n]$ of polynomials in $n$ variables is dense in $H_{\Phi_2}(\Bbb{C}^n;h)$.  Since the invertible linear change of variables (\ref{eChangeOfVarsFinal}) induces an isomorphism on the space of polynomials, it suffices to show that polynomials are dense in $H_{\Phi_1}$.  As discussed in Remark \ref{rGaussianInvariance}, the constant functions are uniquely determined by the equation
\[
	\ell^w(x,hD_x)u = 0, \quad \ell(x,\xi) = \xi.
\]
Inverting the FBI-Bargmann transform quantizing $\kap_{A_+}$ provides a unitary map
\[
	H_{\Phi_1}(\Bbb{C}^n;h)\supseteq \Bbb{C}[x_1,\dots,x_n] \rightarrow \Bbb{C}[x_1,\dots,x_n]u_0 \subseteq L^2(\Bbb{R}^n),
\]
where the image of $1 \in H_{\Phi_1}(\Bbb{C}^n;h)$, denoted $u_0 \in L^2(\Bbb{R}^n)$, is uniquely determined up to constants by the equation
\[
	(\ell \circ \kap_{A_+})^w(x,hD_x)u_0 = 0.
\]
We compute that
\[
	\ell\circ\kap_{A_+} = (1-iA_+)^{-1}(-A_+x + \xi),
\]
so (up to a constant factor)
\[
	u_0(x) = \jvexp\left(\frac{i}{2h}(x,A_+x)\right).
\]
Density of $\Bbb{C}[x_1,\dots,x_n]u_0$ in $L^2(\Bbb{R}^n)$ is established in Lemma 3.12 of \cite{Sj1974}.  Density of $\Bbb{C}[x_1,\dots,x_n]$ in $H_{\Phi_1}(\Bbb{C}^n;h)$ follows by the unitary equivalence, completing the proof.

In view of Proposition \ref{pReversibility}, we see that polynomials are dense in $H_\Phi(\Bbb{C}^n;h)$ for any real-valued quadratic strictly convex $\Phi$.
\end{remark}

\subsection{Examples}\label{ssExamples}

We begin by describing the reduction to normal form in three examples: the rotated harmonic oscillator studied in \cite{DaKu2004}, a Kramers-Fokker-Planck operator with quadratic potential like those studied in \cite{HeSjSt2005} (among many other works), and small perturbations of an operator for which $F(q)$ has Jordan blocks, studied in \cite{HiSjVi2011}.  The rotated harmonic oscillator is a model operator in dimension 1 against which Corollary \ref{cDim1Asymptotic} may be checked. Next, the Kramers-Fokker-Planck operator is of physical interest, is partially elliptic but not elliptic, and may be analyzed via an orthogonal decomposition in view of Theorem \ref{tOrthogonalProj}.  Finally, the perturbations of the operator with a Jordan block admit a similar decomposition, and the unperturbed operator has rapid resolvent growth while having orthogonal spectral projections with large ranges.

The reader who wishes to continue with the proof of Theorem \ref{tExtendToPE} is invited to skip this section.

\begin{example}\label{exDavies1} We first consider the operator on $L^2(\Bbb{R}^1)$ given by
\begin{equation}\label{eDavies}
	Q_1(h) = -e^{-2i\theta}h^2\frac{d^2}{dx^2} + e^{2i\theta} x^2
\end{equation}
with symbol
\begin{equation}\label{eDaviesSymbol}
	q_1(x,\xi) = e^{-2i\theta}\xi^2 + e^{2i\theta}x^2.
\end{equation}
The symbol is elliptic for $\theta \in (-\pi/4, \pi/4)$.  In \cite{DaKu2004}, exact rates of the exponential growth for the spectral projections associated with this operator were computed, and we will compare the results in this paper to those previously known results.

We begin with the observation that
\[
	F = \left(\begin{array}{cc} 0 & e^{-2i\theta} \\ -e^{2i\theta} & 0\end{array}\right),
\]
with eigenvalues $\pm i$ and eigenspaces
\[
	\Lambda^{\pm} = \{(y,\pm ie^{2i\theta}y)\::\:y\in\Bbb{C}^n\}.
\]

We have then that $\Lambda^- = \{(y, A_- y)\}_{y\in\Bbb{C}^n}$ for $A_- = -ie^{2i\theta}$, and so following (\ref{ekappaDef}) gives
\[
	\kappa = \left(\begin{array}{cc} (\cos 2\theta)^{1/2} & 0 \\ -(\cos 2\theta)^{-1/2}(\sin 2\theta) & (\cos 2\theta)^{-1/2}\end{array}\right).
\]
As a consequence,
\[
	\kappa(\Lambda^-) = \{(y,-iy)\}_{y\in\Bbb{C}^n}, \quad \kappa(\Lambda^+) = \{(y, (i-2\tan 2\theta)y)\}_{y\in\Bbb{C}^n}.
\]
Therefore we can compute from
\[
	A_+ = i-2\tan 2\theta
\]
that
\[
	C_+ = (1-iA_+)^{-1}(1+iA_+) = \frac{1}{2}(-1+e^{-4i\theta}).
\]
In one dimension, there is no need for an FBI-side change of variables reducing to Jordan normal form, so it suffices to use
\[
	\Phi_1(x) = \frac{1}{4}(|x|^2 - \jvRe(x,C_+x)).
\]
We conclude that $Q_1(h)$ from (\ref{eDavies}) is unitarily equivalent to
\[
	\tilde{Q}_1(h) = 2i xhD_x + h
\]
acting on $H_{\Phi_1}(\Bbb{C}^1)$.  We note that the symbol of $\tilde{Q}_1(h)$ is $\theta$-independent.
\end{example}

\begin{example}\label{exKFP1}
We next consider a specific semiclassical Kramers-Fokker-Planck operator with quadratic potential
\begin{equation}\label{eKFPOperator}
	Q_2(h) = \frac{1}{2}(v^2 - h^2\partial_v^2) + hv\partial_x - \frac{1}{2}hx\partial_v
\end{equation}
acting on $L^2(\Bbb{R}^2)$, for simplicity.  The classical derivation from \cite{Kr1940} may be found in, for instance, Section 13 of \cite{HeSjSt2005}.  We have chosen $\gamma = 1$ and $V(x) = x^2/4$.  The corresponding symbol is
\[
	q_2(x,v,\xi,\eta) = \frac{1}{2}(v^2 + \eta^2) + i(v\xi - \frac{1}{2}x\eta).
\]
In this situation we have
\[
	F_2 = \left(\begin{array}{cccc} 0 & i/2 & 0 & 0 \\ -i/4 & 0 & 0 & 1/2 \\ 0 & 0 & 0 & i/4 \\ 0 & -1/2 & -i/2 & 0\end{array}\right).
\]

We may then check that the eigenvalues of $F_2$ are given by
\[
	\opnm{Spec}(F_2) = \left\{\pm \frac{1}{4} \pm \frac{i}{4}\right\},
\]
with eigenvectors determined by
\[
	\ker(F_2-\lambda) = \opnm{span}v_\lambda, \quad v_\lambda := \left(1,\frac{2\lambda}{i},\frac{i}{4\lambda}\left(\frac{4\lambda^2}{i}+\frac{i}{2}\right),\frac{4\lambda^2}{i}+\frac{i}{2}\right).
\]
We can easily determine $A_-$ by writing
\[
	\left(\begin{array}{ccc} - & v_{1/4-i/4} & \rightarrow \\ - & v_{-1/4-i/4} & \rightarrow \end{array}\right) = \left(\begin{array}{cc}B_1 & B_2\end{array}\right)
\]
for $B_1, B_2 \in \Bbb{C}^{2\times 2}$.  Then 
\[
	A_- = B_1^{-1}B_2 = \left(\begin{array}{cc} -i/2 & 0 \\ 0 & -i\end{array}\right).
\]
The same procedure applied to eigenvalues with positive imaginary part shows that $\Lambda^+(q_2) = \overline{\Lambda^-(q_2)}$, which is condition \ref{iOrthogbar} in Theorem \ref{tOrthogonalProj}.

We therefore take
\[
	\kappa = \left(\begin{array}{cccc} 1/\sqrt{2} & 0 & 0 & 0 \\ 0 & 1 & 0 & 0 \\ 0 & 0 & \sqrt{2} & 0 \\ 0 & 0 & 0 & 1\end{array}\right)
\]
to obtain
\[
	q_{2,R}(x,v,\xi,\eta) := (q_2\circ\kappa^{-1})(x,v,\xi,\eta) = \frac{1}{2}(v^2 + \eta^2) +\frac{i}{\sqrt{2}}(v\xi - x\eta).
\]
The same procedure applied to $F(q_{2,R}) = \kappa F(q_2)\kappa^{-1}$ shows that
\[
	A_+ = \left(\begin{array}{cc}i & 0 \\ 0 & i\end{array}\right).
\]
We therefore may conjugate with an FBI-Bargmann transform quantizing the linear transformation
\[
	\kap_i = \left(\begin{array}{cccc} 1 & 0 & -i & 0 \\ 0 & 1 & 0 & -i \\ -i/2 & 0 & 1/2 & 0 \\ 0 & -i/2 & 0 & 1/2\end{array}\right)
\]
obtained from (\ref{ekapADef}).  This results in the symbol
\begin{equation}\label{eKFPPhi1}
	\tilde{q}_{2,0}(x,v,\xi,\eta) = (q_{2,R}\circ \kap_i^{-1})(x,v,\xi,\eta) = iv\eta -\frac{i}{\sqrt{2}}x\eta + \frac{i}{\sqrt{2}}v\xi,
\end{equation}
where $\tilde{q}_{2,0}^w(x,v,hD_x,hD_v)$ is viewed as an operator on
\[
	H_{\Phi_1}(\Bbb{C}^2;h), \quad \Phi_1(x) = \frac{1}{4}|(x,v)|^2
\]
since $C_+ = 0$ from (\ref{eCplusDef}).  Note that
\[
	\tilde{q}_{2,0}(x,v,\xi,\eta) = (M_1(x,v))\cdot(\xi,\eta),\quad M_1 = \left(\begin{array}{cc} 0 & i/\sqrt{2} \\ -i/\sqrt{2} & i\end{array}\right),
\]
and that the eigenvalues of $M_1$ are the same as those of $2F(q)$ which lie in the upper half-plane.

Then the matrix $G$ corresponding to the change of variables $\kappa_G$ may be computed via the assumption that
\[
	(\tilde{q}_{2,0}\circ \kappa_G^{-1})(x,v,\xi,\eta) = (M(x,v))\cdot(\xi,\eta)
\]
where $M$ is diagonal because the eigenvalues of $M_1$ are distinct.  Letting
\begin{equation}\label{eGKFP}
	G = \left(\begin{array}{cc} (1+i)/\sqrt{2} & (1-i)/\sqrt{2} \\ 1 & 1\end{array}\right)
\end{equation}
gives our final result, that $Q_2(h)$ is unitarily equivalent to the operator
\[
	\tilde{Q}_2(h) = \tilde{q}_2^w(x,v,hD_x,hD_v)
\]
for
\[
	\tilde{q}_2(x,v,\xi,\eta) = \frac{1+i}{2}x\xi + \frac{-1+i}{2}v\eta
\]
acting on $H_{\Phi_2}(\Bbb{C}^2;h)$ with
\[
	\Phi_2(x,v) = \frac{1}{4}|G(x,v)|^2 = \frac{1}{2}|(1+i)x+(1-i)v|^2+|x+v|^2.
\]
We also note that our choice of $G$ was not unique.  For instance, replacing $G$ with $rG$ for any $r > 0$ would suffice.
\end{example}

\begin{example}\label{exJordan1}
As a final example, we discuss an elliptic quadratic form whose Hamilton map contains Jordan blocks, also studied in Section 4 of \cite{HiSjVi2011}.  The example is notable for exhibiting rapid resolvent growth while having orthogonal spectral projections whose ranges have high dimension.  Under small perturbations which split the eigenvalues of the Hamilton map, the resulting split spectral projections are no longer orthogonal and in fact have norms with rapid exponential growth in $1/h$.

The easiest place to begin is on the FBI transform side, as we are free to write
\begin{equation}\label{eJordanSymbol}
	\tilde{q}_3(x,\xi) = (Mx)\cdot\xi, \quad (x,\xi) \in \Bbb{R}^4
\end{equation}
for $M$ already in Jordan normal form:
\[
	M = \left(\begin{array}{cc} 2i & 1 \\ 0 & 2i\end{array}\right).
\]
We regard $\tilde{q}_3^w(x,hD_x)$ as acting on
\[
	H_{\Phi_1}(\Bbb{C}^2;h), \quad \Phi_1(x) = \frac{1}{4}|x|^2.
\]
If one wishes, one may invert the FBI-Bargmann transform and canonical transformation used in Example \ref{exKFP1} to obtain a unitarily equivalent operator on $L^2(\Bbb{R}^2)$; a similar formula in terms of creation-annihilation operators is in found in \cite{HiSjVi2011}.

As an example of a small perturbation, we consider the operator
\[
	\tilde{q}_{3,\eps}(x,\xi) = \tilde{q}_3(x,\xi) + \eps x_2 \xi_2.
\]
Instead of passing back to the real side and beginning to straighten $\Lambda^-$ anew, we merely note that 
\[
	M_\eps = \left(\begin{array}{cc} 2i & 1 \\ 0 & 2i+\eps\end{array}\right)
\]
may be diagonalized by the change of variables given by
\begin{equation}\label{eGJordan}
	G_\eps = \left(\begin{array}{cc}1 & 1 \\ 0 & \eps \end{array}\right).
\end{equation}
Ellipticity of the operator on $H_{\Phi_1}$ given by $(M_\eps x)\cdot\xi$ when $\eps \in \Bbb{R}$ is easy to check on the FBI transform side.  By (\ref{eLambdaPhiDef}), we have that 
\[
	\xi(x) = \frac{2}{i}\partial_x \Phi_1(x) = \frac{2}{i}\partial_x\left(\frac{1}{4}x\bar{x}\right) = \frac{1}{2i}\bar{x},
\]
and so
\[
	\jvRe((M_\eps x)\cdot\xi) = |x|^2 + \jvRe\left(\frac{1}{2i}x_2 \bar{x}_1\right),\quad \forall (x,\xi) \in \Lambda_{\Phi_1}.
\]
Ellipticity follows from the Cauchy-Schwarz inequality.

We will see in Section \ref{ssTransferringdxStar} that, in this case, Theorem \ref{tBoundProj} depends on
\[
	\Phi_2(x) = \frac{1}{4}|G_\eps x|^2, \quad \Phi_2^\dagger(x) = \frac{1}{4}|(G_\eps^*)^{-1}x|^2.
\]
This means that the large condition number $||G_\eps||\,||(G_\eps^*)^{-1}||$, which occurs because $M_\eps$ is nearly a Jordan block, should result in a very large rate of exponential growth in $h^{-1}$ for spectral projections for $\tilde{q}_{3,\eps}^w(x,hD_x)$.  This dependence is verified for $0 < \eps \ll 1$ in Section \ref{ssNumericsExponentialGrowth}.

We furthermore may examine the case where the operator $\tilde{q}_{3,0}^w(x,hD_x)$, obtained by setting $\eps = 0$, acts instead on $H_{\Phi_2}$ for $\Phi_2(x) = \frac{1}{4}|Gx|^2$ where $G \in \Bbb{C}^{2\times 2}$ is an invertible matrix for which $\tilde{q}_{3,0}(x,\xi)$ is elliptic along $\Lambda_{\Phi_2}$.  (While this ellipticity condition is necessary, the collection of such $G$ is certainly an open set containing the identity matrix and is therefore nontrivial.)  By Theorem \ref{tCharacterizeProj}, we see that the ranges of spectral projections associated with $\tilde{q}_{3,0}^w(x,hD_x)$ are precisely the spaces
\[
	E_m = \opnm{span}\{x^\alpha\::\: |\alpha| = m\} \subset H_{\Phi_2}.
\]
The spectral projections of this non-selfadjoint operator are orthogonal, yet Corollaries \ref{cUpperBound} and \ref{cElementaryExponential} indicate exponential growth.  Furthermore, following \cite{HiSjVi2011}, we expect the resolvent of the operator to be quite large, particularly when close to the spectrum.  This demonstrates the significant complications in the case where $F(q)$ has Jordan blocks, where the dimension of the range of spectral projections becomes large.

\end{example}

\subsection{A high-energy subelliptic estimate}\label{ssSubelliptic}

The elliptic estimate in Section 3 of \cite{HiSjVi2011}, like those in many other works, relies essentially upon a lower bound for $\jvRe \langle u, \tilde{q}^w(x,hD_x)u\rangle_{H_{\Phi_2}}$ for $u\in H_{\Phi_2}$ supported away from the origin, and this lower bound is a consequence of a lower bound for the symbol on $\opnm{supp}u$.  Lower bounds for $\tilde{q}^w(x,hD_x) - z$ are then obtained from the triangle inequality.  As we have already seen in Example \ref{exKFP1}, quadratic symbols satisfying (\ref{eRealSemidef}) and (\ref{eTrivialS}) are generally not bounded from below away from the origin.  Numerics presented in Section \ref{ssNumerics} suggest that the lower bounds which hold for elliptic operators are false in the partially elliptic case, since exponential resolvent growth appears to persist for energies $C/h$ even when $C$ is taken large.

Instead, we may use a result of Pravda-Starov \cite{PS2011}.  In the elliptic case, one may bound $\tilde{q}^w(x,D_x)$ from below (with error) by $x^2+D_x^2$, which corresponds to the symbol bound
\[
	q(x,\xi) \geq \frac{1}{C}|(x,\xi)|^2.
\]
In the non-elliptic case, one is forced to accept a lower bound given by a more slowly-growing symbol.

We recall from Theorem 1.2.1 of \cite{PS2011} the non-semiclassical estimate
\[
	||(\langle (x,\xi)\rangle^{2/(2k_0+1)})^w u||_{L^2(\Bbb{R}^n)}\leq C(||q^w(x,D_x)u||_{L^2(\Bbb{R}^n)} + ||u||_{L^2(\Bbb{R}^n)}).
\]
Here, $k_0$ is defined in (\ref{ek0Def}).  One might convert this to a semiclassical estimate via conjugation with the usual unitary change of variables (\ref{eScaleOuthOp}).  This induces a unitary equivalence between
\[
	q^w(x,D_x) \sim h^{-1}q^w(x,hD_x)
\]
and
\[
	(\langle(x,\xi)\rangle^{2/(2k_0+1)})^w \sim h^{-1/(2k_0+1)}p_0^w(x,hD_x)
\]
for
\[
	p_0(x,\xi) = (h+x^2+\xi^2)^{1/(2k_0+1)}.
\]
Derivatives of this symbol grow rapidly at the origin as $h\rightarrow 0^+$, and therefore one could possibly modify the symbol calculus in some way to obtain lower bounds directly.

To avoid these issues, we follow \cite{OtPaPS2012} in passing to the functional calculus for the operator
\[
	\Lambda^2 = 1+x^2+D_x^2,
\]
since the study of the semiclassical harmonic oscillator $x^2+(hD_x)^2$ falls well within the subject of the present work.  From (43) in \cite{OtPaPS2012} with $\nu = 0$ and $\delta = 2k_0/(2k_0+1)$ for $k_0$ as in (\ref{ek0Def}), we have
\[
	||(\Lambda^2)^{1-\delta} u||_{L^2(\Bbb{R}^n)} \leq C\left(||q^w(x,D_x)u||_{L^2(\Bbb{R}^n)} + ||u||_{L^2(\Bbb{R}^n)}\right).
\]
Using the same unitary change of variables (\ref{eScaleOuthOp}), which may be passed inside the exponent $1-\delta$ via the functional calculus, we have
\begin{equation}\label{eFunctionalKarel}
	||(\Lambda_h^2)^{1-\delta} u||_{L^2(\Bbb{R}^n)}\leq C\left(h^{-1}||q^w(x,hD_x)u||_{L^2(\Bbb{R}^n)} + ||u||_{L^2(\Bbb{R}^n)}\right)
\end{equation}
for
\begin{equation}\label{eHOLambdah}
	\Lambda_h^2 = 1+h^{-1}(x^2+(hD_x)^2).
\end{equation}

We recall that, for all $\alpha \in \Bbb{N}_0^n$, there exist Hermite polynomials $f_\alpha(x;h)$ of degree $|\alpha|$ where $\{f_\alpha(x;h) e^{-x^2/2h}\}_{\alpha \in \Bbb{N}_0^n}$ orthonormally diagonalizes $x^2+(hD_x)^2$ acting on $L^2(\Bbb{R}^n)$, with eigenvalues
\[
	(x^2+(hD_x)^2)f_\alpha(x;h)e^{-x^2/2h} = h(2|\alpha|+n)f_\alpha(x;h)e^{-x^2/2h}.
\]
Explicitly, we recall that $f_\alpha(x;h)e^{-x^2/(2h)}$ may be obtained as the normalization in $L^2(\Bbb{R}^n)$ of $(x-ihD_x)^\alpha e^{-x^2/(2h)}$. Conjugating with the unitary transformation in Proposition \ref{pNormalForm} which maps $L^2(\Bbb{R}^n)$ to $H_{\Phi_2}(\Bbb{C}^n)$ takes the collection $\{f_\alpha e^{-x^2/2h}\}_{\alpha \in \Bbb{N}_0^n}$ to some orthonormal collection of
\begin{equation}\label{evalpha}
	v_\alpha(x) = p_\alpha(x;h)e^{-Q_0(x)/h}
\end{equation}
in $H_{\Phi_2}(\Bbb{C}^n)$ with $\opnm{deg}p_\alpha = |\alpha|$.  (See Remark \ref{rGaussianInvariance}.)  Furthermore, because 
\[
	v_0(x) = Ce^{-Q_0(x)/h} \in H_{\Phi_2}
\]
for some $h$-dependent constant $C$, we have that
\[
	||v_0||_{H_\Phi} = |C|^2 \int_{\Bbb{C}^n} \jvexp\left(-\frac{1}{h}Q_0(x) - \frac{1}{h}\overline{Q_0(x)} - \frac{2}{h}\Phi_2(x)\right)\,dL(x) < \infty.
\]
We conclude from this that
\begin{equation}\label{etildePhiEllipticity}
	\tilde{\Phi}(x) := \Phi_2(x) + \jvRe Q_0(x)
\end{equation}
is a strictly convex real-valued quadratic weight.

As in \cite{HiSjVi2011}, we divide $H_{\Phi_2}$ into low and high energy subspaces, where the energy of a monomial $x^\alpha$ is considered to be $h^{-1}|\alpha|$.  We will here use the notation
\begin{equation}\label{eLowEnergyDef}
	\mathcal{K}_{C_0} = \mathcal{K}_{C_0, \Phi} = \{u\in H_{\Phi}\::\: \partial^\alpha u(0) = 0, \forall |\alpha| > C_0h^{-1}\},
\end{equation}
\begin{equation}\label{eHighEnergyDef}
	\mathcal{H}_{C_0} = \mathcal{H}_{C_0, \Phi} = \{u\in H_{\Phi}\::\: \partial^\alpha u(0) = 0, \forall |\alpha| \leq C_0h^{-1}\}.
\end{equation}
Recall that $\mathcal{K}_{C_0}$ and $\mathcal{H}_{C_0}$ (with weight function $\Phi_2$) are $\tilde{q}^w(x,hD_x)$-invariant.  We effectively repeat the proof of Proposition 3.2 in \cite{HiSjVi2011}, making simple modifications, to establish a characterization of the localization of high- and low-energy functions in terms of the spectral projections for a semiclassical harmonic oscillator.  We remark that the semiclassical harmonic oscillator could easily be replaced by the Weyl quantization of any real-valued elliptic quadratic form with only minor changes due to changing eigenvalues of the operator.  We also remark that the upper bound $e^{-1/(4h)}$ is essentially arbitrary in that it could be replaced with $e^{-C/h}$ for any fixed $C$ at the price of increasing $C_0$.

Some of the computations in polar coordinates below were inspired by similar considerations in Lemma 4.4 of \cite{KaKe2000}.

\begin{lemma}\label{lEllipticSpectralProj}
Let $\Phi:\Bbb{C}^n\rightarrow \Bbb{R}$ be strictly convex and quadratic, and let $\tilde{A}$ be an unbounded operator on $H_\Phi(\Bbb{C}^n)$ which is unitarily equivalent to the semiclassical harmonic oscillator $x^2+(hD_x)^2$ acting on $L^2(\Bbb{R}^n)$.  Specifically, assume that there exists some $Q_0:\Bbb{C}^n\rightarrow \Bbb{C}$ holomorphic and quadratic and $\{p_\alpha(x;h)\}_{\alpha\in \Bbb{N}_0^n}$ holomorphic polynomials for which the following hold:
\begin{itemize}
	\item the weight given by $\tilde{\Phi}(x) := \Phi(x) + \jvRe Q_0(x)$ is strictly convex,
	\item for each $\alpha \in \Bbb{N}^n_0$, we have $\opnm{deg}p_\alpha = |\alpha|$, 
	\item the collection $\{v_\alpha\}_{\alpha \in \Bbb{N}_0^n}$ defined via $v_\alpha(x) = p_\alpha(x)e^{-Q_0(x)/h}$ form an orthonormal basis for $H_\Phi$, and
	\item the $\{v_\alpha\}$ obey
	\[
		\tilde{A}v_\alpha = h(2|\alpha|+n)v_\alpha, \quad \forall \alpha\in\Bbb{N}_0^n,
	\]
	and thus diagonalize $\tilde{A}$.
\end{itemize}
With $C_1 > 0$, write $\Pi_{C_1}$ for the spectral projection associated with $\tilde{A}$ and the interval $[0,C_1]$; this may be written explicitly as
\begin{equation}\label{eHOProjDecomp}
	\Pi_{C_1}u = \sum_{\alpha\::\:2|\alpha|+n \leq C_1h^{-1}} \langle u, v_\alpha\rangle_{H_\Phi}v_\alpha.
\end{equation}

Then, for every $C_1 > 0$, there exists $C_0 > 0$ sufficiently large and $h_0 > 0$ sufficiently small for which, for all $h \in (0,h_0]$ and with $\mathcal{H}_{C_0}$ defined in (\ref{eHighEnergyDef}), we have the estimate
\[
	||\Pi_{C_1} u||_{H_\Phi} \leq \BigO(1)e^{-1/(4h)}||u||_{H_\Phi}, \quad \forall u \in \mathcal{H}_{C_0}.
\]
\end{lemma}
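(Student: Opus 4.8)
The plan is to estimate the operator norm of $\Pi_{C_1}$ restricted to $\mathcal{H}_{C_0}$ by realizing $\Pi_{C_1}$ as an honest orthogonal projection and splitting each resulting pairing into a contribution from a fixed neighbourhood of the origin, where $u\in\mathcal{H}_{C_0}$ is exponentially small because its Taylor series vanishes to high order, and a contribution from the complement, where the range of $\Pi_{C_1}$ is exponentially small because it is spanned by polynomials times a Gaussian of degree $\BigO(h^{-1})$. For the reduction, note that since $\{v_\alpha\}$ is orthonormal, $\Pi_{C_1}$ is the orthogonal projection onto $W:=\opnm{span}\{v_\alpha:2|\alpha|+n\le C_1h^{-1}\}$, so that $\|\Pi_{C_1}u\|_{H_\Phi}=\sup\{|\langle u,w\rangle_{H_\Phi}|:w\in W,\ \|w\|_{H_\Phi}\le 1\}$. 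Using $\opnm{deg}p_\alpha=|\alpha|$, any such $w$ equals $p\,e^{-Q_0/h}$ for a holomorphic polynomial $p$ with $\opnm{deg}p\le D:=\lfloor(C_1h^{-1}-n)/2\rfloor$; since $|e^{-Q_0(x)/h}|^2e^{-2\Phi(x)/h}=e^{-2\tilde{\Phi}(x)/h}$ one has $\|w\|_{H_\Phi}=\|p\|_{H_{\tilde{\Phi}}}$, and since the modulus of the weight $\overline{e^{-Q_0(x)/h}}e^{-2\Phi(x)/h}$ is $e^{-\tilde{\Phi}(x)/h}e^{-\Phi(x)/h}$ (the oscillatory factor $e^{i\jvIm Q_0(x)/h}$ drops out under $|\cdot|$),
\[
	|\langle u,w\rangle_{H_\Phi}|\le\int_{\Bbb{C}^n}|u(x)|\,|p(x)|\,e^{-\Phi(x)/h}e^{-\tilde{\Phi}(x)/h}\,dL(x).
\]
I would then fix a radius $R_*=R_*(C_1)$, to be chosen large, split this integral over $\{|x|\le R_*\}$ and $\{|x|>R_*\}$, and apply Cauchy--Schwarz on each piece against the weights $e^{-2\Phi/h}$ and $e^{-2\tilde{\Phi}/h}$, always using $\|p\|_{H_{\tilde{\Phi}}}\le 1$ for the $p$-factor.

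For the near-origin piece I want $\sup_{|x|\le R_*}|u(x)|$ to be exponentially small. Here is where polar coordinates enter: for $u\in\mathcal{H}_{C_0}$ and $|\omega|=1$, the entire function $r\mapsto u(r\omega)$ vanishes to order $\ge N_0:=\lfloor C_0h^{-1}\rfloor+1$ at $r=0$, so the Schwarz lemma gives $|u(r\omega)|\le(|r|/R)^{N_0}\max_{|x|=R}|u(x)|$ for $|r|\le R$. Taking $R=2R_*$ together with the elementary pointwise bound $|u(x)|\le\BigO(1)e^{\BigO(|x|^2)/h}\|u\|_{H_\Phi}$ (Cauchy's formula on a ball of fixed radius, using that $\Phi$ is convex hence nonnegative) yields $\sup_{|x|\le R_*}|u(x)|\le\BigO(1)\,2^{-N_0}e^{\BigO(R_*^2)/h}\|u\|_{H_\Phi}$. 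Once $C_0$ is chosen large relative to $R_*$, the factor $2^{-N_0}e^{\BigO(R_*^2)/h}$ beats $e^{-1/h}$, and since $\Phi\ge 0$ this piece of the integral is $\le\BigO(1)e^{-1/h}\|u\|_{H_\Phi}$.

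For the away-from-origin piece I want $\int_{|x|>R_*}|p|^2e^{-2\tilde{\Phi}/h}\,dL$ to be exponentially small. The same ray argument, applied now to the polynomial $p$ of degree $\le D=\BigO(h^{-1})$, gives $|p(r\omega)|\le(D+1)(|r|/R)^D\max_{|x|=R}|p(x)|$ for $|r|\ge R$; combining this with $|p(x)|\le\BigO(1)e^{\BigO(|x|^2)/h}$ and optimizing the free radius $R$ produces a sub-Gaussian estimate of the shape $|p(x)|^2e^{-2\tilde{\Phi}(x)/h}\le\BigO(h^{-2})\exp\!\big(h^{-1}[\BigO(C_1)(1+\log(|x|^2/C_1))-c_0|x|^2]\big)$ for $|x|\gtrsim\sqrt{C_1}$, where $c_0:=\inf_{|\omega|=1}\tilde{\Phi}(\omega)>0$ by strict convexity of $\tilde{\Phi}$. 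Since the quadratic term swamps the logarithm, for $R_*$ large enough this is $\le\BigO(1)e^{-c_0|x|^2/h}$ on $\{|x|\ge R_*\}$, so integrating gives $\int_{|x|>R_*}|p|^2e^{-2\tilde{\Phi}/h}\,dL\le\BigO(1)e^{-c_0R_*^2/(2h)}$; enlarging $R_*$ makes this $\le\BigO(1)e^{-2/h}$, and Cauchy--Schwarz against $\|u\|_{H_\Phi}$ bounds this piece by $\BigO(1)e^{-1/h}\|u\|_{H_\Phi}$ as well.

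Adding the two contributions gives $|\langle u,w\rangle_{H_\Phi}|\le\BigO(1)e^{-1/h}\|u\|_{H_\Phi}$ for every unit $w\in W$, which is the assertion (the exponent $1/(4h)$, or any $C/h$, is obtained by enlarging $C_0$, as the remark preceding the lemma already indicates). The constants are chosen in the order $C_1\rightsquigarrow R_*\rightsquigarrow C_0\rightsquigarrow h_0$: $R_*$ large depending on $C_1$ and the weights $\Phi,\tilde{\Phi},Q_0$, then $C_0$ large depending on $R_*$, then $h_0$ small. The main obstacle I expect is the sub-Gaussian pointwise estimate in the away step: one must control how the degree $D\asymp C_1/(2h)$ of the Hermite-type polynomials competes with the Gaussian decay rate $c_0/h$ of the weight $e^{-2\tilde{\Phi}/h}$, and it is precisely the freedom to take $R_*$ (the $\sqrt{C_1}$-scale on which these polynomials concentrate) as large as one likes that makes the argument close; the remaining losses are only polynomial in $h^{-1}$ and are absorbed into the exponential gain, exactly as in the proof of Proposition 3.2 of \cite{HiSjVi2011}.
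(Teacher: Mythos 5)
Your argument is correct, and it follows the same basic architecture as the paper's proof: split each pairing into a region near the origin, where $u\in\mathcal{H}_{C_0}$ is exponentially small because its Taylor series vanishes to order $\gtrsim C_0h^{-1}$, and a region away from the origin, where the range of $\Pi_{C_1}$ is exponentially small because it consists of polynomials of degree $\BigO(C_1h^{-1})$ times the fixed Gaussian $e^{-Q_0/h}$ (equivalently, elements of $\mathcal{K}_{C_1/2,\tilde{\Phi}}$ after removing the Gaussian). The two genuine differences are in execution. First, you exploit orthonormality of $\{v_\alpha\}$ to treat $\Pi_{C_1}$ as a single orthogonal projection onto $W$ and bound $\sup\{|\langle u,w\rangle|:\ w\in W,\ \|w\|_{H_\Phi}\le 1\}$ directly; the paper instead estimates each $|\langle u,v_\alpha\rangle_{H_\Phi}|$ separately and sums over the $\BigO(h^{-n})$ admissible $\alpha$ by the triangle inequality, absorbing the polynomial loss into the exponential gain. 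Your reduction is cleaner and loses nothing. Second, for the tail estimate $\|\un_{\{|x|>R_*\}}p\|_{L^2_{\tilde{\Phi}}}\le e^{-c/h}$ you use a pointwise sub-Gaussian bound obtained from Cauchy's estimates on rays and optimization of the free radius $R\asymp\sqrt{C_1}$; the paper instead sandwiches $\tilde{\Phi}$ between radial weights $C_\ell|x|^2\le\tilde{\Phi}\le C_u|x|^2$, uses orthogonality of monomials for radial weights, and computes the Gaussian moment tails $\int_{|x_j|>R}|x_j|^{2\alpha_j}e^{-2C_\ell|x_j|^2/h}\,dL$ exactly in polar coordinates, arriving at the bound $ne^{-C_\ell K^2/(nh)}(2C_u/C_\ell)^{n+C_1/(2h)}$. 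Both routes turn on the same competition between the degree $\asymp C_1/(2h)$ and the Gaussian decay rate $\asymp c_0/h$, resolved by taking the cutoff radius large depending on $C_1$; your pointwise version is slightly more robust (it does not need the exact moments), while the paper's version gives explicit constants. Your near-origin step is exactly the content of (3.3) in the cited reference, which the paper simply quotes, so there is no gap there either.
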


\begin{proof} Because the $\{v_\alpha\::\: 2|\alpha|+n \leq C_1h^{-1}\}$ are low-energy and the $u\in \mathcal{H}_{C_0}$ are high-energy, it is natural to use a radial cutoff function (which does not need to be smooth) and the Cauchy-Schwarz inequality:
\begin{eqnarray*}
	|\langle u,v_\alpha\rangle_{H_\Phi}| &\leq& |\langle \un_{\{|x|\leq K\}}u, v_\alpha\rangle_{L^2_\Phi}| + |\langle u, \un_{\{|x|>K\}} v_\alpha\rangle_{L^2_\Phi}|
	\\ &\leq & ||\un_{\{|x|\leq K\}}u||_{L^2_\Phi} + ||\un_{\{|x|>K\}}v_\alpha||_{L^2_\Phi}\,||u||_{H_\Phi}.
\end{eqnarray*}
From (3.3) in \cite{HiSjVi2011} we have the estimate
\[
	||\un_{\{|x|\leq K\}}u||_{L^2_\Phi} \leq \BigO_K(1)e^{-1/(2h)}||u||_{H_\Phi}
\]
when $u \in \mathcal{H}_{C_0}$ for $C_0$ taken sufficiently large depending on $K$.

We will establish the corresponding bound
\begin{equation}\label{eLowEnergyFinalLowerBound}
	||\un_{\{|x| > K\}}v_\alpha||_{L^2_\Phi} \leq \BigO_{C_1}(1)e^{-1/(2h)}
\end{equation}
for $K$ sufficiently large depending on $C_1$ and for all $\alpha$ with $2|\alpha|+n \leq C_1h^{-1}$.  With these two bounds, we may choose $C_0$ sufficiently large depending on the $K$ obtained from $C_1$ to ensure that
\[
	\langle u,v_\alpha\rangle_{H_\Phi} \leq \BigO_{C_1}(1)e^{-1/(2h)}||u||_{H_\Phi},
\]
uniformly when $u \in \mathcal{H}_{C_0}$, when $2|\alpha|+n\leq C_1h^{-1}$, and when $h$ is sufficiently small.  The lemma then follows from the simple observation that there are at most $\BigO(h^{-n})$ such $\alpha$: since $h^{-n}e^{-1/(2h)} \ll e^{-1/(4h)}$ as $h\rightarrow 0^+$, the lemma is established by the triangle inequality.

The bound (\ref{eLowEnergyFinalLowerBound}) follows from the same method as in the proof of Proposition 3.3 in \cite{HiSjVi2011}.  We begin by switching to the weight $\tilde{\Phi}$ by noting that
\[
	||\un_{\{|x| > K\}}v_\alpha||_{L^2_\Phi} = ||\un_{\{|x| > K\}}p_\alpha||_{L^2_{\tilde{\Phi}}}
\]
and that $\{p_\alpha\}_{\alpha\in\Bbb{N}_0^n}$ forms an orthonormal sequence in $H_{\tilde{\Phi}}$.  Furthermore, it follows from the assumption $2|\alpha|+n \leq C_1h^{-1}$ that $p_\alpha \in \mathcal{K}_{C_1/2,\tilde{\Phi}}$.

Strict convexity of $\tilde{\Phi}$ means that there exist $C_\ell, C_u > 0$ for which
\[
	C_\ell |x|^2 \leq \tilde{\Phi}(x) \leq C_u |x|^2, \quad \forall x \in \Bbb{C}^n.
\]
We will write $\Phi_\ell(x) = C_\ell |x|^2$ and $\Phi_u(x) = C_u|x|^2$.  The reasoning leading to (3.14) in \cite{HiSjVi2011} shows that, when $w = \sum_{|\alpha| \leq N}a_\alpha x^\alpha$ is a polynomial,
\begin{equation}\label{eLowEnergyLowerBound}
	||w||_{H_{\tilde{\Phi}}}^2 \geq ||w||_{H_{\Phi_u}}^2 = \sum_{|\alpha|\leq N} |a_\alpha|^2 \left(\frac{h}{2C_u}\right)^{n+|\alpha|}\pi^n\alpha!.
\end{equation}
To obtain the reverse estimate for $||\un_{\{|x|>K\}}w||_{L^2_{\tilde{\Phi}}}^2$, we use a similar bound with the radial weight $\Phi_\ell$, which is convenient because then $x^\alpha \perp x^\beta$ when $\alpha \neq \beta$:
\begin{equation}\label{eLowEnergyNearInfinity}
	||\un_{\{|x|>K\}}w||_{L^2_{\tilde{\Phi}}}^2 \leq ||\un_{\{|x|>K\}}w||_{L^2_{\Phi_\ell}}^2 = \sum_{|\alpha|\leq N} |a_\alpha|^2\int_{|x|>K} |x^\alpha|^2e^{-2C_\ell |x|^2/h}\,dL(x).
\end{equation}

In one (complex) dimension and integrating on $\{|x_j| > R\}$, we obtain an upper bound with exponential decay by factoring out the maximum value of $e^{-C_\ell |x_j|^2/h}$ on $\{R \leq |x_j| < \infty\}$:
\begin{eqnarray*}
	\int_{|x_j| > R} |x_j|^{2\alpha_j} e^{-\frac{2}{h}C_\ell |x_j|^2}\,dL(x_j) &=& 2\pi \int_R^\infty r^{2\alpha_j+1}e^{-2C_\ell r^2/h}\,dr
	\\ &\leq& 2\pi e^{-C_\ell R^2/h}\int_R^\infty r^{2\alpha_j+1} e^{-C_\ell r^2/h}\,dr
	\\ &\leq& 2\pi e^{-C_\ell R^2/h}\int_0^\infty r^{2\alpha_j+1} e^{-C_\ell r^2/h}\,dr
	\\ &=& 2\pi e^{-C_\ell R^2/h} 2^{\alpha_j} \alpha_j ! \left(\frac{h}{2C_\ell}\right)^{\alpha_j+1}.
\end{eqnarray*}
Returning to $\Bbb{C}^n$, we note that in order to have $|x| > K$ there must exist at least one $j$ for which $|x_j| > Kn^{-1/2}$.  We therefore estimate
\begin{multline*}
	\int_{|x| > K} |x^\alpha|^2 e^{-\frac{2}{h}C_\ell |x|^2}\,dL(x) \leq \sum_{j=1}^n \int_{|x_j| > Kn^{-1/2}} |x^\alpha|^2 e^{-\frac{2}{h}C_\ell |x|^2}\,dL(x)
	\\ = \sum_{j=1}^n \left(\int_{|x_j|>Kn^{-1/2}}|x_j|^{2\alpha_j}e^{-\frac{2}{h}C_\ell |x_j|^2}\,dL(x_j) \prod_{k\neq j} \int_{\Bbb{C}}|x_k|^{2\alpha_k}e^{-\frac{2}{h}C_\ell |x_k|^2}\,dL(x_k)\right)
	\\ \leq \sum_{j=1}^n \left(2\pi e^{-C_\ell K^2/(nh)} 2^{\alpha_j} \alpha_j ! \left(\frac{h}{2C_\ell}\right)^{\alpha_j+1}\prod_{k\neq j} (2\pi)2^{\alpha_k} \alpha_k ! \left(\frac{h}{2C_\ell}\right)^{\alpha_k+1}\right)
	\\ \leq n e^{-C_\ell K^2/(nh)} \left(\frac{h}{C_\ell}\right)^{n+|\alpha|}\pi^n\alpha!.
\end{multline*}
Applying this to (\ref{eLowEnergyNearInfinity}) yields the estimate
\begin{equation}\label{eLowEnergyNearInfinityUpperBound}
	||\un_{\{|x| > K\}} w||^2_{L^2_{\tilde{\Phi}}}\leq \sum_{|\alpha|\leq N} |a_\alpha|^2 n e^{-C_\ell K^2/(nh)} \left(\frac{h}{C_\ell}\right)^{n+|\alpha|}\pi^n\alpha!.
\end{equation}

Uniformly in $\alpha$ for which $|\alpha| \leq h^{-1}C_1/2$, we have that
\[
	\left(\frac{h}{C_\ell}\right)^{n+|\alpha|} \leq \left(\frac{2C_u}{C_\ell}\right)^{n+C_1/(2h)}\left(\frac{h}{2C_u}\right)^{n+|\alpha|}.
\]
Combining (\ref{eLowEnergyLowerBound}) and (\ref{eLowEnergyNearInfinityUpperBound}) with $w = p_\alpha \in \mathcal{K}_{C_1/2,\tilde{\Phi}}$ then yields
\[
	||\un_{\{|x|> K\}}p_\alpha||_{L^2_{\tilde{\Phi}}}^2 \leq n e^{-C_\ell K^2/(nh)} \left(\frac{2C_u}{C_\ell}\right)^{n+C_1/(2h)}||p_\alpha||_{H_{\tilde{\Phi}}}^2.
\]
The corresponding estimate holds upon replacing $p_\alpha \in H_{\tilde{\Phi}}$ with $v_\alpha \in H_\Phi$ and changing norms accordingly.  Taking a square root and a logarithm reveals that (\ref{eLowEnergyFinalLowerBound}) is certainly established for $h$ sufficiently small if
\[
	\frac{1}{2}\left(-\frac{C_\ell}{n} K^2 + \frac{C_1}{2}\log\left(\frac{2C_u}{C_\ell}\right)\right) < -\frac{1}{2}.
\]
This is accomplished by setting $K$ sufficiently large, and this completes the proof.
\end{proof}

With the preceding lemma, we are in a position to prove an elliptic-type estimate upon restricting to $u \in \mathcal{H}_{C_0}$.  However, the weaker ellipticity forces us to choose our spectral parameter in a set of size $h^{\delta} \ll 1$.

\begin{proposition}\label{pPartiallyEllipticEstimate} Let $q(x,\xi):\Bbb{R}^{2n}\rightarrow \Bbb{C}$ be a quadratic form which is partially elliptic and has trivial singular space in the sense of (\ref{eRealSemidef}) and (\ref{eTrivialS}).  Let $\Phi_2:\Bbb{C}^n\rightarrow \Bbb{R}$, strictly convex, real-valued, and quadratic, and let
\[
	\tilde{q}(x,\xi) = (Mx)\cdot\xi,
\]
with $M$ in Jordan normal form, be as in the reduction to normal form in Proposition \ref{pNormalForm}.  Let $\delta = 2k_0/(2k_0+1)$ with $k_0$ from (\ref{ek0Def}).

For every $C_2 > 0$ there exists $C_0 > 0$ sufficiently large and $h_0 > 0$ sufficiently small that, for every $u \in \mathcal{H}_{C_0} = \mathcal{H}_{C_0, \Phi_2}$ defined in (\ref{eHighEnergyDef}), every $h \in (0,h_0]$, and every $z\in \Bbb{C}$ with $|z| \leq C_2h^{\delta}$, we have the \emph{a priori} estimate
\[
	||u||_{H_{\Phi_2}} \leq \BigO(1) h^{-\delta}||(\tilde{q}^w(x,hD_x)-z)u||_{H_{\Phi_2}}.
\]
\end{proposition}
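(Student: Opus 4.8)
The plan is to transfer both the subelliptic estimate (\ref{eFunctionalKarel}) and the low-energy localization of Lemma \ref{lEllipticSpectralProj} to the weighted space $H_{\Phi_2}$ via the unitary $\mathcal{T}$ of Proposition \ref{pNormalForm}, and then to use the former to bound $||u||_{H_{\Phi_2}}$ in terms of $||\tilde{q}^w(x,hD_x)u||_{H_{\Phi_2}}$ once the latter has removed the directions on which $\tilde{q}^w(x,hD_x)$ is spectrally small. First I would conjugate (\ref{eFunctionalKarel}) by $\mathcal{T}$: writing $\tilde{A} = \mathcal{T}(x^2+(hD_x)^2)\mathcal{T}^*$, which by Remark \ref{rGaussianInvariance} acts diagonally on the transferred Hermite basis $\{v_\alpha\}$ of (\ref{evalpha}) with $\tilde{A}v_\alpha = h(2|\alpha|+n)v_\alpha$, setting $\tilde{\Lambda}_h^2 = 1 + h^{-1}\tilde{A}$, and recalling $\mathcal{T}q^w(x,hD_x)\mathcal{T}^* = \tilde{q}^w(x,hD_x)$, the estimate (\ref{eFunctionalKarel}) becomes
\[
	||(\tilde{\Lambda}_h^2)^{1-\delta}u||_{H_{\Phi_2}} \leq C\left(h^{-1}||\tilde{q}^w(x,hD_x)u||_{H_{\Phi_2}} + ||u||_{H_{\Phi_2}}\right), \quad u \in H_{\Phi_2},
\]
where $(\tilde{\Lambda}_h^2)^{1-\delta}$ is defined by the functional calculus and acts on $v_\alpha$ by the scalar $(1+2|\alpha|+n)^{1-\delta}$.

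Next I would bring in the high-energy hypothesis. Fix $C_1 > 0$ to be chosen below and let $\Pi_{C_1}$ denote the spectral projection of $\tilde{A}$ onto $[0,C_1]$, i.e.\ the orthogonal projection onto $\opnm{span}\{v_\alpha : 2|\alpha|+n \leq C_1 h^{-1}\}$. By Lemma \ref{lEllipticSpectralProj}, applicable with $\Phi = \Phi_2$ in view of the discussion of Section \ref{ssSubelliptic} (in particular (\ref{etildePhiEllipticity})), there exist $C_0 = C_0(C_1)$ sufficiently large and $h_0 > 0$ for which $||\Pi_{C_1}u||_{H_{\Phi_2}} \leq \BigO(1)e^{-1/(4h)}||u||_{H_{\Phi_2}}$ whenever $u \in \mathcal{H}_{C_0,\Phi_2}$ and $h \in (0,h_0]$. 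Expanding $u = \sum_\alpha c_\alpha v_\alpha$ and noting that on the directions complementary to the range of $\Pi_{C_1}$ one has $\tilde{\Lambda}_h^2 > C_1 h^{-1}$, I obtain for such $u$
\[
	||(\tilde{\Lambda}_h^2)^{1-\delta}u||_{H_{\Phi_2}}^2 \geq (C_1 h^{-1})^{2(1-\delta)}\left(||u||_{H_{\Phi_2}}^2 - ||\Pi_{C_1}u||_{H_{\Phi_2}}^2\right) \geq \frac{1}{2}(C_1 h^{-1})^{2(1-\delta)}||u||_{H_{\Phi_2}}^2
\]
for $h$ sufficiently small. Combining this with the conjugated estimate and absorbing the zeroth-order term $||u||_{H_{\Phi_2}}$ into the left-hand side — legitimate for $h$ small since $1-\delta = (2k_0+1)^{-1} > 0$, so $h^{-(1-\delta)} \to \infty$ — then yields
\[
	||u||_{H_{\Phi_2}} \leq \frac{\BigO(1)}{C_1^{1-\delta}}\, h^{-1}\, h^{1-\delta}\,||\tilde{q}^w(x,hD_x)u||_{H_{\Phi_2}} = \frac{\BigO(1)}{C_1^{1-\delta}}\, h^{-\delta}\,||\tilde{q}^w(x,hD_x)u||_{H_{\Phi_2}}.
\]

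Finally I would introduce the spectral parameter. For $z \in \Bbb{C}$ with $|z| \leq C_2 h^{\delta}$, the triangle inequality gives $||\tilde{q}^w(x,hD_x)u||_{H_{\Phi_2}} \leq ||(\tilde{q}^w(x,hD_x) - z)u||_{H_{\Phi_2}} + C_2 h^{\delta}||u||_{H_{\Phi_2}}$, so the previous display becomes
\[
	||u||_{H_{\Phi_2}} \leq \frac{\BigO(1)}{C_1^{1-\delta}}\, h^{-\delta}\,||(\tilde{q}^w(x,hD_x)-z)u||_{H_{\Phi_2}} + \frac{\BigO(1)\,C_2}{C_1^{1-\delta}}\,||u||_{H_{\Phi_2}}.
\]
Choosing $C_1$ large enough, depending on $C_2$, that the last coefficient is at most $\frac{1}{2}$ — which in turn fixes the admissible $C_0 = C_0(C_2)$ through Lemma \ref{lEllipticSpectralProj} — and absorbing once more gives the asserted bound $||u||_{H_{\Phi_2}} \leq \BigO(1)h^{-\delta}||(\tilde{q}^w(x,hD_x)-z)u||_{H_{\Phi_2}}$. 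I expect the only genuine subtlety to be this last quantifier ordering: $C_0$ must be permitted to grow with $C_2$ precisely so that the $h$-independent contribution $C_2 h^{-\delta}|z|\,||u||_{H_{\Phi_2}}$ produced by the spectral parameter can be swallowed; everything else is the routine assembly of Pravda-Starov's subelliptic estimate with the localization lemma already in hand.
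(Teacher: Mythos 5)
Your proposal is correct and follows essentially the same route as the paper: conjugate the Pravda-Starov estimate (\ref{eFunctionalKarel}) by $\mathcal{T}$, use Lemma \ref{lEllipticSpectralProj} to show $(1-\Pi_{C_1})$ is essentially the identity on $\mathcal{H}_{C_0}$, deduce the lower bound $\|(\tilde\Lambda_h^2)^{1-\delta}u\|\gtrsim (C_1h^{-1})^{1-\delta}\|u\|$, and then fix $C_1$ large depending on $C_2$ to absorb the spectral-parameter term. The quantifier ordering you flag at the end ($C_2 \mapsto C_1 \mapsto C_0, h_0$) is exactly how the paper arranges it.
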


\begin{proof} Let $\mathcal{T}:L^2(\Bbb{R}^n)\rightarrow H_{\Phi_2}(\Bbb{C}^n)$ be the unitary operator in Proposition \ref{pNormalForm}.  Then write $\Lambda_h^2$ as in (\ref{eFunctionalKarel}) and write
\[
	\tilde{\Lambda}^2_h = \mathcal{T}\Lambda^2_h\mathcal{T}^*.
\]
From (\ref{eHOLambdah}) and the discussion following, we see that
\[
	\tilde{A} = h(\tilde{\Lambda}^2_h - 1)
\]
is unitarily equivalent to the semiclassicassical harmonic oscillator in the sense of Lemma \ref{lEllipticSpectralProj}; write $\Pi_{C_1}$ for the associated spectral projection onto $[0, C_1]$ and $\{v_\alpha\}_{\alpha\in \Bbb{N}_0^n}$ for the associated orthonormal eigenbasis there.  Therefore 
\[
	\tilde{\Lambda}^2_h v_\alpha = (2|\alpha|+n+1)v_\alpha.
\]
It follows immediately from the fact that the $\{v_\alpha\}$ are orthonormal and the characterization (\ref{eHOProjDecomp}) that
\begin{eqnarray*}
	||(\tilde{\Lambda}_h^2)^{1-\delta} u||_{H_{\Phi_2}}^2 &=& \sum_{\alpha\in \Bbb{N}_0^n} (2|\alpha|+n+1)^{2(1-\delta)} |\langle u,v_\alpha\rangle|^2
	\\ &\geq& (1+C_1h^{-1})^{2(1-\delta)} ||(1-\Pi_{C_1})u||^2.
\end{eqnarray*}
Upon specifying $C_1$ later, we will choose $C_0$ as in Lemma \ref{lEllipticSpectralProj} and $h_0 > 0$ sufficiently small such that
\begin{equation}\label{eFinalEllipticity1}
	||(1-\Pi_{C_1})u|| \geq \frac{1}{2}||u||, \quad \forall u \in \mathcal{H}_{C_0}, \quad \forall h \in (0,h_0].
\end{equation}
If we apply this inequality to (\ref{eFunctionalKarel}), after conjugation by the same $\mathcal{T}$ afforded by Proposition \ref{pNormalForm}, we obtain the estimate
\begin{equation}\label{eFinalEllipticity2}
	\frac{1}{2}(1+C_1h^{-1})^{1-\delta}||u|| \leq ||(\tilde{\Lambda}^2_h)^{1-\delta}u|| \leq C_3(h^{-1}||\tilde{q}^w(x,hD_x)u||+||u||)
\end{equation}
for all $u \in \mathcal{H}_{C_0}$ and for all $h \in (0,h_0]$.

From the triangle inequality and the hypothesis $|z|\leq C_2h^{\delta}$, we have, for $u \in \mathcal{H}_{C_0}$ and $h \in (0,h_0]$,
\begin{multline}\label{eFinalEllipticity3}
	||(\tilde{q}^w(x,hD_x) - z) u|| \geq ||\tilde{q}^w(x,hD_x) u|| - |z|\, ||u||
	\\ \geq \left(\frac{h}{2C_3}(1+C_1h^{-1})^{1-\delta}-h-C_2h^\delta\right)||u||.
\end{multline}
We are then free to choose $C_1>0$ such that $C_1^{1-\delta}/(2C_3) = C_2 + 1$ and choose $C_0, h_0 > 0$ to establish (\ref{eFinalEllipticity1}) by Lemma \ref{lEllipticSpectralProj}.  We thus have from (\ref{eFinalEllipticity3}) that
\[
	||(\tilde{q}^w(x,hD_x) - z) u|| \geq (h^\delta - h) ||u||,
\]
which establishes the proposition so long as $h_0 < 1$.
\end{proof}

\subsection{Finite-dimensional analysis and proof of Theorem \ref{tExtendToPE}}

The finite-dimensional analysis proceeds identically to the analysis in Section 4 of \cite{HiSjVi2011}, as it relies only on the formula
\[
	\tilde{q}(x,\xi) = (Mx)\cdot \xi
\]
for $M$ in Jordan normal form (see Remark \ref{rqtilde} above).  We obtain the following analogous proposition to Proposition 4.2, and the remark afterwards, in \cite{HiSjVi2011}.

\begin{proposition}\label{pFiniteDim}
Let $q(x,\xi):\Bbb{R}^{2n}\rightarrow\Bbb{C}$ be a quadratic form which is partially elliptic and has trivial singular space in the sense of (\ref{eRealSemidef}) and (\ref{eTrivialS}), and as in Proposition \ref{pNormalForm}, consider the operator $\tilde{q}^w(x,hD_x)$ acting on $H_{\Phi_2}(\Bbb{C}^n)$ which is unitarily equivalent to $q^w(x,hD_x)$ acting on $L^2(\Bbb{R}^n)$.  Fix any $C_0, C_1 > 0$ and $L \geq 1$.  We use $\Phi_0 = |x|^2/2$ as a reference weight.  We assume that $\mathcal{K}_{C_0} = \mathcal{K}_{C_0,\Phi_0}$, defined in (\ref{eLowEnergyDef}), is equipped with the $H_{\Phi_0}$ norm.  First, assume that $z\in\Bbb{C}$ satisfies
\begin{equation}\label{eFiniteDimHyp1}
	\opnm{dist}(z,\opnm{Spec} \tilde{q}^w(x,hD_x)) \geq \frac{h^L}{C_1}.
\end{equation}
Then there exist implicit constants and $h_0 > 0$ sufficiently small where, for all $h \in (0,h_0]$, we have the following operator norm estimate
\[
	||(z-\tilde{q}^w(x,hD_x))^{-1}||_{\mathcal{L}(\mathcal{K}_{C_0,\Phi_0})} = \BigO(1)\jvexp \left(\BigO(1)h^{-1}\log\frac{1}{h}\right).
\]

If we assume instead that $F(q)$ is diagonalizable with no assumptions on $z \in \Bbb{C}$, we have
\[
	||(z-\tilde{q}^w(x,hD_x))^{-1}||_{\mathcal{L}(\mathcal{K}_{C_0,\Phi_0})} \leq \left(\opnm{dist}(z,\opnm{Spec} \tilde{q}^w(x,hD_x))\right)^{-1}
\]
\end{proposition}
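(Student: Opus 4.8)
The plan is to exploit the explicit normal form of Remark~\ref{rqtilde}. In the monomial basis of $H_{\Phi_0}$ the operator $\tilde{q}^w(x,hD_x)$ splits as a diagonal part $\tilde{q}_D^w$, with $\tilde{q}_D^w x^\alpha = \mu_\alpha x^\alpha$, plus a nilpotent part $\tilde{q}_N^w = \sum_{j=1}^{n-1}\gamma_j x_{j+1}hD_{x_j}$, which preserves total degree and hence acts block-diagonally on the finite-dimensional spaces $E_m = \opnm{span}\{x^\alpha : |\alpha| = m\}$. Since $\Phi_0 = |x|^2/2$ is radial, distinct monomials are orthogonal in $H_{\Phi_0}$, so I would first observe that $\mathcal{K}_{C_0,\Phi_0}$ from (\ref{eLowEnergyDef}) is the \emph{orthogonal} direct sum of the finitely many $\tilde{q}^w$-invariant spaces $E_m$ with $m \leq C_0 h^{-1}$. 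Consequently $(z-\tilde{q}^w(x,hD_x))^{-1}$ is block diagonal on $\mathcal{K}_{C_0,\Phi_0}$, its norm equals $\max_{m \leq C_0 h^{-1}} \|(z-\tilde{q}^w(x,hD_x)|_{E_m})^{-1}\|$, and each block resolvent exists because $\Spec(\tilde{q}^w(x,hD_x)|_{E_m}) = \{\mu_\alpha : |\alpha|=m\} \subseteq \Spec \tilde{q}^w(x,hD_x)$.

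For the second assertion I would note that $F(q)$ diagonalizable forces the Jordan matrix $M$ of Proposition~\ref{pNormalForm} to be diagonal, hence $\gamma_j \equiv 0$ and $\tilde{q}_N^w = 0$; then $\tilde{q}^w(x,hD_x)$ is diagonal in the orthogonal monomial basis, its restriction to $\mathcal{K}_{C_0,\Phi_0}$ is normal, and
\[
	\|(z-\tilde{q}^w(x,hD_x))^{-1}\|_{\mathcal{L}(\mathcal{K}_{C_0,\Phi_0})} = \max_{|\alpha|\leq C_0 h^{-1}} |z-\mu_\alpha|^{-1} \leq \bigl(\opnm{dist}(z,\Spec \tilde{q}^w(x,hD_x))\bigr)^{-1},
\]
which is the claimed inequality.

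The first assertion is then a single-block estimate. I would write $D_m$, $N_m$ for the restrictions of $\tilde{q}_D^w$, $\tilde{q}_N^w$ to $E_m$, and $d_m = \dim E_m = \binom{m+n-1}{n-1} = \BigO(h^{-(n-1)})$ uniformly for $m \leq C_0 h^{-1}$. Factoring $z - \tilde{q}^w|_{E_m} = (z-D_m)\bigl(I - (z-D_m)^{-1}N_m\bigr)$ gives
\[
	(z-\tilde{q}^w|_{E_m})^{-1} = \Bigl(\sum_{k\geq 0}\bigl((z-D_m)^{-1}N_m\bigr)^k\Bigr)(z-D_m)^{-1},
\]
and I would control the right side with three bounds: (i) $\|(z-D_m)^{-1}\| = \max_{|\alpha|=m}|z-\mu_\alpha|^{-1}\leq C_1 h^{-L}$ by hypothesis (\ref{eFiniteDimHyp1}); (ii) in the orthonormal basis $x^\alpha/\|x^\alpha\|_{H_{\Phi_0}}$, using the Gaussian computation $\|x^\alpha\|_{H_{\Phi_0}}^2 = \pi^n h^{n+|\alpha|}\alpha!$, the nonzero entries of $N_m$ equal $|\gamma_j|\,h\sqrt{\alpha_j(\alpha_{j+1}+1)} \leq C_0 + h$, so a Hilbert--Schmidt bound yields $\|N_m\| = \BigO(\sqrt{d_m}) = \BigO(h^{-(n-1)/2})$; and (iii) grading $E_m$ by $W(\alpha) = \sum_{i=1}^n i\alpha_i \in \{m,\dots,nm\}$, $N_m$ maps $\{W=w\}$ into $\{W=w+1\}$ while $(z-D_m)^{-1}$ preserves $W$, so $(z-D_m)^{-1}N_m$ is nilpotent of order at most $(n-1)m+1 = \BigO(h^{-1})$ and the series above truncates after $\BigO(h^{-1})$ terms. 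Multiplying the three bounds,
\[
	\|(z-\tilde{q}^w|_{E_m})^{-1}\| \leq \BigO(h^{-1-L})\bigl(1 + (C_1 h^{-L}\,\BigO(h^{-(n-1)/2}))^{\BigO(h^{-1})}\bigr),
\]
and taking logarithms collapses this, uniformly in $m$, to $\BigO(1)\jvexp(\BigO(1)h^{-1}\log(1/h))$; the maximum over blocks is the asserted estimate. This follows the argument of Section~4 of \cite{HiSjVi2011}.

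The step I expect to be the real content, and the source of the logarithmic loss, is (ii)--(iii) together: one must know that the nilpotency order of $(z-D_m)^{-1}N_m$ is only $\BigO(h^{-1})$, rather than the full block dimension $\BigO(h^{-(n-1)})$, and that $\|N_m\|$ grows merely polynomially in $h^{-1}$, so that the truncated geometric series contributes $\jvexp(\BigO(h^{-1})\cdot\BigO(\log(1/h)))$ and nothing worse. Both facts come directly from the explicit shift form of $\tilde{q}_N^w$ recorded in Remark~\ref{rqtilde}, so no analytic input beyond the reduction to normal form is required; the remaining work is the elementary Gaussian and combinatorial bookkeeping indicated above.
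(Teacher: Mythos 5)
Your proposal is correct and follows essentially the same route as the paper, which gives no independent proof but defers to Section 4 of \cite{HiSjVi2011}: the degree-preserving block structure, the upper-triangular (diagonal plus weight-shifting nilpotent) form of $\tilde{q}^w$ in the orthogonal monomial basis of $H_{\Phi_0}$, the finite Neumann series of length $\BigO(h^{-1})$ coming from the grading $W(\alpha)=\sum j\alpha_j$, and the polynomial bounds on $\|(z-D_m)^{-1}\|$ and $\|N_m\|$ are exactly the ingredients of that argument. Your reconstruction of the Gaussian normalization $\|x^\alpha\|_{H_{\Phi_0}}^2=\pi^n h^{n+|\alpha|}\alpha!$ and of the diagonalizable case is also accurate.
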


\begin{remark} In \cite{HiSjVi2011}, exponential upper bounds (with no logarithmic loss) could also be obtained from the assumption
\[
	\opnm{dist}(z,\opnm{Spec} \tilde{q}^w(x,hD_x)) \geq \frac{1}{C_1}.
\]
However, from the rescaling argument (\ref{eScaleHyp}) leading to the proof of Theorem \ref{tExtendToPE}, we would need to require
\[
	\opnm{dist}(z,\opnm{Spec} \tilde{q}^w(x,hD_x)) \geq \frac{h^{-2k_0}}{C_1}.
\]
In the non-elliptic case, where $k_0 > 0$, this combined with the assumption that $|z|\leq C_2$ makes the estimate vacuous for $h$ sufficiently small.
\end{remark}

We may then follow the proof of Theorem 1.1 in \cite{HiSjVi2011} to establish Theorem \ref{tExtendToPE} in the present work.  We begin by assuming our spectral parameter $\zeta \in \Bbb{C}$ obeys $|\zeta|\leq C_2h^{\delta}$, and we choose $C_0 > 0$ sufficiently large and $h_0 > 0$ sufficiently small that the conclusion of Proposition \ref{pPartiallyEllipticEstimate} holds. We henceforth consider only $h \in (0,h_0]$.

Recalling the definitions (\ref{eLowEnergyDef}) and (\ref{eHighEnergyDef}), write $\tau$ for the projection uniquely characterized by the decomposition of $H_{\Phi_2}$ into $\mathcal{K}_{C_0}\oplus \mathcal{H}_{C_0}$:
\[
	H_{\Phi_2} \ni u \mapsto (\tau u, (1-\tau)u) \in \mathcal{K}_{C_0}\oplus \mathcal{H}_{C_0}.
\]
As in Proposition 3.3 of \cite{HiSjVi2011}, we have the operator norm bounds
\begin{equation}\label{etauBounded}
	||\tau||_{\mathcal{L}(H_{\Phi_2})}, ||1-\tau||_{\mathcal{L}(H_{\Phi_2})} \leq Ce^{C/h}
\end{equation}
with constants depending on $\Phi_2$ and $C_0$.  (We will allow $C$ to change from line to line.)  Using analogous considerations which establish (4.13) in \cite{HiSjVi2011}, we have with $\Phi_0(x) = |x|^2/2$ the estimates
\[
	||u||_{H_{\Phi_2}}\leq Ce^{C/h}||u||_{H_{\Phi_0}}, \quad ||u||_{H_{\Phi_0}}\leq Ce^{C/h}||u||_{H_{\Phi_2}}, \quad \forall u \in \mathcal{K}_{C_0},
\]
where constants depend again only on $\Phi_2$ and $C_0$.

We introduce notation for the restriced resolvent norm, with norms in $H_{\Phi_0}$, which is the quantity bounded in Proposition \ref{pFiniteDim}:
\[
	B(\zeta;h) = ||(\tilde{q}^w(x,hD_x)-\zeta)^{-1}||_{\mathcal{L}(\mathcal{K}_{C_0,\Phi_0})}.
\]
Because $\tau$ and $\tilde{q}^w(x,hD_x)$ commute and because we have the exponential bounds in (\ref{etauBounded}), we see that
\[
	||\tau u||_{H_{\Phi_2}} \leq Ce^{C/h} B(\zeta;h) ||(\tilde{q}^w(x,hD_x)-\zeta)u||_{H_{\Phi_2}}.
\]
Using also the estimate from Proposition \ref{pPartiallyEllipticEstimate}, we have
\begin{eqnarray*}
	||(1-\tau)u||_{H_{\Phi_2}} \leq Ch^{-\delta}e^{C/h}||(\tilde{q}^w(x,hD_x)-\zeta)u||_{H_{\Phi_2}}.
\end{eqnarray*}
We combine these estimates to obtain
\[
	||u||_{H_{\Phi_2}}\leq Ce^{C/h}(B(\zeta;h)+h^{-\delta})||(\tilde{q}^w(x,hD_x)-\zeta)u||_{H_{\Phi_2}}.
\]

We then rescale using a change of variables like (\ref{eScaleOuthOp}).  Assuming $|z|\leq C_2$ and writing $\zeta = h^{\delta}z$, the above estimate provides that
\begin{eqnarray*}
	||u||_{H_{\Phi_2}} &\leq& Ce^{C/h}(B(\zeta;h)+h^{-\delta})||(\tilde{q}^w(x,hD_x)-h^{\delta}z)u||_{H_{\Phi_2}}
	\\ & = & h^{\delta}Ce^{C/h}(B(\zeta;h)+h^{-\delta})||(h^{-\delta}\tilde{q}^w(x,hD_x)-z)u||_{H_{\Phi_2}}.
\end{eqnarray*}
Writing $\tilde{h} = h^{1-\delta}$, a change of variables provides
\begin{equation}\label{eExtendToPENextToLast}
	||u||_{H_{\Phi_2}(\Bbb{C}^n;\tilde{h})} \leq h^{\delta}Ce^{C/h}(B(\zeta;h)+h^{-\delta})||(\tilde{q}^w(x,\tilde{h}D_x)-z)u||_{H_{\Phi_2}(\Bbb{C}^n;\tilde{h})}
\end{equation}

All that remains is to apply Proposition \ref{pFiniteDim} to bound $B(\zeta;h)$.  We will make $\tilde{h} = h^{1-\delta}$ our semiclassical parameter, and a change of variables shows that
\[
	\opnm{dist}(\zeta,\opnm{Spec} \tilde{q}^w(x,hD_x)) = h^{\delta}\opnm{dist}(z, \opnm{Spec} \tilde{q}^w(x,\tilde{h}D_x)).
\]
Our strategy will be to show that the hypotheses in Theorem \ref{tExtendToPE}, in terms of spectral parameter $z$ and semiclassical parameter $\tilde{h}$, imply conditions on $\zeta$ and $h$ which are sufficient to establish the hypotheses in Proposition \ref{pFiniteDim}.  Recalling that $\delta = 2k_0/(1+2k_0)$, we obtain the general rule that
\begin{multline}\label{eScaleHyp}
	\opnm{dist}(\zeta, \opnm{Spec} \tilde{q}^w(x,hD_x)) \geq f(h) 
	\\ \iff \opnm{dist}(z, \opnm{Spec} \tilde{q}^w(x,\tilde{h}D_x)) \geq \tilde{h}^{-2k_0} f(\tilde{h}^{1+2k_0}).
\end{multline}

  For $C_1, \tilde{L} > 0$, under the hypothesis
\begin{equation}\label{eUnscaledHyp1}
	\opnm{dist}(\zeta, \opnm{Spec}\tilde{q}^w(x,hD_x)) \geq \frac{h^{\tilde{L}}}{C_1},
\end{equation}
we have for $h$ sufficiently small
\begin{equation}\label{eExtendToPEh1}
	h^{\delta}Ce^{C/h}(B(\zeta;h)+h^{-\delta}) \leq \BigO(1)\jvexp \left(\BigO(1)h^{-1}\log\frac{1}{h}\right).
\end{equation}
From (\ref{eScaleHyp}), we see that (\ref{eUnscaledHyp1}) is equivalent to having
\[
	\opnm{dist}(z, \opnm{Spec} \tilde{q}^w(x,\tilde{h}D_x)) \geq \frac{\tilde{h}^{(1+ 2k_0)\tilde{L}-2k_0}}{C_1}.
\]
This is implied by the assumption
\[
	\opnm{dist}(z, \opnm{Spec} \tilde{q}^w(x,\tilde{h}D_x)) \geq \frac{\tilde{h}^L}{C_1},
\]
taken from Theorem \ref{tExtendToPE}, if we set $\tilde{L} = (L+2k_0)/(1+2k_0)$.

On the other hand, if we assume that $F(q)$ is diagonalizable and that
\begin{equation}\label{eUnscaledHyp2}
	\opnm{dist}(\zeta, \opnm{Spec}\tilde{q}^w(x,hD_x)) \geq \frac{1}{C_1}\jvexp(-\tilde{L}h^{-1}),
\end{equation}
we have for $h$ sufficiently small
\begin{equation}\label{eExtendToPEh2}
	h^{\delta}Ce^{C/h}(B(\zeta; h)+h^{-\delta}) \leq \BigO(1)\jvexp \left(\BigO(1)h^{-1}\right).
\end{equation}
Again applying (\ref{eScaleHyp}), we have that (\ref{eUnscaledHyp2}) is equivalent to
\[
	\opnm{dist}(z, \opnm{Spec} \tilde{q}^w(x,\tilde{h}D_x)) \geq \frac{\tilde{h}^{-2k_0}}{C_1}\jvexp(-\tilde{L}\tilde{h}^{-1-2k_0}).
\]
This follows for $h$ sufficiently small under the assumption
\[
	\opnm{dist}(z, \opnm{Spec} \tilde{q}^w(x,\tilde{h}D_x)) \geq \frac{1}{C_1}\jvexp(-L\tilde{h}^{-1-2k_0})
\]
so long as we choose any fixed $\tilde{L} > L$ to absorb the polynomially growing $\tilde{h}^{-2k_0}$.

Replacing $h$ with $\tilde{h}^{1+2k_0}$ in the upper bounds (\ref{eExtendToPEh1}) and (\ref{eExtendToPEh2}) completes the proof of Theorem \ref{tExtendToPE}.

\subsection{Computation of resolvent norms on energy shells}\label{ssNumerics}

In Examples \ref{exKFP1} and \ref{exJordan1} we have representatives of Theorem \ref{tOrthogonalProj} where one has an orthogonal decomposition into $\tilde{q}^w(x,hD_x)$-invariant subspaces.  As mentioned in the proof of Proposition \ref{pNormalForm}, it is easy to see that linear changes of variables leave
\[
	E_m := \opnm{span}\{x^\alpha\::\:|\alpha| = m\}
\]
invariant.  Similarly, if $\tilde{q}(x,\xi) = (M_1x)\cdot\xi$, then the $E_m$ are $\tilde{q}^w(x,hD_x)$-invariant, even if $M_1$ is not in Jordan normal form.  We will use this decomposition to illustrate the differences in resolvent norm behavior for partially elliptic and fully elliptic operators, when restricted to high-energy functions.

For this reason, we will analyze both examples acting on the space
\[
	H_{\Phi_1}(\Bbb{C}^n;h),\quad \Phi_1(x) = \frac{1}{4}|x|^2.
\]
Following Section 4 of \cite{HiSjVi2011}, it is easy to check that
\begin{equation}\label{evarphiDef}
	\varphi_\alpha(x):= (2\pi h)^{-n/2}\left(\alpha! (2h)^{|\alpha|}\right)^{-1/2}x^\alpha
\end{equation}
forms an orthonormal basis for $H_{\Phi_1}$.  In order to have simpler formulas, we use the notation that $\varphi_\alpha = 0$ if $\alpha_j < 0$ for some $j$.

It is then elementary that, if $\tilde{q}(x,\xi)$ is such that the orthogonal subspaces $E_m\subseteq H_{\Phi_1}$ are $\tilde{q}^w(x,hD_x)$-invariant, 
\[
	||(\tilde{q}^w(x,hD_x) -z)^{-1}||_{\mathcal{L}(H_{\Phi_1})} = \sup_{m \in\Bbb{N}_0} ||(q^w(x,hD_x)|_{E_m}-z)^{-1}||_{\mathcal{L}(E_m,||\cdot||_{H_{\Phi_1}})}.
\]
Furthermore, writing $Q_m$ as the matrix representation of $\tilde{q}^w(x,hD_x)|_{E_m}$ with respect to an $H_{\Phi_1}$-orthonormal basis, we have the restricted resolvent norm as the inverse of the smallest singular value:
\begin{multline*}
	||(q^w(x,hD_x)|_{E_m}-z)^{-1}||_{\mathcal{L}(E_m,||\cdot||_{H_{\Phi_1}})} 
	\\ = \left(\inf\{\sqrt{\lambda}\::\: \lambda \in \opnm{Spec}(Q_m-z)^*(Q_m-z)\}\right)^{-1}.
\end{multline*}

In Example \ref{exJordan1} we have the operator
\[
	\tilde{Q}_3(h) = 2i(x_1 hD_{x_1}+x_2 hD_{x_2}) + h + x_2 hD_{x_1}
\]
acting on $H_{\Phi_1}$.  A simple computation yields a bidiagonal matrix, represented with $\alpha = (\alpha_1,\alpha_2)$:
\[
	\tilde{Q}_3(h)\varphi_\alpha = h(2|\alpha|+1)\varphi_\alpha + \frac{h}{i}(\alpha_1(\alpha_2+1))^{1/2}\varphi_{(\alpha_1-1,\alpha_2+1)}.
\]

Turning to Example \ref{exKFP1}, from (\ref{eKFPPhi1}) we have that (\ref{eKFPOperator}) acting on $L^2(\Bbb{R}^2)$ is unitarily equivalent to
\[
	\tilde{Q}_{2,0}(h) = \frac{i}{\sqrt{2}}\left(v\cdot hD_x - x \cdot hD_v\right) + iv\cdot hD_v + \frac{h}{2}
\]
acting on $H_{\Phi_1}$. Therefore, with $\varphi_\alpha$ defined in (\ref{evarphiDef}) and $\alpha = (\alpha_1,\alpha_2)$, we have a tridiagonal matrix:
\begin{multline*}
	\tilde{Q}_{2,0}(h)\varphi_\alpha = \frac{h}{2}(2\alpha_2 + 1)\varphi_\alpha \\ + \frac{h}{\sqrt{2}}\left((\alpha_1(\alpha_2+1))^{1/2}\varphi_{(\alpha_1-1,\alpha_2+1)}-((\alpha_1+1)\alpha_2)^{1/2}\varphi_{(\alpha_1+1,\alpha_2-1)}\right).
\end{multline*}

In Figure \ref{f1}, we compare $||(\tilde{Q}(h)|_{E_m}-z)^{-1}||$ for $\tilde{Q}_{2,0}(h)$ at $z = 0.5+0.1i$ and $\tilde{Q}_3(h)$ at $z=2+0.4i$, taken as a function of the energy, $mh$.  We remark that the difference in $|z|$ is explained by the difference in the real part of the eigenvalues of the two operators.  For reference we include the harmonic oscillator with symbol $2ix\cdot\xi$ acting on $H_{\Phi_1}$, with resolvent norm taken at $z = 2+0.4i$.

It seems apparent that both operators have maximum resolvent norm at a bounded value of energy $mh$.  This is proven for non-normal elliptic operators in \cite{HiSjVi2011}, but the author knows no proof in the partially elliptic case.  There is also an apparent marked difference between the behavior after the energy passes this peak: in the elliptic case, the non-selfadjoint behavior is seen in an exponentially large peak, but for high energies the resolvent norm of the non-selfadjoint operator behaves quite similarly to that of the harmonic oscillator.  For the partially elliptic Kramers-Fokker-Planck operator, it seems that exponentially large resolvent norms persist for energies significantly larger than the energy at which the maximum resolvent norm occurs.

\begin{figure}
\caption{Log-log graph comparing resolvent norm vs.\ energy $mh$ for (left) a partially elliptic operator and (right) a fully elliptic operator for $h = 0.1, 0.05, 0.025$. Dotted lines are for the resolvent norm of a harmonic oscillator.}
\label{f1}
\centering
\includegraphics[scale=0.20]{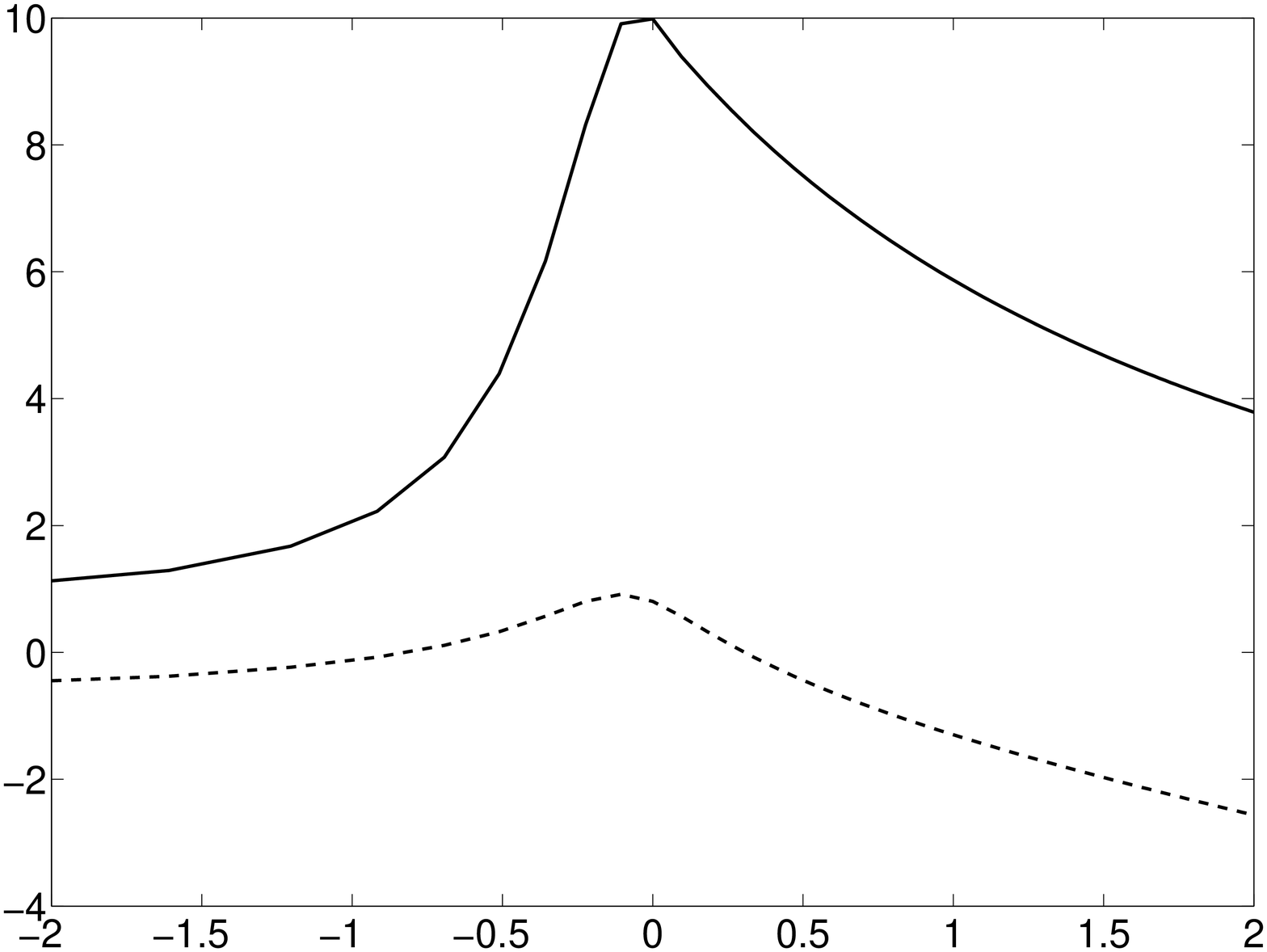}
\includegraphics[scale=0.20]{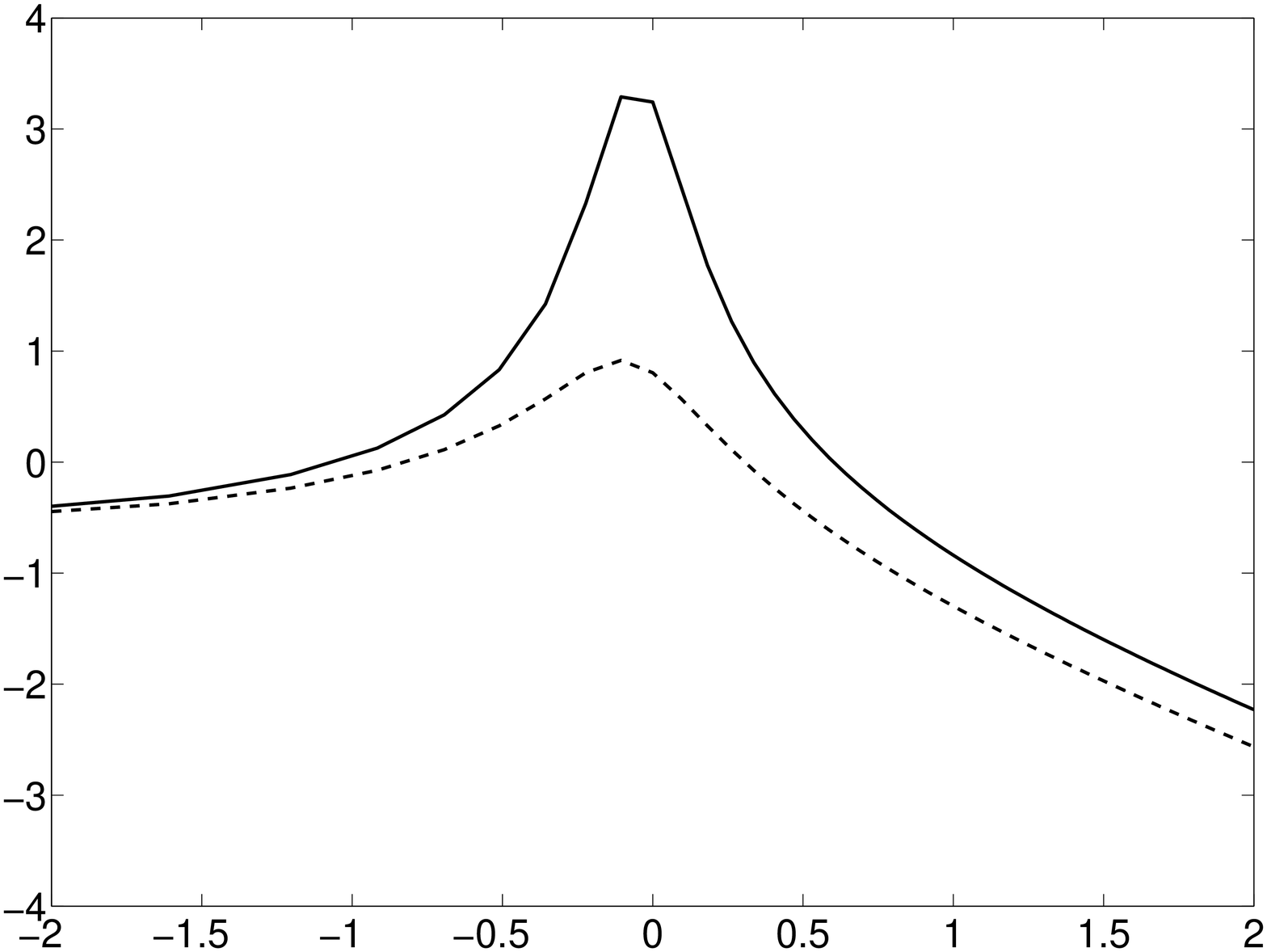}\\
\includegraphics[scale=0.20]{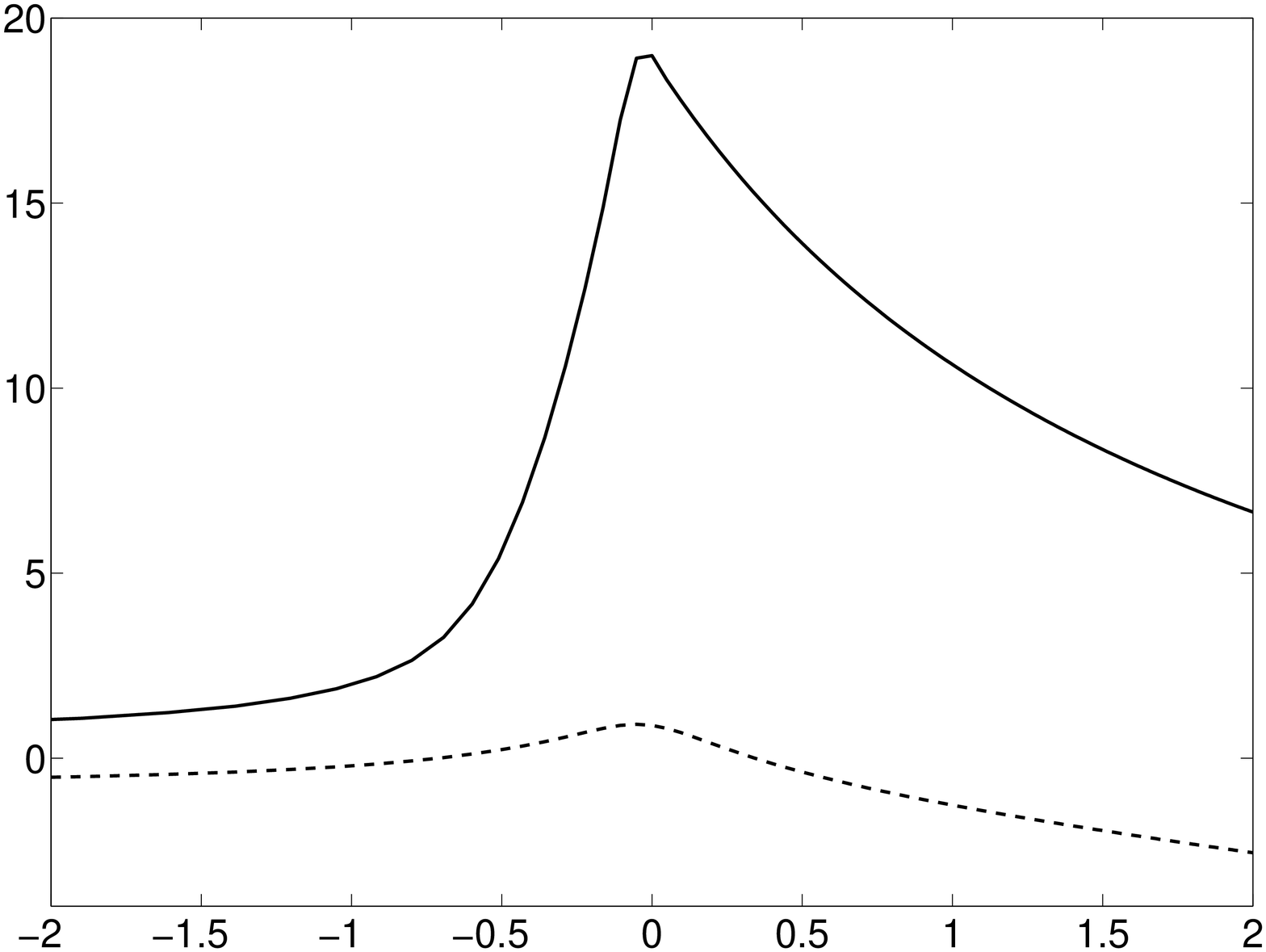}
\includegraphics[scale=0.20]{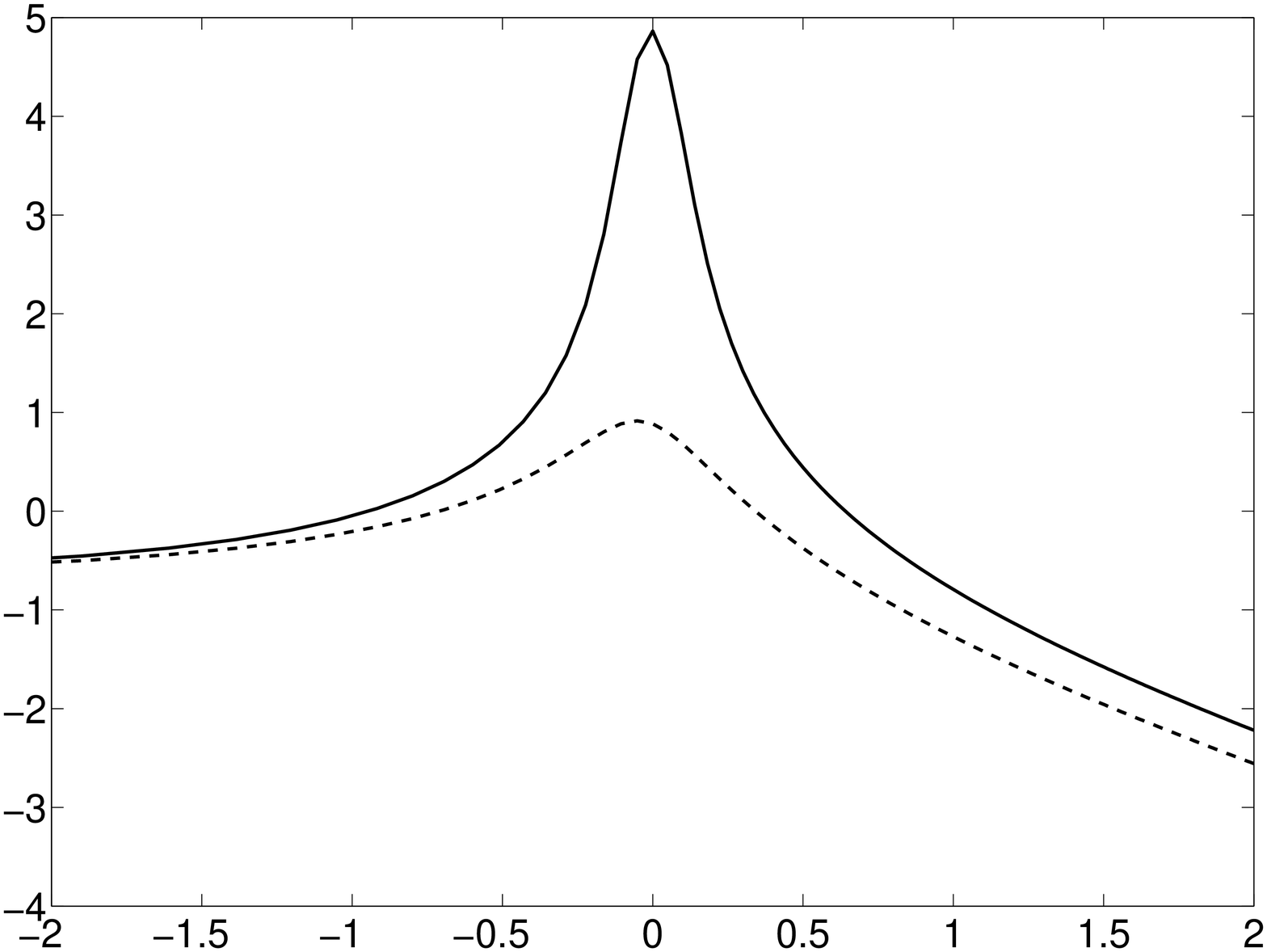}\\
\includegraphics[scale=0.20]{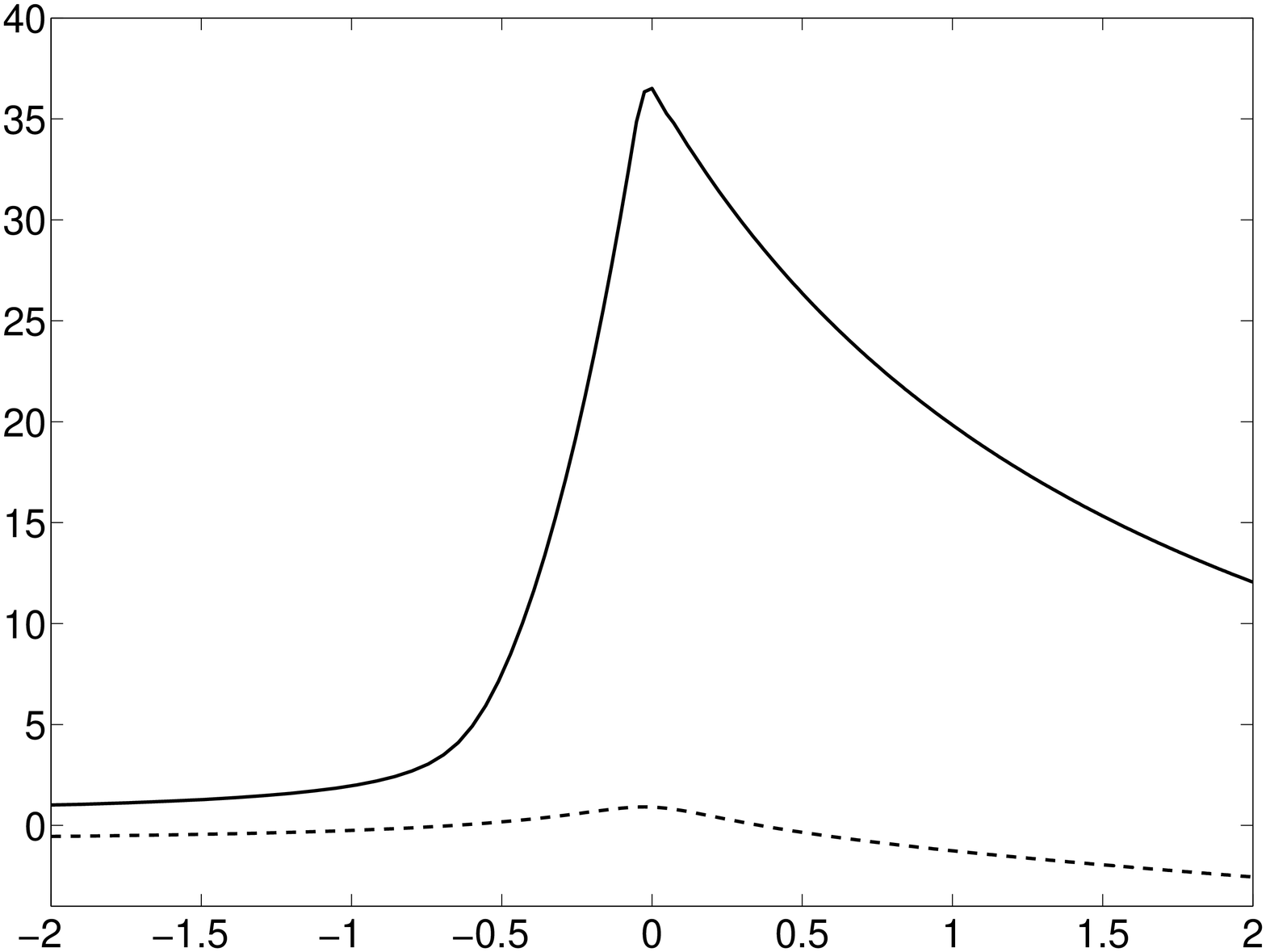}
\includegraphics[scale=0.20]{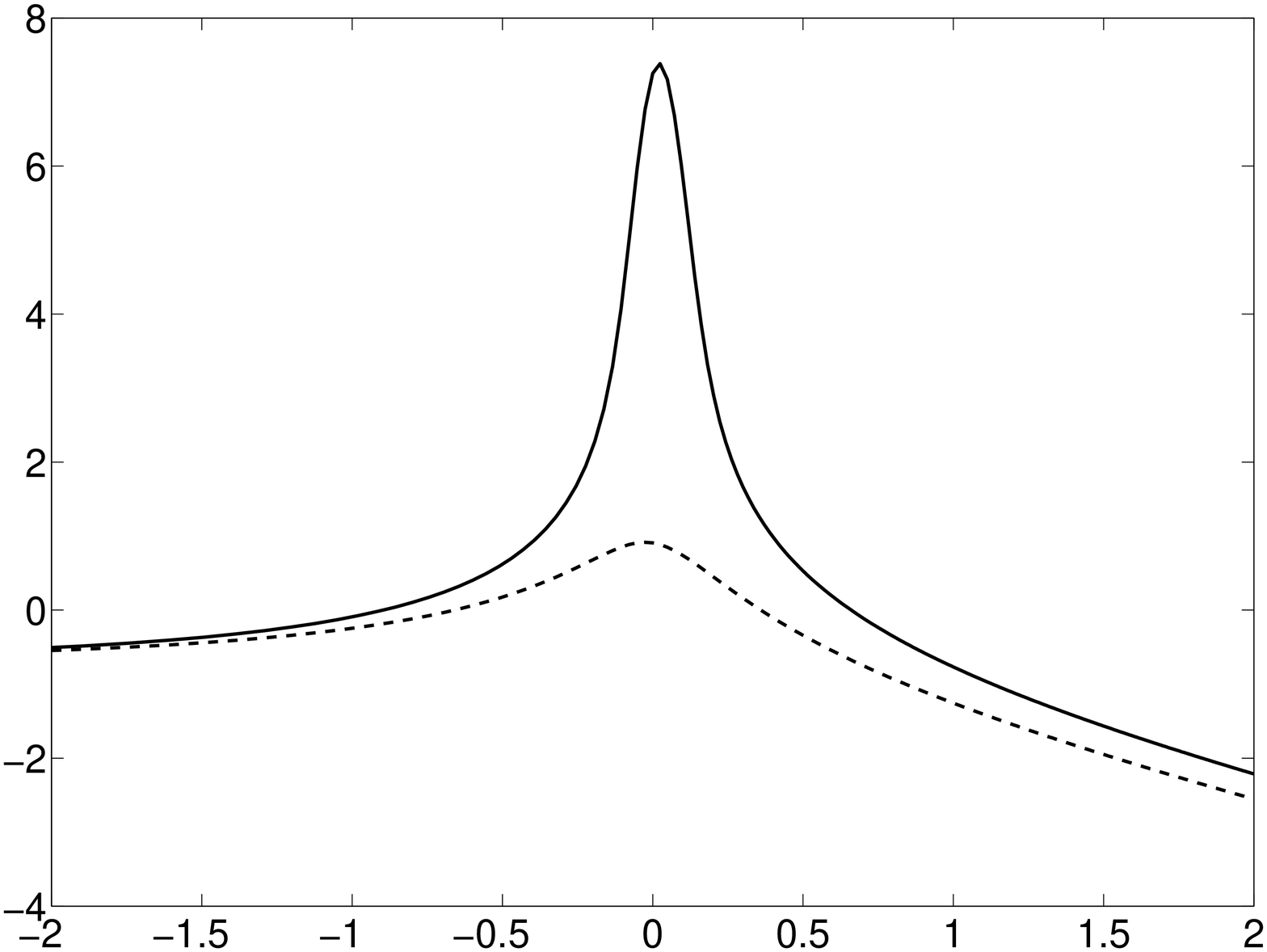}
\end{figure}

\section{Characterization of spectral projections}\label{sCharProj}

In Proposition \ref{pNormalForm}, we reduce our partially elliptic operators to a normal form whose action on polynomials is easy to describe.  In fact, the monomials form a basis of generalized eigenvectors for $\tilde{q}^w(x,hD_x)$ on the corresponding weighted space $H_{\Phi_2}$.

The goal of this section is to prove Theorem \ref{tCharacterizeProj}, which confirms that the spectral projections defined in (\ref{eSpectralProjection}) respect the natural Taylor series decomposition of $H_{\Phi_2}$ into monomials.  Following the methods of \cite{HiSjVi2011}, we then have an elementary exponential upper bound for the spectral projections.  We compare with the known rate of exponential growth for Example \ref{exDavies1}, discovered in \cite{DaKu2004}, to see that this elementary upper bound is in fact sharp for dimension 1.

\subsection{Characterization of the spectral projections}

We begin the proof of Theorem \ref{tCharacterizeProj} with two elementary lemmas.  First, a spectral projection respects a decomposition, even when not orthogonal, of a Hilbert space $\mathcal{H}$ into invariant subspaces of an operator $A$.

\begin{lemma}\label{lRieszProjDecomp}
Let $\mathcal{H}_1, \mathcal{H}_2$ be closed subspaces of a Hilbert space $\mathcal{H}$, complementary in the sense that $\mathcal{H} = \mathcal{H}_1 \oplus \mathcal{H}_2$.  We then have a unique decomposition for $v \in \mathcal{H}$ into $v = v_1 + v_2$ with $v_j \in \mathcal{H}_j$, and we will write $\pi_j v = v_j$.  Let $A$ be a closed densely defined operator on $\mathcal{H}$ such that $\mathcal{H}_1, \mathcal{H}_2$ are $A$-invariant subspaces.  Let $A_j = A|_{\mathcal{H}_j}$.  As in the assumptions for (\ref{eSpectralProjection}), let $\Omega \subseteq \Bbb{C}$ be such that $\opnm{Spec}A = \Omega \cup \Omega_2$, where $\Omega$ is contained in a bounded Cauchy domain $\Delta$ with $\overline{\Delta}\cap \Omega_2 = \emptyset$. Then
\begin{equation}\label{eRieszProjDecomp}
	P_{\Omega, A} = P_{\Omega, A_1}\pi_1 + P_{\Omega, A_2}\pi_2.
\end{equation}
\end{lemma}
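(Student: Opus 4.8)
The plan is to reduce the statement to the defining contour integral (\ref{eSpectralProjection}) for $P_{\Omega,A}$ by establishing a pointwise decomposition of the resolvent along $\Gamma$. First I record the easy structural facts. Since $\mathcal{H}_1$ and $\mathcal{H}_2$ are closed and complementary, $\pi_1$ and $\pi_2$ are bounded on $\mathcal{H}$ (closed graph theorem), with $\pi_1+\pi_2=\un$ and $\pi_j\mathcal{H}=\mathcal{H}_j$. The operators $A_j=A|_{\mathcal{H}_j}$, with $D(A_j)=D(A)\cap\mathcal{H}_j\subseteq D(A)$ and $Au=A_ju$ for $u\in D(A_j)$, are closed (a restriction of a closed operator to a closed subspace is closed); moreover, using that the decomposition reduces $A$, one has $\opnm{Spec}A_j\subseteq\opnm{Spec}A$. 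In particular $\Gamma\subseteq\rho(A)$ forces $\Gamma\subseteq\rho(A_1)\cap\rho(A_2)$, and since $\opnm{Spec}A_j\cap\Omega\subseteq\Delta$ while $\opnm{Spec}A_j\setminus\Omega\subseteq\Omega_2$ is disjoint from $\overline{\Delta}$, the integrals $(2\pi i)^{-1}\int_\Gamma(\zeta-A_j)^{-1}\,d\zeta$ are well defined and coincide with $P_{\Omega,A_j}$ in the sense of (\ref{eSpectralProjection}).

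The heart of the argument is the identity of bounded operators on $\mathcal{H}$
\[
	(\zeta-A)^{-1}=(\zeta-A_1)^{-1}\pi_1+(\zeta-A_2)^{-1}\pi_2,\qquad\zeta\in\Gamma,
\]
where $(\zeta-A_j)^{-1}$ acts on $\mathcal{H}_j$ and is applied to $\pi_jw\in\mathcal{H}_j$. I would prove this by direct verification: given $w\in\mathcal{H}$, put $u:=(\zeta-A_1)^{-1}\pi_1w+(\zeta-A_2)^{-1}\pi_2w$; each summand lies in $D(A_j)\subseteq D(A)$, so $u\in D(A)$, and since $A$ restricts to $A_j$ on $D(A_j)$ we get $(\zeta-A)(\zeta-A_j)^{-1}\pi_jw=(\zeta-A_j)(\zeta-A_j)^{-1}\pi_jw=\pi_jw$, hence $(\zeta-A)u=\pi_1w+\pi_2w=w$ and therefore $u=(\zeta-A)^{-1}w$. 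Note that this step uses only $\Gamma\subseteq\rho(A)\cap\rho(A_1)\cap\rho(A_2)$ together with the trivial inclusions $D(A_j)\subseteq D(A)$ and $A|_{D(A_j)}=A_j$; it does not require decomposing an arbitrary element of $D(A)$.

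With the resolvent identity in hand, (\ref{eRieszProjDecomp}) follows by integrating over $\Gamma$ and dividing by $2\pi i$, pulling the bounded operators $\pi_1,\pi_2$ out of the norm-convergent integrals and invoking the identification $(2\pi i)^{-1}\int_\Gamma(\zeta-A_j)^{-1}\,d\zeta=P_{\Omega,A_j}$ from the first paragraph. The one point that deserves care, and which I expect to be the main (if modest) obstacle, is justifying that the decomposition reduces $A$ strongly enough to give $\opnm{Spec}A_j\subseteq\opnm{Spec}A$ and hence that $\Gamma$ is an admissible contour for each $A_j$; equivalently, that $D(A)$, the resolvents, and the spectra all respect the splitting $\mathcal{H}=\mathcal{H}_1\oplus\mathcal{H}_2$. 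In the applications made of this lemma in the present paper the spaces $\mathcal{H}_j$ are (closed spans of) collections of monomials and $D(A)$ visibly splits as $D(A_1)\oplus D(A_2)$, so this is immediate there; in the general statement it is the hypothesis one must keep in force.
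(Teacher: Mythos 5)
Your overall route is the same as the paper's: decompose the resolvent pointwise on $\Gamma$ as $(\zeta-A)^{-1}=(\zeta-A_1)^{-1}\pi_1+(\zeta-A_2)^{-1}\pi_2$ and then integrate. The direct verification that $u:=(\zeta-A_1)^{-1}\pi_1w+(\zeta-A_2)^{-1}\pi_2w$ satisfies $(\zeta-A)u=w$ is fine, \emph{granted} that $(\zeta-A_j)^{-1}$ exists as a bounded operator on $\mathcal{H}_j$ for $\zeta\in\Gamma$ and that $\Gamma$ is an admissible contour for $A_j$. But that is exactly the point you leave open: you assert $\opnm{Spec}A_j\subseteq\opnm{Spec}A$ ``using that the decomposition reduces $A$,'' and then in your closing paragraph downgrade this to a standing hypothesis rather than deriving it from the stated assumptions. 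As the lemma is phrased, this is the one step that genuinely has to be proved, and your proposal does not prove it.

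The missing ingredient is the correct reading of ``$\mathcal{H}_j$ is $A$-invariant'' for an unbounded operator, which the paper makes explicit: it is equivalent to $[A,\pi_j]=0$, i.e.\ $\pi_jD(A)\subseteq D(A)$ and $A\pi_ju=\pi_jAu$ for all $u\in D(A)$. With this, for $\zeta\in\rho(A)$ the bounded operator $(\zeta-A)^{-1}$ commutes with $\pi_j$, hence maps $\mathcal{H}_j$ into $D(A)\cap\mathcal{H}_j=D(A_j)$ and is a two-sided bounded inverse of $\zeta-A_j$ there; this yields $\rho(A)\subseteq\rho(A_j)$, hence $\Gamma\subseteq\rho(A_1)\cap\rho(A_2)$ and the compatible splitting of $\opnm{Spec}A_j$ relative to $\Delta$ and $\Omega_2$. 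The weaker reading $A(D(A)\cap\mathcal{H}_j)\subseteq\mathcal{H}_j$, which is all your argument actually uses (you invoke only $D(A_j)\subseteq D(A)$ and $A|_{D(A_j)}=A_j$), is not sufficient: without the domain condition $\pi_jD(A)\subseteq D(A)$, the intersection $D(A)\cap\mathcal{H}_1$ could even be trivial, so $A_1$ need not be densely defined and $\zeta-A_1$ need not be surjective on $\mathcal{H}_1$ even though $\zeta-A$ is bijective. Once the commutation relation is in place, the remainder of your argument closes the proof exactly as the paper does.
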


\begin{proof}
By the Closed Graph Theorem and the fact that the $\mathcal{H}_j$ are closed, clearly the $\pi_j$ are continuous; we record the standard facts that $\pi_j^2 = \pi_j$, that $\pi_1 + \pi_2 = 1$, and that $\pi_1 \pi_2 = \pi_2 \pi_1 = 0$.  The statement that the $\mathcal{H}_j$ are $A$-invariant is equivalent to the statement that $[A, \pi_1] = [A, \pi_2] = 0$.  We see that, when $\zeta \notin \opnm{Spec}(A)$, the resolvent $(\zeta-A_j)^{-1}$ exists as a bounded linear operator on $\mathcal{H}_j$ by writing
\[
	(\zeta-A)^{-1}v = w \iff v = (\zeta-A)w
\]
and observing that $x = y$ if and only if both $\pi_1 x = \pi_1 y$ and $\pi_2 x = \pi_2 y$.  The same facts imply that
\begin{equation}\label{eResolventProjDecomp}
	(\zeta - A)^{-1} = (\zeta - A_1)^{-1}\pi_1 + (\zeta - A_2)^{-1}\pi_2,
\end{equation}
from which (\ref{eRieszProjDecomp}) immediately follows after integrating.
\end{proof}

\begin{remark} One may view (\ref{eRieszProjDecomp}) as part of a natural extension of the relation
\[
	A = A_1\pi_1 + A_2\pi_2
\]
to the functional calculus:
\[
	f(A) = f(A_1)\pi_1 + f(A_2)\pi_2.
\]
On the level of Taylor series, this follows immediately from noting that $A_j\pi_j = A\pi_j$ and using that $[\pi_j, A] = 0$ and $\pi_1\pi_2 = \pi_2\pi_1 = 0$.  For non-analytic $f$ satisfying appropriate hypotheses, this follows from (\ref{eResolventProjDecomp}) and the Helffer-Sj{\"o}strand formula (\cite{DiSjBook}, Theorem 8.1 and remarks thereafter).
\end{remark}

The finite-dimensional analysis in \cite{HiSjVi2011} summarized in Section \ref{ssNormalForm} rests on the fact that $\tilde{q}^w(x,hD_x)|_{\mathcal{K}_{C_0}}$, with $\mathcal{K}_{C_0}$ defined in (\ref{eLowEnergyDef}), resembles a matrix in Jordan normal form.  The following lemma establishes that such matrices have the usual spectral projections, given by picking out those basis vectors associated with the Jordan block corresponding to the eigenvalue.

\begin{lemma}\label{lJordanLike}
Let $\opnm{dim}\mathcal{H} = N < \infty$, let $A$ be an operator on $\mathcal{H}$, and let $\{\Bff{v}_j\}_{j=1}^N$ be a basis for $\mathcal{H}$ with respect to which the matrix $(a_{jk})_{j,k=1}^N$ representing $A$ is Jordan-like in the sense that $(a_{jk})$ is upper-triangular and $a_{jk} \neq 0 \implies a_{jj} = a_{kk}$.  Then, following the notation of (\ref{eSpectralProjection}),
\[
	P_{\{\zeta\}, A}(\alpha_1 \Bff{v}_1 + \dots + \alpha_N\Bff{v}_N) = \sum_{j\::\: a_{jj} = \zeta} \alpha_j \Bff{v}_j.
\]
\end{lemma}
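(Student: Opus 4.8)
The plan is to reduce the statement to Lemma \ref{lRieszProjDecomp} after grouping the basis vectors according to their associated diagonal entry. We may assume $\zeta \in \opnm{Spec}A = \{a_{11},\dots,a_{NN}\}$, since otherwise the right-hand side is an empty sum and both sides are the zero operator. Let $\lambda_1,\dots,\lambda_r$ enumerate the distinct values among $a_{11},\dots,a_{NN}$, and set
\[
	W_i = \opnm{span}\{\Bff{v}_j \::\: a_{jj} = \lambda_i\}, \qquad i = 1,\dots,r,
\]
so that $\mathcal{H} = W_1 \oplus \dots \oplus W_r$; write $\pi_i$ for the associated projection onto $W_i$ along the other summands.

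First I would check that each $W_i$ is $A$-invariant with $\opnm{Spec}(A|_{W_i}) = \{\lambda_i\}$. Since $(a_{jk})$ is upper-triangular, $A\Bff{v}_k = \sum_{j \leq k} a_{jk}\Bff{v}_j$, and the hypothesis $a_{jk}\neq 0 \implies a_{jj} = a_{kk}$ forces every surviving term to satisfy $a_{jj}=a_{kk}$; hence $A\Bff{v}_k \in W_i$ whenever $\Bff{v}_k \in W_i$. Moreover, in the basis $\{\Bff{v}_j \::\: a_{jj}=\lambda_i\}$ the restriction $A|_{W_i}$ is represented by an upper-triangular matrix all of whose diagonal entries equal $\lambda_i$, so its only spectral value is $\lambda_i$.

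Next I would apply Lemma \ref{lRieszProjDecomp}, extended by an evident induction from two complementary $A$-invariant subspaces to the $r$ summands $W_1,\dots,W_r$, taking $\Omega = \{\zeta\}$ and a small disc about $\zeta$ as the bounded Cauchy domain (this disc avoids the rest of $\opnm{Spec}A$, which is finite). This yields
\[
	P_{\{\zeta\}, A} = \sum_{i=1}^r P_{\{\zeta\}, A|_{W_i}}\,\pi_i.
\]
When $\lambda_i \neq \zeta$ the Cauchy domain contains no point of $\opnm{Spec}(A|_{W_i}) = \{\lambda_i\}$, so the integrand $(\eta - A|_{W_i})^{-1}$ is holomorphic inside and $P_{\{\zeta\}, A|_{W_i}} = 0$ by Cauchy's theorem. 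When $\lambda_i = \zeta$ the Cauchy domain encloses all of $\opnm{Spec}(A|_{W_i})$, so $P_{\{\zeta\}, A|_{W_i}}$ is the Riesz projection onto the entire spectrum of $A|_{W_i}$, which is the identity on $W_i$ (the operator Cauchy integral formula for the constant function $1$). Hence $P_{\{\zeta\}, A} = \pi_{i_0}$ where $\lambda_{i_0}=\zeta$, and evaluating on $\alpha_1\Bff{v}_1 + \dots + \alpha_N\Bff{v}_N$ gives exactly $\sum_{j\::\:a_{jj}=\zeta}\alpha_j\Bff{v}_j$.

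The only steps demanding any care are the verification of $A$-invariance of the $W_i$ directly from the index condition (immediate, as above) and the induction upgrading Lemma \ref{lRieszProjDecomp} to finitely many summands; I do not expect a genuine obstacle, since in this finite-dimensional setting all resolvents and contour integrals are unproblematic.
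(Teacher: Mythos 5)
Your proposal is correct and follows essentially the same route as the paper: the same decomposition of $\mathcal{H}$ into the spans $\opnm{span}\{\Bff{v}_j : a_{jj} = \lambda_i\}$, the same appeal to Lemma \ref{lRieszProjDecomp} (extended to finitely many summands), and the same reduction to computing the spectral projection of each block. The only cosmetic difference is that the paper evaluates the block contour integral explicitly via a finite Neumann series for $(\lambda - z - N)^{-1}$ with $N$ nilpotent, whereas you invoke the standard facts that a Riesz projection over a domain disjoint from the spectrum vanishes and over a domain containing the whole spectrum is the identity; both are equally valid here.
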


\begin{proof} The assumption that $a_{jk} \neq 0 \implies a_{jj} = a_{kk}$ implies that we may write $\mathcal{H}$ as a finite direct sum of the $A$-invariant subspaces
\[
	E_z = \opnm{span}\{\Bff{v}_j\::\: a_{jj} = z\}.
\]
In view of Lemma \ref{lRieszProjDecomp}, it suffices to show that
\[
	P_{\{\zeta\}, A|_{E_z}} = \left\{ \begin{array}{ll} 1_{E_z}, & \zeta = z \\ 0, & \zeta \neq z \end{array}\right..
\]
Restricted to each such subspace, we have
\[
	A|_{E_z} = z+N
\]
with $N$ nilpotent because $A$ is upper-triangular.  We expand the integrand in (\ref{eSpectralProjection}) in a finite Neumann series:
\[
	(\lambda - A|_{E_z})^{-1} = (\lambda - z)^{-1}+\sum_{j=1}^{\BigO(1)} (\lambda-z)^{-j-1}N^j.
\]
The lemma then follows from the elementary fact that, for $j \in \Bbb{N}$,
\[
	\int_{|\zeta - z| = \eps} (\lambda - z)^{-j}\,dz
\]
is zero for $\eps$ sufficiently small unless $j = 1$ and $\zeta = z$, in which case one obtains $2\pi i$.
\end{proof}

We combine these lemmas to form the characterization of the spectral projections for $\tilde{q}^w(x,hD_x)$ acting as an operator on $H_{\Phi_2}(\Bbb{C}^n;h)$ as in Proposition \ref{pNormalForm}, thus proving Theorem \ref{tCharacterizeProj}.  Because the spectral projections are continuous and polynomials are dense in $H_{\Phi_2}$ (following Remark \ref{rPolynomialsDense}), it suffices to compute a spectral projection for $u$ a polynomial.  Fixing such a $u$ and an $h>0$, we may then choose $C_0$ sufficiently large that $u \in \mathcal{K}_{C_0}$ defined in (\ref{eLowEnergyDef}).  We recall from \cite{HiSjVi2011} that $\mathcal{K}_{C_0}$ is $\tilde{q}^w(x,hD_x)$-invariant, and it then follows from Lemma \ref{lRieszProjDecomp} that spectral projections for $\tilde{q}^w(x,hD_x)$, acting on $u$, are identical to the spectral projections for $\tilde{q}^w(x,hD_x)|_{\mathcal{K}_{C_0}}$.

We recall the characterization of $\tilde{q}^w(x,hD_x)$ reviewed in Remark \ref{rqtilde}.  In particular, with respect to the basis $\{x^\alpha\}_{|\alpha|\leq C_0h^{-1}}$ for $\mathcal{K}_{C_0}$, we have that
\[
	\tilde{q}^w(x,hD_x)x^\alpha = \mu_\alpha x^\alpha + \sum_{j=1}^{n-1}\frac{\gamma_j h\alpha_j}{i} x^{\alpha'_j},
\]
writing $\alpha_j' \in \Bbb{N}_0^n$ for the multi-index obtained from $\alpha$ by decreasing $\alpha_j$ by 1 and increasing $\alpha_{j+1}$ by 1.

Because $\gamma_j = 0$ if $\lambda_{j+1}\neq \lambda_j$, we have that $q^w(x,hD_x)x^\alpha$ is a linear combination only of certain $x^{\alpha'_j}$ for which $\mu_{\alpha'_j} = \mu_\alpha$. In the language of Lemma \ref{lJordanLike}, this means that the matrix representation of $\tilde{q}^w(x,hD_x)$ acting on $\mathcal{K}_{C_0}$ with respect to the basis $\{x^\alpha\}_{|\alpha|\leq C_0h^{-1}}$ is Jordan-like so long as it is upper-triangular: that is to say, writing
\[
	\tilde{q}^w(x,hD_x)x^\alpha  = \sum_{\beta} a_{\beta\alpha} x^\beta,
\]
we have that $a_{\alpha \alpha} = \mu_\alpha$ for all $\alpha$ and that $a_{\beta \alpha} = 0$ if $\mu_\beta \neq \mu_\alpha$.  To ensure that the matrix of $\tilde{q}^w(x,hD_x)|_{\mathcal{K}_{C_0}}$ is upper triangular with respect to this basis, it suffices to equip the $\alpha$ with $|\alpha| \leq C_0h^{-1}$ with an ordering $\prec$ in such a way that $\alpha_j' \prec \alpha$.  Since the degree of a monomial is preserved by $\tilde{q}^w(x,hD_x)$, we only need to order the $\alpha$ with $|\alpha|$ fixed, and we do so by saying that
\begin{equation}\label{eMultiindexOrdering}
	\alpha \prec \beta \iff \sum_{j=1}^n j\alpha_j > \sum_{j=1}^n j\beta_j.
\end{equation}
Note that this simply reverses the ordering used in the proof of Lemma 4.1 in \cite{HiSjVi2011}.

Because each polynomial certainly has a unique expression as
\[
	u(x) = \sum_{|\alpha|\leq N} (\alpha!)^{-1}(\partial^\alpha u(0))x^\alpha,
\]
Theorem \ref{tCharacterizeProj} is therefore proven for any polynomial by applying Lemma \ref{lJordanLike} to $\tilde{q}^w(x,hD_x)$ acting on $\mathcal{K}_{C_0}$ with $C_0$ sufficiently large.  This extends to all of $H_{\Phi_2}$ via density of polynomials and the fact that
\[
	\partial^\alpha u(0) = 0
\]
is an $H_{\Phi_2}$-closed condition, as it is $L^2_{\rm loc}(\Bbb{C}^n)$ closed for entire functions and the $H_{\Phi_2}$ topology is finer.

\subsection{An elementary exponential upper bound}

We present the following minor extension of Proposition 3.3 in \cite{HiSjVi2011}.

\begin{lemma}\label{lMainBounds}
Let $\Phi:\Bbb{C}^n \rightarrow \Bbb{R}$ be strictly convex, real-valued, and quadratic.   Let $C_\ell, C_u > 0$ be such that 
\[
	C_\ell |x|^2 \leq \Phi(x) \leq C_u|x|^2.
\]
For $S \subseteq \Bbb{N}_0^n$ a finite collection of multi-indices, define $M = \max_{\alpha \in S}|\alpha|$.  Write
\[
	\tau_S u(x) = \sum_{\alpha \in S} (\alpha!)^{-1}(\partial^\alpha u(0)) x^\alpha.
\]
Then
\[
	||\tau_S||_{\mathcal{L}(H_\Phi)} \leq \left(\frac{C_u}{C_\ell}\right)^{\frac{n+M}{2}}.
\]
\end{lemma}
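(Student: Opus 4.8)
The plan is to compare the non-radial weight $\Phi$ with the two radial weights $\Phi_\ell(x) = C_\ell|x|^2$ and $\Phi_u(x) = C_u|x|^2$. In a radial weight the monomials $\{x^\alpha\}$ are pairwise orthogonal and $\tau_S$ acts as an orthogonal coordinate projection of norm $1$; the entire content of the lemma is that the cost of straightening the weight is the factor $(C_u/C_\ell)^{(n+M)/2}$.

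First I would record the pointwise bounds $\Phi_\ell \le \Phi \le \Phi_u$, which by the definition (\ref{eHPhiDef}) of these spaces give the norm comparisons $\|v\|_{H_{\Phi_u}} \le \|v\|_{H_\Phi} \le \|v\|_{H_{\Phi_\ell}}$ for every entire $v$. Given $u \in H_\Phi$, write the Taylor expansion $u = \sum_\alpha a_\alpha x^\alpha$ with $a_\alpha = (\alpha!)^{-1}\partial^\alpha u(0)$, so that $\tau_S u = \sum_{\alpha\in S} a_\alpha x^\alpha$. Applying the comparisons at the two ends gives
\[
\|\tau_S u\|_{H_\Phi} \le \|\tau_S u\|_{H_{\Phi_\ell}}, \qquad \|\tau_S u\|_{H_{\Phi_u}} \le \|u\|_{H_{\Phi_u}} \le \|u\|_{H_\Phi},
\]
where the estimate $\|\tau_S u\|_{H_{\Phi_u}} \le \|u\|_{H_{\Phi_u}}$ uses orthogonality of monomials in the radial weight $H_{\Phi_u}$ to discard the terms with $\alpha \notin S$.

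It then remains to compare the two radial norms of the polynomial $\tau_S u$, which has degree at most $M$. Here I would invoke the explicit monomial norm — the computation leading to (\ref{eLowEnergyLowerBound}), or equivalently the one-dimensional Gaussian integrals already carried out in the proof of Lemma \ref{lEllipticSpectralProj} — namely $\|x^\alpha\|_{H_{C|x|^2}}^2 = \pi^n \alpha!\,(h/(2C))^{n+|\alpha|}$. This yields the termwise ratio
\[
\frac{\|x^\alpha\|_{H_{\Phi_\ell}}^2}{\|x^\alpha\|_{H_{\Phi_u}}^2} = \left(\frac{C_u}{C_\ell}\right)^{n+|\alpha|} \le \left(\frac{C_u}{C_\ell}\right)^{n+M}, \qquad \alpha \in S,
\]
since $C_\ell \le C_u$ forces the base to be $\ge 1$ and $|\alpha| \le M$ for $\alpha \in S$. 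Summing against $|a_\alpha|^2$ over $\alpha \in S$, and again using monomial orthogonality in both radial weights, gives $\|\tau_S u\|_{H_{\Phi_\ell}}^2 \le (C_u/C_\ell)^{n+M}\|\tau_S u\|_{H_{\Phi_u}}^2$. Chaining this with the two displayed inequalities produces $\|\tau_S u\|_{H_\Phi} \le (C_u/C_\ell)^{(n+M)/2}\|u\|_{H_\Phi}$, i.e.\ the asserted bound on $\|\tau_S\|_{\mathcal{L}(H_\Phi)}$.

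The argument is an entirely elementary sandwich, so I do not expect a genuine obstacle; the only point needing a little care is the passage from polynomials to general $u \in H_\Phi$. I would handle this by first proving the bound for polynomials $u$ and then extending $\tau_S$ by density (Remark \ref{rPolynomialsDense}), noting that the extension still agrees with the stated formula because $u \mapsto \partial^\alpha u(0)$ is continuous on $H_\Phi$; alternatively one observes directly that every $u \in H_\Phi$ already lies in $H_{\Phi_u}$ and hence has an $\ell^2$-convergent monomial expansion to which all of the above manipulations apply verbatim.
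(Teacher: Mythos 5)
Your argument is correct and is essentially the paper's own proof: both sandwich $\Phi$ between the radial weights $C_\ell|x|^2$ and $C_u|x|^2$, use orthogonality and the explicit monomial norms $\pi^n\alpha!\,(h/(2C))^{n+|\alpha|}$ in those radial spaces, bound the termwise ratio by $(C_u/C_\ell)^{n+M}$, and finish by density of polynomials. The only difference is organizational (you chain four inequalities where the paper takes a single ratio), so there is nothing to add.
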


\begin{proof}
Note that polynomials are dense in $H_\Phi$; see Remark \ref{rPolynomialsDense}.  It therefore suffices to consider $u \in H_\Phi$ a polynomial, since continuity of $\tau_S$ follows from the closed graph theorem.  Replacing $1/C_1$ with $C_\ell$ turns (3.12) of \cite{HiSjVi2011} into
\[
	||\tau_S u||^2_{H_\Phi} \leq \sum_{\alpha \in S} |a_\alpha|^2 \left(\frac{h}{2 C_\ell}\right)^{n+|\alpha|} \pi^n \alpha!,
\]
and similarly (3.14) becomes
\[
	||u||^2_{H_\Phi} \geq \sum_{\alpha \in S} |a_\alpha|^2 \left(\frac{h}{2 C_u}\right)^{n+|\alpha|} \pi^n \alpha!.
\]
When examining the ratio $||\tau_S u||^2_{H_\Phi}/||u||^2_{H_\Phi}$, we may factor out $(C_u/C_\ell)^{n+M}$.  This replaces the $C_\ell^{-n-|\alpha|}$ in the numerator with $C_\ell^{M-|\alpha|}$ and likewise for $C_u^{-n-|\alpha|}$ in the denominator.  Since $M-|\alpha| \geq 0$ and $C_u \geq C_\ell$, the conclusion follows after taking a square root.
\end{proof}

This can be immediately applied to the spectral projections for $\tilde{q}^w(x,hD_x)$: because
\[
	\jvRe \mu_\alpha \geq (2|\alpha|+n)h\min\jvIm\lambda_j \geq \frac{h}{C}|\alpha|,
\]
those $\alpha$ appearing in the expression (\ref{eCharacterizeProj}) for $\Pi_{\zeta_0}$ have modulus bounded by $Ch^{-1}$, with constant depending only on $\tilde{q}$ and an upper bound for $|\zeta_0|$.  This establishes the following corollary.

\begin{corollary}\label{cElementaryExponential}
With $\Pi_{z}$ the spectral projection for $\tilde{q}^w(x,hD_x)$ and $\{z\}$ as in Theorem \ref{tCharacterizeProj},
\[
	||\Pi_z||_{\mathcal{L}(H_\Phi)} \leq Ce^{C/h}.
\]
(The constants may be made uniform for $z \in \Omega$ so long as $\Omega \subset \Bbb{C}$ is bounded.)
\end{corollary}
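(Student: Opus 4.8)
The plan is to read $\Pi_z$ off from Theorem \ref{tCharacterizeProj} and estimate it directly with Lemma \ref{lMainBounds}, the only extra ingredient being control on the degrees of the monomials that can occur. By Theorem \ref{tCharacterizeProj}, the projection $\Pi_z$ coincides with the Taylor-truncation operator $\tau_S$ of Lemma \ref{lMainBounds} for the set of multi-indices
\[
    S = \{\alpha \in \Bbb{N}_0^n \::\: \mu_\alpha = z\},
\]
so it suffices to bound $M := \max_{\alpha \in S}|\alpha|$ in terms of $h^{-1}$ and then invoke the lemma.

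First I would record the lower bound on $\jvRe \mu_\alpha$ noted just before the statement: since $\lambda_1,\dots,\lambda_n$ are the eigenvalues of $F(q)$ in the open upper half-plane, $\min_j \jvIm \lambda_j > 0$, and from (\ref{emuDef}),
\[
    \jvRe \mu_\alpha = h\sum_{j=1}^n (2\alpha_j+1)\jvIm \lambda_j \;\geq\; h(2|\alpha|+n)\min_j \jvIm \lambda_j \;\geq\; \frac{h}{C}|\alpha|
\]
for a constant $C$ depending only on $q$. Hence $\mu_\alpha = z$ forces $|\alpha| \leq C|z|h^{-1}$; in particular $S$ is finite and $M \leq C|z|h^{-1}$.

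Then I would apply Lemma \ref{lMainBounds} to the strictly convex real-valued quadratic $\Phi$ at hand: fixing $C_\ell, C_u > 0$ with $C_\ell|x|^2 \leq \Phi(x) \leq C_u|x|^2$, the lemma gives
\[
    ||\Pi_z||_{\mathcal{L}(H_\Phi)} = ||\tau_S||_{\mathcal{L}(H_\Phi)} \leq \left(\frac{C_u}{C_\ell}\right)^{(n+M)/2} \leq \left(\frac{C_u}{C_\ell}\right)^{n/2} e^{\frac{C|z|}{2h}\log(C_u/C_\ell)},
\]
which is of the form $Ce^{C/h}$ after relabelling constants. For the uniformity claim, one replaces $|z|$ by $\sup_{z\in\Omega}|z|$ in the exponent, the remaining constants $C_\ell$, $C_u$ and $\min_j \jvIm\lambda_j$ being independent of $z$.

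I do not expect a real obstacle here: the substance is already contained in Theorem \ref{tCharacterizeProj} and Lemma \ref{lMainBounds}, and the only point needing (minor) care is carrying the dependence on $|z|$ through the exponent so as to obtain the stated uniformity over bounded $\Omega$.
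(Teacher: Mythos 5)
Your proof is correct and follows essentially the same route as the paper: the paper likewise bounds $\jvRe\mu_\alpha \geq \frac{h}{C}|\alpha|$ to conclude that the multi-indices appearing in (\ref{eCharacterizeProj}) satisfy $|\alpha| \leq Ch^{-1}$ (with constant depending on an upper bound for $|z|$), and then applies Lemma \ref{lMainBounds}. The only difference is that you spell out the final exponentiation and the uniformity over bounded $\Omega$, which the paper leaves implicit.
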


\begin{example}\label{exDavies2}
We recall from Example \ref{exDavies1} that $q_1(x,\xi)$ is diagonalized on $H_{\Phi_1}(\Bbb{C}^1)$ for 
\[
	\Phi_1(x) = \frac{1}{4}(|x|^2 - \jvRe (x,C_+x)), \quad C_+ = \frac{1}{2}(-1+e^{-4i\theta}).
\]
Notice that
\begin{equation}\label{eAbsCplusDavies}
	|C_+| = \sqrt{\frac{1}{4}(2-2\cos 4\theta)} = |\sin 2\theta|.
\end{equation}
The principal advantage of the estimate in Lemma \ref{lMainBounds} is that $C_\ell$ and $C_u$ are easy to compute as the least and greatest eigenvalues of the $2n \times 2n$ real symmetric matrix $\frac{1}{2}\nabla^2_{\jvRe x,\jvIm x}\Phi$.  In this one-dimensional example, and with $S = \{N\}$, we easily compute that
\[
	\nabla^2_{\jvRe x,\jvIm x}\Phi_1 = \frac{1}{2}\left(\begin{array}{cc} 1-\jvRe C_+ & \jvIm C_+ \\ \jvIm C_+ & 1+\jvRe C_+\end{array}\right)
\]
with eigenvalues $(1\pm |C_+|)/2$ and therefore, using (\ref{eAbsCplusDavies}),
\begin{equation}\label{eElementaryDaviesBound}
	||\tau_{\{N\}}||_{\mathcal{L}(H_{\Phi_1})} \leq \left(\frac{1+|\sin 2\theta|}{1-|\sin 2\theta|}\right)^{\frac{n+N}{2}}.
\end{equation}
In view of Theorem \ref{tCharacterizeProj} and the unitary equivalence in Proposition \ref{pNormalForm} described in Example \ref{exDavies1}, the same upper bound holds for the operator norm of the spectral projection
\[
	\Pi_N := P_{q_1^w(x,D_x),\{2N+1\}}:L^2(\Bbb{R}^1)\rightarrow L^2(\Bbb{R}^1)
\]
using the definition in (\ref{eSpectralProjection}).

The other notable property of the estimate for this operator is that this rate of exponential growth is optimal, which is proven in \cite{DaKu2004} and reaffirmed in Corollary \ref{cDim1Asymptotic}.  Because the formulas are not obviously identical, we indicate the necessary computations to show equality.

In Theorem 3 of \cite{DaKu2004} (with the necessary changes of notation), Davies and Kuijlaars proved the exponential rate of growth
\[
	\lim_{N\rightarrow\infty} N^{-1}\log ||\Pi_N|| = 2\jvRe[f(r(\theta)e^{i\theta})]
\]
with
\[
	f(z) = \log(z+(z^2-1)^{1/2})-z(z^2-1)^{1/2}
\]
and
\[
	r(\theta) = (2\cos(2\theta))^{-1/2}.
\]
Here, we are taking $0 < \theta < \pi/4$.

First, with $\zeta = r(\theta)e^{i\theta}$,
\begin{multline*}
	\zeta^2 - 1 = (2\cos 2\theta)^{-1}(\cos 2\theta + i\sin 2\theta) - 1  \\ = (2\cos 2\theta)^{-1}(-\cos 2\theta + i\sin 2\theta) = -\frac{e^{-2i\theta}}{2\cos 2\theta}.
\end{multline*}
Recalling the definition of $r(\theta)$, we therefore have $(\zeta^2 - 1)^{1/2} = \pm ir(\theta)e^{-i\theta}$.  We expand
\[
	f(r(\theta)e^{i\theta}) = \log(r(\theta)e^{i\theta} \pm ir(\theta)e^{-i\theta}) \mp ir(\theta)^2.
\]
We are taking a real part of $f(r(\theta)e^{i\theta})$, so the second term is irrelevant.  Since $2\jvRe \log u = \log |u|^2$, we obtain, again using the definition of $r(\theta)$, that
\[
	2\jvRe[f(r(\theta)e^{i\theta}))] = -\log (2\cos 2\theta) + \log\left(|e^{i\theta} \pm ie^{-i\theta}|^2\right) = \log\left(\frac{1\pm \sin 2\theta}{\cos 2\theta}\right).
\]
The fact that projections should grow exponentially quickly indicates that the positive branch is the right choice.  One may then easily check that
\[
	\left(\frac{1 + \sin 2\theta}{\cos 2\theta}\right)^2 = \frac{1+\sin 2\theta}{1-\sin 2\theta}
\]
for $0 < \theta < \pi/4$, indicating that (\ref{eElementaryDaviesBound}) is optimal there.  That (\ref{eElementaryDaviesBound}) is optimal for $\theta = 0$ is obvious from the fact that 1 is the norm of any spectral projection for a normal operator, and the extension to $-\pi/4<\theta<0$ is easily seen by using the Fourier transform to interchange $x$ and $\xi$ in the definition (\ref{eDaviesSymbol}) of $q_1(x,\xi)$.

\end{example}

\begin{remark}\label{rNotSharp}
It is clear that neither Corollary \ref{cElementaryExponential} nor Corollary \ref{cUpperBound} should be sharp in general in higher dimensions. Consider some $0 < r_1 \leq r_2 \leq \dots \leq r_n$ and
\[
	\Phi(x) = \frac{1}{4}\sum_{j=1}^n r_j|x_j|^2,
\]
and note that it is easy to check using polar coordinates that $\{x^\alpha\}_{\alpha \in \Bbb{N}_0^n}$ form an orthogonal basis for $H_\Phi$.  Therefore, normalizing the $x^\alpha$, we see that $\tilde{q}^w(x,hD_x)$ is orthonormally diagonalizable and therefore normal whenever $\tilde{q}(x,\xi) = (Mx)\cdot \xi$ for $M$ diagonal.  (Alternately, $\Phi$ may be viewed as coming from the standard weight $\Phi_0(x) = |x|^2/4$ after an anisotropic change in semiclassical parameter similar to (\ref{eScaleOuthOp}).)

However, both Corollary \ref{cElementaryExponential} and Corollary \ref{cUpperBound} provide an exponentially growing upper bound like $(r_n/r_1)^{|\alpha|}$.  Since normal operators have orthogonal spectral projections, we see that these estimates are not necessarily sharp.
\end{remark}

\section{Dual bases for projections onto monomials}\label{sDualBases}

Following the methods of \cite{DaKu2004}, a continuous projection $\Pi$ onto a one-dimensional subspace $\opnm{span}\psi$ of a Hilbert space $\mathcal{H}$ may be analyzed by treating the resulting coefficient of $\psi$ as a continuous linear functional on $\mathcal{H}$.  Therefore there exists some unique $\psi^\dagger$ where
\[
	\Pi u = \langle u, \psi^\dagger\rangle \psi, \quad \forall u \in \mathcal{H}.
\]
The operator norm of $\Pi$ may then be computed from $\psi$ and $\psi^\dagger$:
\[
	||\Pi||_{\mathcal{L}(\mathcal{H})} = ||\psi||\,||\psi^\dagger||.
\]

When the ranges of spectral projections described in Theorem \ref{tCharacterizeProj} have higher dimension, formulas for $||\Pi||$ like the one above are unavailable.  Example \ref{exJordan1} demonstrates that one may have a weighted space where the natural projections onto monomials grow exponentially quickly, yet spectral projections associated with a quadratic operator acting on that weighted space may be orthogonal.  We therefore focus on the simple one-dimensional case
\[
	\Pi_\alpha u(x) = (\alpha!)^{-1}(\partial^\alpha u(0))x^\alpha : H_\Phi\rightarrow H_\Phi.
\]

We are interested in those $\Phi$ which can be obtained from Proposition \ref{pNormalForm}, but we begin by observing that it is equivalent to assume that $\Phi$ is real-valued, quadratic, and strictly convex.  Next, we write a elementary formula describing the family $\{\phi^\dagger_\alpha\}$, which is defined by the relations
\[
	\langle x^\alpha, \phi^\dagger_\beta\rangle_{H_\Phi} = \delta_{\alpha\beta},
\]
in terms of adjoints on $H_\Phi$, and we derive formulas for these adjoints.  After relating these $\{\phi^\dagger_\alpha\}$ to the eigenfunctions of $q^w(x,hD_x)^*$, we are in a position to prove Theorem \ref{tOrthogonalProj}.  Finally, we describe how $\{\phi^\dagger_\alpha\}$ are unitarily equivalent to $\{x^\alpha\}$ after a change of weight, thus proving Theorem \ref{tBoundProj}.

\subsection{Reversibility of the reduction to normal form}\label{ssReversibility}

In Section \ref{ssNormalForm}, we have a reduction to an operator on $H_{\Phi_2}(\Bbb{C}^n;h)$ for $\Phi_2$ a strictly convex real-valued quadratic weight described in (\ref{ePhi2Matrix}).  Here, we show that every strictly convex real-valued quadratic weight $\Phi$ may be written as $\Phi_2$ for some appropriate $G$ and $C_+$.

We begin with some general facts about real-valued quadratic forms and a useful decomposition thereof.  We may write
\begin{equation}\label{ePhiDecompose}
	\Phi(x) = \frac{1}{2}(x,\Phi_{xx}'' x) + (x, \Phi''_{x\bar{x}}\bar{x}) + \frac{1}{2}(\bar{x},\Phi''_{\bar{x}\bar{x}}\bar{x}).
\end{equation}
That $\Phi$ is real-valued is equivalent to the two statements
\[
	\overline{\Phi''_{xx}} = \Phi''_{\bar{x}\bar{x}}, \quad (\Phi''_{x\bar{x}})^* = \Phi''_{x\bar{x}}.
\]

It is natural (see for instance \cite{CaGrHiSj2012}, Appendix A) to decompose $\Phi$ into Hermitian and pluriharmonic parts, obtaining $\Phi = \Phi_{\textnormal{herm}}+\Phi_{\textnormal{plh}}$ with
\begin{equation}\label{ePhihermDef}
	\Phi_{\textnormal{herm}}(x) = (x,\Phi''_{x\bar{x}}\bar{x}) = \frac{1}{2}(\Phi(x) + \Phi(ix))
\end{equation}
and
\begin{equation}\label{ePhiplhDef}
	\Phi_{\textnormal{plh}}(x) = \jvRe(x,\Phi_{xx}'' x) = \frac{1}{2}(\Phi(x) - \Phi(ix)).
\end{equation}
We also note that strict convexity of $\Phi$ implies strict plurisubharmonicity of $\Phi$, which is in the quadratic case case equivalent to the strict convexity of $\Phi_{\textnormal{herm}}$.  This in turn is equivalent to having $\Phi''_{\bar{x}x}$ be a positive definite Hermitian matrix.

We now prove reversibility of Proposition \ref{pNormalForm}, summarized in the following proposition.

\begin{proposition}\label{pReversibility}
Let $\Phi:\Bbb{C}^n\rightarrow\Bbb{R}$ be a strictly convex real-valued quadratic weight.  Then there exists some $G$ and $C_+$, with $C_+$ given by (\ref{eCplusDef}) for some symmetric $A_+$ with $\jvIm A_+ > 0$, for which $\Phi = \Phi_2$ as in (\ref{ePhi2Matrix}).
\end{proposition}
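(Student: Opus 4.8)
The plan is to use the parametrization, recalled in this section, of a real-valued quadratic form $\Phi$ by the pair of matrices $(\Phi''_{xx}, \Phi''_{\bar x x})$, together with the explicit shape of $\Phi_2$ in (\ref{ePhi2Matrix}). It therefore suffices to exhibit an invertible $G \in \Bbb{C}^{n\times n}$ and an \emph{admissible} $C_+$ — meaning one given by (\ref{eCplusDef}) for some symmetric $A_+$ with $\jvIm A_+ > 0$ — such that
\[
	\tfrac{1}{4}G^*G = \Phi''_{\bar x x}, \qquad -\tfrac{1}{4}G^tC_+G = \Phi''_{xx}.
\]
Since $\Phi$ is strictly convex, $P := \Phi''_{\bar x x}$ is positive definite Hermitian, so I would take $G := 2P^{1/2}$, the (invertible) positive definite Hermitian square root, and then \emph{define}
\[
	C_+ := -4\,(G^t)^{-1}\,\Phi''_{xx}\,G^{-1}.
\]
With these choices the two displayed identities hold by construction, so by uniqueness the weight $\Phi_2$ of (\ref{ePhi2Matrix}) built from this $G$ and $C_+$ is exactly $\Phi$; the content of the proposition is that $C_+$ is admissible.

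The symmetry $C_+^t = C_+$ is immediate from the symmetry of $\Phi''_{xx}$ together with $(G^{-1})^t = (G^t)^{-1}$. To see that $1 - C_+^*C_+ > 0$, I would note that, by the general formula for a real quadratic form and the definitions above,
\[
	\Phi(x) = \tfrac14\bigl(|Gx|^2 - \jvRe(Gx, C_+Gx)\bigr);
\]
substituting $y = Gx$ (an invertible change of variables) recovers the weight $\Phi_1$ of (\ref{ePhi1Def}), which is therefore strictly convex, hence strictly positive on $\Bbb{C}^n \setminus \{0\}$. Replacing $y$ by $e^{i\vartheta}y$ and optimizing over $\vartheta$ gives $|(y, C_+y)| < |y|^2$ for every $y \neq 0$; and since $C_+$ is complex symmetric, the Autonne--Takagi factorization yields $\sup_{|y|=1}|(y, C_+y)| = \|C_+\|$, so $\|C_+\| < 1$, i.e.\ $1 - C_+^*C_+ > 0$.

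It remains to produce $A_+$. Since $\|C_+\| < 1$, the matrix $1 + C_+$ is invertible, and I would set
\[
	A_+ := i\,(1+C_+)^{-1}(1-C_+);
\]
because $1 - C_+$, $1 + C_+$ and their inverses are mutual polynomials in $C_+$, the symmetry of $C_+$ gives $A_+^t = A_+$. A direct computation shows $1 - iA_+ = 2(1+C_+)^{-1}$ (in particular invertible) and $1 + iA_+ = 2(1+C_+)^{-1}C_+$, whence $(1-iA_+)^{-1}(1+iA_+) = C_+$, so $C_+$ has the form (\ref{eCplusDef}). Finally, since $C_+$ and $A_+$ now satisfy the hypotheses behind (\ref{e1MinusAbsCplus}) and $1+iA_+^* = (1-iA_+)^*$, that identity may be rewritten as $(1-iA_+)^*(1 - C_+^*C_+)(1-iA_+) = 4\,\jvIm A_+$; as $1 - iA_+$ is invertible and $1 - C_+^*C_+ > 0$, congruence forces $\jvIm A_+ > 0$. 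This exhibits $\Phi = \Phi_2$ with admissible data, completing the proof.

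The only step that is not pure bookkeeping is the identity $\sup_{|y|=1}|(y,C_+y)| = \|C_+\|$ for complex symmetric $C_+$ — that the numerical radius relative to the bilinear dot product agrees with the operator norm — which is where the Autonne--Takagi factorization enters; everything else is manipulation of the Cayley correspondence $A_+ \leftrightarrow C_+$ and reuse of the identity (\ref{e1MinusAbsCplus}) already established in the proof of Proposition \ref{pNormalForm}.
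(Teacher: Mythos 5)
Your proof is correct and follows essentially the same route as the paper's: factor $\Phi''_{\bar{x}x}=\frac14 G^*G$, define $C_+$ by the second identity in (\ref{ePhi2Matrix}), deduce $1-C_+^*C_+>0$ from strict convexity via Takagi's factorization, invert the Cayley transform to get $A_+=i(1+C_+)^{-1}(1-C_+)$, and conclude $\jvIm A_+>0$ from (\ref{e1MinusAbsCplus}). The only (cosmetic) difference is that the paper applies Takagi to diagonalize the pluriharmonic part and reads off the singular-value condition directly, whereas you first extract $|(y,C_+y)|<|y|^2$ by a phase-rotation argument and then use Takagi to identify the symmetric numerical radius with $\|C_+\|$.
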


\begin{proof}
We begin by reducing to the case $G_+ = 1$.  Having seen that $\Phi''_{\bar{x}x}$ is a positive definite Hermitian matrix, we may write
\[
	\Phi''_{\bar{x}x} = \frac{1}{4}G^*G
\]
for some invertible $G \in \Bbb{C}^{n\times n}$.  We then see that $\Phi(G^{-1}x)$ is strictly convex and that
\begin{equation}\label{ePhiB}
	\Phi(G^{-1}x) = \frac{1}{4}(|x|^2-\jvRe(x,Bx))
\end{equation}
for $B = -4 G^{-t}\Phi''_{xx}G^{-1}$, which is clearly symmetric. We naturally propose that
\[
	C_+ = B
\]
and set out to find some symmetric $A_+$ with $\jvIm A_+ > 0$ which yields $C_+$ via (\ref{eCplusDef}).

By Takagi's factorization (Corollary 4.4.4 of \cite{HoJoBook}) there exists some unitary $U \in \Bbb{C}^{n\times n}$ and a diagonal matrix $\Sigma$ whose entries, the singular values of $C_+$, are nonnegative real numbers for which
\[
	C_+ = U\Sigma U^t.
\]
But then, since $U$ is unitary,
\[
	\Phi_1(U^tx) = \frac{1}{4}(|x|^2 - \jvRe(x,\Sigma x)).
\]
(See also Lemma 5.1 of \cite{Ho1997}, which provides the same reduction to normal form.)  It is then immediate that strict convexity of $\Phi_1$ is equivalent to requiring that the diagonal entries of $\Sigma$ must lie in $[0,1)$.  Since the diagonal entries of $\Sigma$ are the square roots of the eigenvalues of $C_+^*C_+$, this in turn is equivalent to requiring that the selfadjoint operator $1-C_+^*C_+$ is positive definite.

Since the spectral radius of a matrix is at most its largest singular value and since the singular values of $C_+$ lie in $[0,1)$, we see that $-1 \notin \opnm{Spec} C_+$.  We may therefore solve (\ref{eCplusDef}) for $A_+$ and propose that
\begin{equation}\label{eAplusFromCplus}
	A_+ = i(1+C_+)^{-1}(1-C_+).
\end{equation}
Symmetry of $A_+$ follows from symmetry of $C_+$.

We then recall that $1-C_+^*C_+$ and $\jvIm A_+$ are related through (\ref{e1MinusAbsCplus}), where it was seen that positive definiteness of $1-C_+^*C_+$ is equivalent to positive definiteness of $\jvIm A_+$.  Since we have established positive definiteness of $1-C_+^*C_+$ through strict convexity of $\Phi_1$, this completes the proof of the proposition.
\end{proof}

\begin{remark}\label{rGCFromConvexity} We also record that the invertible matrix $G$ and the symmetric matrix $C_+$ in (\ref{ePhi2Matrix}) may be written in terms of derivatives of the weight $\Phi$.  We see that we may make the (non-unique) choice
\[
	G = 2(\Phi''_{\bar{x}x})^{1/2},
\]
using the usual square root of a positive definite Hermitian matrix.  We then note that we used
\[
	C_+ = -4G^{-t}\Phi_{xx}''G^{-1}.
\]
We furthermore note the geometric characterizations that $G$ may be regarded as determining the Hermitian part of $\Phi$, defined in (\ref{ePhihermDef}), while $C_+$ determines the pluriharmonic part, defined in (\ref{ePhiplhDef}), of the reduced weight function $\Phi(G^{-1}x)$.
\end{remark}

\subsection{Characterization of the dual basis to $\{x^\alpha\}$ in $H_\Phi$}\label{ssDualBasis}

The main goal of this section is to obtain a formula for $\{\phi_\alpha^\dagger\}_{\alpha\in\Bbb{N}^n_0}$ for which
\[
	\langle x^\alpha, \phi_\beta^\dagger\rangle_{H_\Phi} = \delta_{\alpha\beta}.
\]
Throughout this section, $\Phi:\Bbb{C}^n\rightarrow\Bbb{R}$ is real-valued, strictly convex, and quadratic.  We note that these relations determine the $\phi_\beta^\dagger$ uniquely in $H_\Phi$ since the functional $u \mapsto \langle u, \phi^\dagger_\beta\rangle_{H_\Phi}$ is then prescribed on the polynomials, a dense subset of $H_\Phi$ (see Remark \ref{rPolynomialsDense}).  We show that $\phi_\beta^\dagger \in H_\Phi$ below as a consequence of (\ref{ephidaggerDef}).

We will show that, when
\begin{equation}\label{ephi0daggerDef}
	\phi_0^\dagger(x) = C_0\jvexp\left(\frac{1}{h}(x,\Phi_{xx}'' x)\right)
\end{equation}
we have
\begin{equation}\label{exStarphi0dag}
	x^* \phi_0^\dagger = 0,
\end{equation}
where $x = (x_1,\dots,x_n)$ is a multiplication operator, the adjoint represents $x^* = (x_1^*,\dots,x_n^*)$ acting on $(H_\Phi)^n$, and equality here is naturally in $(H_\Phi)^n$.  So long as the $h$-dependent constant $C_0$ is chosen such that 
\begin{equation}\label{eDualProperty0}
	\langle 1, \phi_0^\dagger\rangle_{H_\Phi} = \int_{\Bbb{C}^n} \overline{C_0\jvexp\left(\frac{1}{h}(x,\Phi_{xx}'' x)\right)}e^{-2\Phi(x)/h}\,dL(x) = 1,
\end{equation}
it will then be immediate from (\ref{exStarphi0dag}) that
\[
	\langle x^\alpha, \phi_0^\dagger\rangle_{H_\Phi} = \langle 1, (x^*)^\alpha \phi_0^\dagger\rangle_{H_\Phi} = \delta_{\alpha,0},
\]

Passing to adjoints, we may then easily see that
\[
	\langle x^\alpha, (\beta!)^{-1}(\partial_x^*)^\beta \phi_0^\dagger\rangle_{H_\Phi} = \delta_{\alpha\beta}.
\]
Indeed, when $\alpha = \beta$ we only need to use that $\partial_x^\beta x^\beta = \beta !\cdot 1$.  When there exists some $j\in \{1,\dots,n\}$ with $\alpha_j < \beta_j$, we have that $\partial_x^\beta x^\alpha = 0$, and if $\alpha_j \geq \beta_j$ for all $j\in\{1,\dots,n\}$ yet $\alpha \neq \beta$, then 
\[
	\langle x^\alpha, (\beta!)^{-1}(\partial_x^*)^\beta\phi_0^\dagger\rangle_{H_\Phi} = C_{\alpha\beta}\langle x^{\alpha - \beta},\phi^\dagger_0\rangle_{H_\Phi} = 0
\]
by (\ref{exStarphi0dag}).

Once we establish (\ref{exStarphi0dag}), we will therefore have that
\begin{equation}\label{ephidaggerTrivialDef}
	\phi_\beta^\dagger = (\beta !)^{-1}(\partial_x^*)^\beta \phi_0^\dagger.
\end{equation}
What remains is to express $x^*$ and $\partial_x^*$ in useful ways.  We will show that both operators may be represented as $Ax + B(hD_x)$ for matrices $A,B$ depending on the weight $\Phi$.

We begin with a bookkeeping rule for adjoints of $n$-tuples of operators.  Let $\mathcal{H}$ be a Hilbert space, and let $\Bff{A} = (A_1,\dots, A_n)$ and $\Bff{B} = (B_1,\dots,B_n)$ be operators on $\mathcal{H}^n$ subject to the rule
\[
	\Bff{B} = M \Bff{A}, \quad M = (m_{jk})_{j,k=1}^n.
\]
Writing $\Bff{B}^* = (B_1^*,\dots,B_n^*)$, we then have the rule
\begin{equation}\label{eVectorAdjoints}
	\Bff{B}^* = \overline{M} \Bff{A}^*,
\end{equation}
obtained by taking complex conjugates of the entries in $M$ but without transposition.

We return to the specific context of $H_\Phi$ with $\Phi$ real-valued, quadratic, and strictly convex, recalling the decomposition (\ref{ePhiDecompose}) and related facts at the beginning of Section \ref{ssReversibility}.

We now compute the operators $x^*$ and $\partial_x^*$, with adjoints hereafter in this section taken in $H_\Phi$.   We remark that formulas (\ref{exStar}) and (\ref{ehDxStar}) below may be obtained from the unique expression of the symbols $\bar{x}$ and $\bar{\xi}$ as holomorphic linear functions of $x$ and $\xi$ when $(x,\xi) \in \Lambda_{\Phi}$ defined in (\ref{eLambdaPhiDef}).  For completeness, we include the usual proof via integration by parts.

Note that holomorphic and antiholomorphic derivatives $\partial_x$ and $\partial_{\bar{x}}$ are formally antisymmetric on the unweighted space $L^2(\Bbb{C}^n,dL(x))$. There is a dense subset of $u,v\in H_\Phi$ with sufficient decay at infinity to justify the following computation:
\begin{eqnarray*} 
	\langle hD_x u, v\rangle_{H_\Phi} &=& \int_{\Bbb{C}^n} (hD_x u(x)) \overline{v(x)}e^{-2\Phi(x)/h}\,dL(x)
	\\ &=& -\int_{\Bbb{C}^n} u(x) hD_x\left[\overline{v(x)}e^{-2\Phi(x)/h}\right]\,dL(x)
	\\ &=& -\int_{\Bbb{C}^n} u(x) \overline{v(x)}\frac{h}{i}\left(-\frac{2}{h}\right)\partial_x\Phi(x) e^{-2\Phi(x)/h}\,dL(x)
	\\ &=& \int_{\Bbb{C}^n} u(x) \overline{v(x)}\frac{2}{i}\left(\Phi''_{xx} x + \Phi''_{x\bar{x}}\bar{x}\right)e^{-2\Phi(x)/h}\,dL(x).
\end{eqnarray*}
For instance, one may take $|u(x)| \leq \BigO_{N,h}(1)\langle x\rangle^{-N}e^{\Phi(x)/h}$ and $v(x)$ obeying similar estimates, as in page 8 of \cite{Sj1996}.  This gives
\[
	\langle (\frac{i}{2}hD_x-\Phi''_{xx}x) u, v\rangle_{H_\Phi} = \langle u, \overline{\Phi''_{x\bar{x}}}x v\rangle_{H_\Phi},
\]
from which
\[
	\left(\overline{\Phi''_{x\bar{x}}}x\right)^* = -\Phi''_{xx}x + \frac{i}{2}hD_x,
\]
which leads immediately to 
\begin{equation}\label{exStar}
	x^* = (\Phi''_{x\bar{x}})^{-1}\left(-\Phi''_{xx}x + \frac{i}{2}hD_x\right)
\end{equation}
in view of (\ref{eVectorAdjoints}).  

We recall from Section \ref{ssReversibility} that $\Phi''_{x\bar{x}}$ is a positive definite Hermitian matrix.  We therefore have that, if
\[
	\phi_0^\dagger(x) = C_0\jvexp\left(\frac{1}{h}(x,Fx)\right), \quad F \in \Bbb{C}^{n\times n}, \quad F^t = F,
\]
then
\[
	x^*\phi_0^\dagger(x) = (\Phi''_{x\bar{x}})^{-1}\left(-\Phi''_{xx}x + \frac{i}{2}\cdot\frac{h}{i}\left(\frac{1}{h}\right)2Fx\right)\phi_0^\dagger(x).
\]
Thus $x^*\phi_0^\dagger(x) = 0$ exactly when
\[
	F = \Phi''_{xx}.
\]

We recall definitions (\ref{ePhihermDef}) and (\ref{ePhiplhDef}). That $\phi_0^\dagger\in H_\Phi$ follows from the convenient fact that
\[
	|C_0^{-1}\phi_0^\dagger(x)|^2 = \jvexp\left(\frac{2}{h}\Phi_{\textnormal{plh}}(x)\right),
\]
and so
\begin{equation}\label{ephidaggerAndWeight}
	|\phi_0^\dagger(x)|^2 e^{-2\Phi(x)/h} = |C_0|^2 \jvexp\left(-\frac{2}{h}\Phi_{\textnormal{herm}}(x)\right),
\end{equation}
where we have seen that $\Phi_{\textnormal{herm}}(x)$ is strictly convex.  That $\langle 1, \phi^\dagger_0\rangle_{H_\Phi}$ is finite follows from the Cauchy-Schwarz inequality, since both $||1||_{H_\Phi}^2$ and $||C_0^{-1}\phi^\dagger_0||_{H_\Phi}^2$ may now be seen as integrals of $1$ against $\jvexp(-2\tilde{\Phi}(x)/h)$ for some $\tilde{\Phi}$ which is a real-valued strictly convex quadratic form.  We may therefore choose an $h$-dependent constant $C_0$ such that (\ref{eDualProperty0}) holds, and we will compute this constant below.

Moving on to $(hD_x)^*$, we obtain from (\ref{exStar}) that
\[
	\overline{\Phi_{x\bar{x}}''}x= -\overline{\Phi''_{xx}}x^* - \frac{i}{2}(hD_x)^*,
\]
and so
\begin{multline}\label{ehDxStar}
	(hD_x)^* = -\frac{2}{i}(\overline{\Phi_{xx}''} x^* + \overline{\Phi''_{x\bar{x}}}x)
	\\ = \frac{2}{i}\left(-\overline{\Phi_{x\bar{x}}''}+\overline{\Phi_{xx}''}(\Phi_{x\bar{x}}'')^{-1}\Phi''_{xx}\right)x - \overline{\Phi''_{xx}}(\Phi''_{x\bar{x}})^{-1}hD_x.
\end{multline}
The former formulation is particularly convenient because $x^*\phi_0^\dagger = 0$.  It follows from (\ref{exStar}) and the Leibniz rule that
\[
	x^*(fg) = f\cdot (x^*g) + \frac{i}{2}((\Phi''_{x\bar{x}})^{-1}hD_x f)\cdot g.
\]
Therefore
\begin{eqnarray*}
	(\partial_x)^*(f\phi_0^\dagger) &=& -\frac{i}{h}(hD_x)^*(f\phi^\dagger_0) 
	\\ &=& \frac{2}{h}\overline{\Phi''_{xx}}x^*(f\phi_0^\dagger) + \frac{2}{h}\overline{\Phi''_{x\bar{x}}}x(f\phi_0^\dagger)
	\\ &=& \frac{2}{h}\overline{\Phi''_{xx}}\left(fx^*\phi_0^\dagger + \frac{i}{2}((\Phi''_{x\bar{x}})^{-1}hD_x f)\phi_0^\dagger\right)+\frac{2}{h}\overline{\Phi_{x\bar{x}}''}xf\phi_0^\dagger
	\\ &=& \left[\left(\frac{2}{h}\overline{\Phi_{x\bar{x}}''} x + \overline{\Phi_{xx}''}(\Phi_{x\bar{x}}'')^{-1}\partial_x\right)f\right]\phi_0^\dagger.
\end{eqnarray*}
We conclude that
\begin{equation}\label{ephidaggerDef}
	\phi^\dagger_\alpha = (\alpha!)^{-1}\left[\left(\frac{2}{h}\overline{\Phi_{x\bar{x}}''} x + \overline{\Phi_{xx}''}(\Phi_{x\bar{x}}'')^{-1}\partial_x\right)^\alpha 1\right]\phi_0^\dagger.
\end{equation}
Since this formula makes it apparent that each $\phi_\alpha^\dagger$ is a polynomial times $\phi_0^\dagger$, we deduce $\phi_\alpha^\dagger \in H_\Phi$ immediately as a consequence of (\ref{ephidaggerAndWeight}).

To compute $C_0$ in (\ref{ephi0daggerDef}), we write
\begin{equation}\label{eC0Computation}
	\langle 1,\phi^\dagger_0\rangle_{H_\Phi} = \int_{\Bbb{C}^n} \overline{C}_0 e^{\frac{i}{h}Q(x)}\,dL(x)
\end{equation}
with
\begin{equation}\label{eQ1phi0}
	Q(x) = -i(\bar{x},\overline{\Phi''_{xx}}\bar{x}) + 2i\Phi(x) =  i(x,\Phi''_{xx}x)+2i(x,\Phi''_{x\bar{x}}\bar{x}).
\end{equation}
We may apply Lemma 13.2 in \cite{ZwBook} to see that, when 
\[
	Q:\Bbb{C}^n \sim \Bbb{R}^n_{\jvRe x} \times \Bbb{R}^n_{\jvIm x} \rightarrow \Bbb{C}
\]
is a quadratic form with $\jvIm Q$ strictly convex, we have
\[
	\int_{\Bbb{C}^n} e^{\frac{i}{h}Q(x)}\,dL(x) = \left(\det\left(\frac{1}{2\pi i h}\nabla^2_{\jvRe x,\jvIm x} Q\right)\right)^{-1/2}.
\]

We then note that 
\[
	\det \left(\frac{1}{2\pi i h}\nabla^2_{\jvRe x, \jvIm x} Q\right) = (2\pi i h)^{-2n}\det \nabla^2_{\jvRe x, \jvIm x} Q,
\]
since $\nabla^2_{\jvRe x, \jvIm x} Q$ is a $2n$-by-$2n$ matrix.  Next, we use block matrices to write 
\begin{multline*}
	Q(x) = \left(\left(\begin{array}{c} \jvRe x \\ \jvIm x\end{array} \right), \frac{1}{2}\nabla^2_{\jvRe x, \jvIm x} Q \left(\begin{array}{c} \jvRe x \\ \jvIm x\end{array} \right)\right) = \left(\left(\begin{array}{c} x \\ \bar{x} \end{array} \right), \frac{1}{2}\nabla^2_{x,\bar{x}} Q \left(\begin{array}{c} x \\ \bar{x}\end{array} \right)\right)
	\\ = \left(\left(\begin{array}{cc} 1 & i \\ 1 & -i \end{array}\right)\left(\begin{array}{c} \jvRe x \\ \jvIm x\end{array} \right), \frac{1}{2}\nabla^2_{x,\bar{x}} Q \left(\begin{array}{cc} 1 & i \\ 1 & -i \end{array}\right)\left(\begin{array}{c} \jvRe x \\ \jvIm x\end{array} \right)\right).
\end{multline*}
We therefore have that
\[
	\nabla^2_{\jvRe x, \jvIm x} Q = \left(\begin{array}{cc} 1 & 1 \\ i & -i \end{array}\right) \nabla^2_{x,\bar{x}} Q \left(\begin{array}{cc} 1 & i \\ 1 & -i \end{array}\right).
\]
Recall that we are considering $2n$-by-$2n$ matrices formed of $n$-by-$n$ blocks, and so in this context
\[
	\det \left(\begin{array}{cc} 1 & 1 \\ i & -i \end{array}\right) = (-2i)^n.
\]
We may conclude that
\begin{multline}\label{eComplexGaussianIntegral}
	\int_{\Bbb{C}^n} e^{\frac{i}{h}Q(x)}\,dL(x) = \left(\det\left(\frac{1}{2\pi i h}\nabla^2_{\jvRe x,\jvIm x} Q\right)\right)^{-1/2} \\ = \left((-2i)^{2n}(2\pi i h)^{-2n} \det \nabla_{x,\bar{x}}^2 Q\right)^{-1/2} = (\pi h)^n \left(\nabla_{x,\bar{x}}^2 Q\right)^{-1/2}.
\end{multline}

With $Q(x)$ given by (\ref{eQ1phi0}), we may write in block form
\[
	\nabla^2_{x,\bar{x}} Q(x) = \left(\begin{array}{cc}\partial_x^2 Q & \partial_{\bar{x}}\partial_x Q \\ \partial_x\partial_{\bar{x}}Q & \partial_{\bar{x}}^2 Q \end{array}\right) = 2i\left(\begin{array}{cc} \Phi''_{xx} & \Phi''_{x\bar{x}} \\ \Phi''_{\bar{x} x} & 0\end{array}\right).
\]
Permuting $n$ rows to interchange rows in the block matrix representation, we therefore have
\[
	\det(\nabla_{x,\bar{x}}^2 Q) = (2i)^{2n}(-1)^n \det \left(\begin{array}{cc} \Phi''_{\bar{x}x} & 0 \\ \Phi''_{xx} & \Phi''_{x\bar{x}}\end{array}\right) = 2^{2n}\det(\Phi''_{\bar{x}x})\det(\Phi''_{x\bar{x}}).
\]
Since $\Phi''_{\bar{x}x} = (\Phi''_{x\bar{x}})^t$, the two determinants are equal.  From (\ref{eC0Computation}) and (\ref{eComplexGaussianIntegral}) we obtain
\begin{equation}\label{eC0Result}
	C_0 = \overline{\left(\frac{2}{\pi h}\right)^n \det \Phi_{x\bar{x}}''} = \left(\frac{2}{\pi h}\right)^n\det \Phi_{x\bar{x}}'',
\end{equation}
since $\Phi_{x\bar{x}}''$ is a positive definite Hermitian matrix.

We end this section with an elementary lemma that proves the well-known fact that the collection $\{\phi^\dagger_\alpha\}_{\alpha \in \Bbb{N}_0^n}$ form the eigenfunctions of the adjoint operator
\[
	(\tilde{Q}(h))^*_{H_\Phi} = (\tilde{q}^w(x,hD_x))^*_{H_\Phi}
\]
when
\[
	\tilde{q}(x,\xi) = (Mx)\cdot\xi
\]
with $M$ in Jordan normal form.

\begin{lemma}\label{lAdjointEigenvectors}
Let both the quadratic form $\tilde{q}(x,\xi) = (Mx)\cdot \xi:\Bbb{C}^{2n}\rightarrow \Bbb{C}$ and the strictly convex quadratic weight $\Phi_2:\Bbb{C}^n\rightarrow \Bbb{R}$ be obtained from applying Proposition \ref{pNormalForm} to a quadratic $q:\Bbb{R}^{2n}\rightarrow \Bbb{C}$ which is partially elliptic and has trivial singular space in the sense of (\ref{eRealSemidef}) and (\ref{eTrivialS}).

Then $\{\phi_\alpha^\dagger\}_{\alpha \in \Bbb{N}_0^n}$ given by (\ref{ephidaggerDef}) form a basis of eigenfunctions for 
\[
	\tilde{q}^w(x,hD_x)^*_{H_{\Phi_2}}:\mathcal{D}(\tilde{q}^w(x,hD_x)^*)\rightarrow H_{\Phi_2},
\]
where specifically $\phi_\alpha^\dagger$ is a generalized eigenvector of $\tilde{q}^w(x,hD_x)^*_{H_{\Phi_2}}$ with eigenvalue $\overline{\mu_\alpha}$, with $\mu_\alpha$ defined in (\ref{emuDef}).
\end{lemma}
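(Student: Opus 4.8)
The plan is to use biorthogonality. The family $\{\phi^\dagger_\alpha\}$ is, by construction, the dual basis to the monomials $\{x^\alpha\}$, and by the proof of Theorem \ref{tCharacterizeProj} the operator $\tilde{q}^w(x,hD_x)$ acts on the monomials by a matrix which, with respect to the ordering (\ref{eMultiindexOrdering}), is triangular and Jordan-like in the sense of Lemma \ref{lJordanLike}. Passing to adjoints should transfer this structure to the $\{\phi^\dagger_\alpha\}$, with the diagonal entries $\mu_\alpha$ replaced by $\overline{\mu_\alpha}$, which is exactly the content of the lemma.

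First I would recall from Remark \ref{rqtilde} and the computation in the proof of Theorem \ref{tCharacterizeProj} that $\tilde{q}^w(x,hD_x)x^\alpha = \mu_\alpha x^\alpha + \sum_{j=1}^{n-1} \tfrac{\gamma_j h \alpha_j}{i}\, x^{\alpha'_j}$, where $\alpha'_j$ is the multi-index obtained from $\alpha$ by lowering $\alpha_j$ by one and raising $\alpha_{j+1}$ by one, and where $\gamma_j = 0$ unless $\lambda_j = \lambda_{j+1}$. Expanding an arbitrary polynomial $u = \sum_\alpha c_\alpha x^\alpha$ and using $\langle x^\gamma, \phi^\dagger_\delta\rangle_{H_{\Phi_2}} = \delta_{\gamma\delta}$, one computes directly that
\[
	\langle \tilde{q}^w(x,hD_x) u, \phi^\dagger_\beta\rangle_{H_{\Phi_2}} = \langle u, \overline{\mu_\beta}\,\phi^\dagger_\beta - \sum_{j\,:\,\beta_{j+1}\geq 1}\frac{\gamma_j h(\beta_j+1)}{i}\,\phi^\dagger_{\beta''_j}\rangle_{H_{\Phi_2}},
\]
where $\beta''_j$ is $\beta$ with $\beta_j$ raised by one and $\beta_{j+1}$ lowered by one. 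By (\ref{ephidaggerDef}) the right-hand argument lies in $H_{\Phi_2}$, being a polynomial times $\phi^\dagger_0$.

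Next I would upgrade this identity from polynomials to the whole domain of $\tilde{q}^w(x,hD_x)$, so as to conclude that $\phi^\dagger_\beta \in \mathcal{D}(\tilde{q}^w(x,hD_x)^*)$ and that $\tilde{q}^w(x,hD_x)^* \phi^\dagger_\beta$ equals the element exhibited above. For this it suffices that polynomials form a core for $\tilde{q}^w(x,hD_x)$ on $H_{\Phi_2}$: indeed, $\bigcup_{C_0}\mathcal{K}_{C_0}$ is dense by Remark \ref{rPolynomialsDense}, and each $\mathcal{K}_{C_0}$ is a finite-dimensional $\tilde{q}^w(x,hD_x)$-invariant subspace (hence invariant under the semigroup generated by $-\tilde{q}^w(x,hD_x)$), so this union is a core. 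Granting the formula for $\tilde{q}^w(x,hD_x)^*\phi^\dagger_\beta$, I finish as follows: since $\gamma_j \neq 0$ forces $\lambda_j = \lambda_{j+1}$ and therefore $\mu_{\beta''_j} = \mu_\beta$, the operator $\tilde{q}^w(x,hD_x)^*$ leaves invariant each finite-dimensional subspace $\opnm{span}\{\phi^\dagger_\gamma : \mu_\gamma = z\}$, and on it $\tilde{q}^w(x,hD_x)^* - \bar z$ is nilpotent because every correction term $\phi^\dagger_{\beta''_j}$ has $\sum_k k(\beta''_j)_k < \sum_k k\beta_k$. Hence $\phi^\dagger_\beta$ is a generalized eigenvector of $\tilde{q}^w(x,hD_x)^*$ with eigenvalue $\overline{\mu_\beta}$. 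That $\{\phi^\dagger_\alpha\}_{\alpha\in\Bbb{N}_0^n}$ is a (complete) basis follows from its being the biorthogonal system to the basis $\{x^\alpha\}$ of the dense subspace of polynomials; alternatively from the identity $\tilde{q}^w(x,hD_x)^*_{H_{\Phi_2}} = \mathcal{T}\,\bar{q}^w(x,hD_x)\,\mathcal{T}^*$ together with the known spectral theory of the partially elliptic operator $\bar{q}^w(x,hD_x)$.

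The main obstacle is the core step: the biorthogonality identity is immediate on polynomials, but one must be careful that it extends to all $u$ in the domain of $\tilde{q}^w(x,hD_x)$ before invoking the definition of the Hilbert-space adjoint; once that is in place, everything else is multi-index bookkeeping.
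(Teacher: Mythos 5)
Your route is genuinely different from the paper's and is essentially sound in its main computation. The paper never passes through the matrix of $\tilde{q}^w(x,hD_x)$ on monomials: it splits $\tilde{q}^w = \tilde{Q}_D + \tilde{Q}_N$ and computes $\tilde{Q}_D(h)^*\phi_\alpha^\dagger$ and $\tilde{Q}_N(h)^*\phi_\alpha^\dagger$ directly from the explicit formulas for $x^*$ and $\partial_x^*$ (integration by parts) together with the creation-operator structure $\phi_\alpha^\dagger = (\alpha!)^{-1}(\partial_x^*)^\alpha\phi_0^\dagger$ and the relation $x^*\phi_0^\dagger=0$; the induction is the classical harmonic-oscillator ladder argument. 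Your biorthogonality computation reproduces the same formula $\tilde{q}^w(x,hD_x)^*\phi_\beta^\dagger = \overline{\mu_\beta}\,\phi_\beta^\dagger - \sum_{j:\beta_{j+1}\geq 1}\frac{\gamma_j h(\beta_j+1)}{i}\phi_{\beta''_j}^\dagger$ (your coefficient is in fact the correct one), and the nilpotency argument via the monotone decrease of $\sum_k k\beta_k$ is the same ordering argument the paper borrows from Lemma 4.1 of \cite{HiSjVi2011}. What your approach buys is conceptual transparency (adjoint of a Jordan-like matrix acting on the dual basis); what it costs is the core step, which you correctly identify: you need polynomials to be a core for the maximal operator, which in turn leans on $-\tilde{q}^w(x,hD_x)$ generating a $C_0$-semigroup. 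The paper sidesteps this by working with the adjoints $x^*,\partial_x^*$ already justified on a dense set with decay in Section \ref{ssDualBasis}, though it is no more rigorous on this point than you are.

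The one genuine gap is your first justification of completeness: biorthogonality to a complete (even minimal, dense-span) system does \emph{not} imply that the dual system has dense span — this is a classical failure for non-Riesz bases, and here $\{x^\alpha\}$ is very far from a Riesz basis (that is the whole point of Theorem \ref{tBoundProj}). So ``follows from its being the biorthogonal system to the basis $\{x^\alpha\}$'' proves nothing. Your fallback via $\tilde{q}^w(x,hD_x)^*_{H_{\Phi_2}}$ being unitarily equivalent to $\bar{q}^w(x,D_x)$ can be made to work ($\bar q$ is again partially elliptic with trivial singular space, and the ranks of $P_{\{\mu\},A}$ and $P_{\{\bar\mu\},A^*}$ agree), but it is heavier machinery than needed. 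The paper's argument is direct and short: by (\ref{ephidaggerDef}), $\phi_\alpha^\dagger/\phi_0^\dagger$ is a polynomial of degree $|\alpha|$ with leading term $((2/h)\overline{(\Phi_2)''_{x\bar x}}x)^\alpha$ and $\overline{(\Phi_2)''_{x\bar x}}$ invertible, so $\opnm{span}\{\phi_\alpha^\dagger\} = \Bbb{C}[x_1,\dots,x_n]\phi_0^\dagger$; and by (\ref{ephidaggerAndWeight}) the map $u\mapsto (C_0/\phi_0^\dagger)u$ is unitary from $H_{\Phi_2}$ onto $H_{(\Phi_2)_{\textnormal{herm}}}$, carrying this span to the polynomials, which are dense by Remark \ref{rPolynomialsDense}. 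You should replace your completeness step with this (or fully carry out the adjoint-operator route); as written, the first justification is invalid and the second is only sketched.
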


\begin{proof}
We work in the space $H_{\Phi_2}$ with corresponding inner products and adjoints throughout.  What follows is essentially the classical proof which generates the eigenfunctions of the harmonic oscillator via creation-annihilation operators, with small modifications.

We recall from Remark \ref{rqtilde} that we may write
\[
	\tilde{q}^w(x,hD_x) = \tilde{Q}_D(h) + \tilde{Q}_N(h)
\]
for $\tilde{Q}_D(h) = \tilde{q}_D^w(x,hD_x)$ and $\tilde{Q}_N(h) = \tilde{q}_N^w(x,hD_x)$.  We begin by focusing on
\[
	\tilde{Q}_D(h) = \sum_{j=1}^n 2\lambda_j x_j hD_{x_j} + \frac{h}{i}\sum_{j=1}^n \lambda_j,
\]
and we show that
\begin{equation}\label{eQDphidagger}
	\tilde{Q}_D(h)^*\phi_\alpha^\dagger = \overline{\mu_\alpha}\phi_\alpha^\dagger.
\end{equation}

We have that
\[
	\tilde{Q}_D(h)^* = \sum_{j=1}^n 2\overline{\lambda_j}(hD_{x_j})^* x_j^* + \overline{\mu_0},
\]
and so the fact that
\[
	\tilde{Q}_D(h)^* \phi^\dagger_0 = \overline{\mu_0}\phi^\dagger_0
\]
follows immediately from the fact that $x^*\phi^\dagger_0 = 0$.

The statement (\ref{eQDphidagger}) for all $\phi_\alpha^\dagger$ follows by induction.  For $u,v\in \mathcal{D}(\tilde{Q}_D(h)) \subset H_\Phi$, using the definition of $\tilde{Q}_D(h)$ and the Leibniz rule for derivatives gives the relation
\begin{eqnarray*}
	\langle u, \tilde{Q}_D(h)^*\partial_{x_j}^* v\rangle &=& \langle \partial_{x_j}\tilde{Q}_D(h)u,v\rangle
	\\ &=& \left\langle 2h\frac{\lambda_j}{i}\partial_{x_j} u, v\right\rangle + \left\langle \tilde{Q}_D(h)\partial_{x_j}u, v\right\rangle
	\\ &=& \left\langle u, -2h\frac{\overline{\lambda_j}}{i}\partial_{x_j}^*v+\partial_{x_j}^*\tilde{Q}_D(h)v\right\rangle.
\end{eqnarray*}
We apply this to $v = \phi_\alpha^\dagger$ under the induction assumption $\tilde{Q}_D(h)^* \phi^\dagger_\alpha = \overline{\mu_\alpha}\phi^\dagger_\alpha$, and immediately see from (\ref{ephidaggerTrivialDef}) that $\tilde{Q}_D(h)^*\phi^\dagger_{\alpha+e_j} = \overline{\mu_{\alpha+e_j}}\phi^\dagger_{\alpha+e_j}$ for $e_j = (\delta_{jk})_{k=1}^n$ the standard basis vector.  Having established the base case $\alpha = 0$, we have proven (\ref{eQDphidagger}).

Now write
\[
	\tilde{Q}_N(h) = \sum_{j=1}^{n-1} \gamma_j x_{j+1}hD_{x_j},
\]
recalling that $\gamma_j \in \{0,1\}$ and that $\gamma_j = 0$ whenever $\lambda_{j+1} \neq \lambda_j$.  We now show that $\tilde{Q}_N(h)^*$ is nilpotent on each $\{\phi_\alpha^\dagger\}_{|\alpha| = m}$, though the degree of nilpotency naturally may increase with $m$.  We continue to write $e_j$ for the standard basis vector.  A similar approach shows that, for $u \in \mathcal{D}(\tilde{Q}_D(h))$,
\begin{eqnarray*}
	\langle u, \tilde{Q}_N(h)^* \phi_\alpha^\dagger \rangle &=& \sum_{j=1}^{n-1}\gamma_j (\alpha!)^{-1} \langle \partial_x^\alpha x_{j+1}hD_{x_j} u, \phi_0^\dagger\rangle
	\\ &=& \sum_{j=1}^{n-1} \gamma_j(\alpha!)^{-1}\langle (\partial_x^{\alpha-e_{j+1}}+x_{j+1}\partial_x^\alpha)hD_{x_j} u, \phi_0^\dagger\rangle
	\\ &=& \sum_{j=1}^{n-1} \gamma_j(\alpha!)^{-1}(-ih\langle u, (\partial_x^*)^{\alpha+e_j-e_{j+1}}\phi_0^\dagger\rangle)
	\\ && ~~~~~~+ \sum_{j=1}^{n-1} \gamma_j(\alpha!)^{-1}\langle \partial_x^\alpha hD_{x_j}u, x_{j+1}^*\phi_0^\dagger\rangle.
\end{eqnarray*}
The second term vanishes because $x^*\phi^\dagger_0 = 0$, and we therefore have that $\tilde{Q}_N(h)^*\phi_\alpha^\dagger$ is a linear combination of $\{\gamma_j\phi_{\alpha+e_j-e_{j+1}}^\dagger\}_{j=1}^{n-1}$.  From (\ref{emuDef}) and the fact that $\gamma_j = 0$ when $\lambda_{j+1}\neq \lambda_j$, we conclude that
\[
	\gamma_j \neq 0 \implies \mu_{\alpha+e_j-e_{j+1}} = \mu_{\alpha_j}.
\]

Having seen that $\tilde{Q}_N(h)^*$ takes $\phi_\alpha^\dagger$ to a linear combination of some $\phi^\dagger_{\beta}$ which are also eigenvectors of $\tilde{Q}_D(h)^*$ with eigenvalue $\overline{\mu_\alpha}$, we therefore have that, for $K \in \Bbb{N}$,
\[
	(\tilde{q}^w(x,hD_x)^* - \overline{\mu_\alpha})^K \phi_\alpha^\dagger = (\tilde{Q}_N(h)^*)^K\phi_\alpha^\dagger.
\]
The argument of Lemma 4.1 in \cite{HiSjVi2011}, which is essentially the observation that $\tilde{Q}_N(h)$ acts to strictly decrease the ordering (\ref{eMultiindexOrdering}), then shows that
\[
	(\tilde{q}^w(x,hD_x)^* - \overline{\mu_\alpha})^K \phi_\alpha^\dagger = 0
\]
for $K$ sufficiently large depending on $\alpha$.  

Since we now know that the $\phi_\alpha^\dagger$ are generalized eigenfunctions of $\tilde{q}^w(x,hD_x)^*$, the proof of the lemma is complete once we show that the $\{\phi_\alpha^\dagger\}_{\alpha \in \Bbb{N}^n_0}$ have dense span in $H_{\Phi_2}$.  Via (\ref{ephidaggerDef}) we see that $\phi_\alpha^\dagger/\phi_0^\dagger$ is a polynomial of degree $|\alpha|$ with leading term $((2/h)\overline{(\Phi_2)_{x\bar{x}}''}x)^\alpha$, and since $\overline{(\Phi_2)_{x\bar{x}}''}$ is invertible by strict plurisubharmonicity of $\Phi_2$, we see that
\[
	\opnm{span}\{\phi_\alpha^\dagger\}_{\alpha\in\Bbb{N}_0^n} = \Bbb{C}[x_1,\dots,x_n]\phi_0^\dagger \subseteq H_{\Phi_2}.
\]
We furthermore see from (\ref{ephidaggerAndWeight}) that, with the strictly convex weight $(\Phi_2)_{\textnormal{herm}}$ defined in (\ref{ePhihermDef}), we have that the map
\[
	H_{\Phi_2} \ni u(x)\mapsto \frac{C_0}{\phi_0^\dagger(x)}u(x) \in H_{(\Phi_2)_{\textnormal{herm}}}
\]
is unitary and takes $\opnm{span}\{\phi^\dagger_\alpha\}$ to the polynomials $\Bbb{C}[x_1,\dots, x_n]$.  Since polynomials are dense in any strictly convex quadratically weighted $H_\Phi$ (Remark \ref{rPolynomialsDense}), this shows that the $\{\phi^\dagger_\alpha\}$ have dense span in $H_{\Phi_2}$ and completes the proof of the lemma.

\end{proof}

\subsection{Proof of Theorem \ref{tOrthogonalProj}}

We are now in a position to prove Theorem \ref{tOrthogonalProj}.

Let $\tilde{q}$ and $\Phi_2$ be provided from Proposition \ref{pNormalForm} applied to some quadratic form $q$ which is partially elliptic with trivial singular space in the sense of (\ref{eRealSemidef}) and (\ref{eTrivialS}).  We will proceed by showing that each condition in Theorem \ref{tOrthogonalProj} is equivalent to showing that $(\Phi_2)_{\textnormal{plh}} = 0$, using definition (\ref{ePhiplhDef}), or equivalently that $(\Phi_2)''_{xx} = 0$.

If $T$ is an $n$-by-$n$ invertible matrix with real entries and $V, W$ are two subspaces of $\Bbb{C}^n$, it is easy to see that
\[
	V = \overline{W} \iff T(V) = \overline{T(W)}.
\]
We have such a transformation on $\Bbb{C}^{2n}$ appearing in the $\kappa$ in (\ref{ekappaDef}) in the reduction to normal form.  For this $\kappa$ we have
\[
	\kappa(\Lambda^-) = \{(y,-iy)\}_{y\in\Bbb{C}^n}, \quad \kappa(\Lambda^+) = \{(y,A_+y)\}_{y\in\Bbb{C}^n},
\]
and so we see that condition \ref{iOrthogbar} in Theorem \ref{tOrthogonalProj} is equivalent to claiming that $A_+ = i$.  By the definition (\ref{eCplusDef}) of $C_+$ in the reduction to normal form and (\ref{ePhi2Matrix}), we see furthermore that this condition is equivalent to the claim that $(\Phi_2)''_{xx} = 0$.

On the FBI transform side and using the notation of (\ref{eSpectralProjection}), write 
\[
	\tilde{\Pi}_{\mu_\alpha} = P_{\{\mu_\alpha\},\tilde{q}^w(x,hD_x)}:H_{\Phi_2}\rightarrow H_{\Phi_2}
\]
for the spectral projection for $\tilde{q}^w(x,hD_x)$ and $\{\mu_\alpha\}$, with $\mu_\alpha$ defined in (\ref{emuDef}).  In view of Theorem \ref{tCharacterizeProj}, condition \ref{iOrthogproj} is equivalent to claiming that $\tilde{\Pi}_{\mu_0}$, whose range is the set of constant functions, is an orthogonal projection.  It is sufficient to show that 
\[
	\langle(1-\tilde{\Pi}_{\mu_0})u, \tilde{\Pi}_{\mu_0} u\rangle_{H_{\Phi_2}} = 0
\]
for each polynomial $u$, because polynomials are dense in $H_{\Phi_2}$ and we know that $\tilde{\Pi}_{\mu_0}$ is continuous (e.g.\ from Lemma \ref{lMainBounds}).  Since
\[
	(1-\tilde{\Pi}_{\mu_0})\left(\sum_{|\alpha| \leq N} a_\alpha x^\alpha\right) = \sum_{1 \leq |\alpha| \leq N} a_\alpha x^\alpha
\]
and
\[
	\tilde{\Pi}_{\mu_0} \left(\sum_{|\alpha| \leq N} a_\alpha x^\alpha\right) = a_0,
\]
condition \ref{iOrthogproj} is equivalent to the condition that the constant function $1$ is orthogonal to any polynomial vanishing at the origin.  This is turn is equivalent to requiring the constant function $1$ to be orthogonal to any polynomial multiplied by any $x_j$:
\[
	\langle x_j p(x), 1\rangle_{H_{\Phi_2}} = 0, \quad \forall p(x) \in \Bbb{C}[x_1,\dots,x_n], \quad \forall j=  1,\dots,n.
\]
Taking adjoints and using the density of polynomials in $H_{\Phi_2}$ gives the equivalent condition
\begin{equation}\label{eOrthogProjPf1}
	x^*_{H_{\Phi_2}} 1 = 0.
\end{equation}
Via (\ref{exStar}), we thus have that
\[
	x^*_{H_{\Phi_2}} 1 = -((\Phi_2)_{x\bar{x}}'')^{-1}(\Phi_2)''_{xx}x = 0,
\]
which establishes that condition \ref{iOrthogproj} is also equivalent to the statement $(\Phi_2)''_{xx}=0$.

The same unitary equivalence from Proposition \ref{pNormalForm} makes condition \ref{iOrthogker} equivalent to
\[
	\ker(\tilde{q}^w(x,hD_x)^*-\overline{\mu_0}) = \ker(\tilde{q}^w(x,hD_x)-\mu_0),
\]
where the right-hand side is the span of the constant function.  From Lemma \ref{lAdjointEigenvectors}, it is clear that this is equivalent to assuming that $\phi^\dagger_0$ is a constant function.  We have already seen in (\ref{ephi0daggerDef}) that this is the same as insisting that $(\Phi_2)''_{xx} = 0$.

This completes the proof of the theorem.  Some numerical analysis based on the resulting decomposition in Remark \ref{rFurtherDecomposition} is presented above in Section \ref{ssNumerics}.

\subsection{Transferring $\partial_x^*$ to a multiplication operator}\label{ssTransferringdxStar}

In the reduction to normal form in Proposition \ref{pNormalForm}, we reduce $q^w(x,hD_x)$ acting on $L^2(\Bbb{R}^n)$ to an operator $\tilde{q}^w(x,hD_x)$ acting on $H_{\Phi_2}(\Bbb{C}^n;h)$ for which the monomials form a basis of generalized eigenvectors.  We have seen in Lemma \ref{lAdjointEigenvectors} that the $\{\phi^\dagger_\alpha\}$ of Section \ref{ssDualBasis} form a basis of generalized eigenvectors for $\tilde{q}^w(x,hD_x)^*_{H_{\Phi_2}}$.  It is therefore natural to expect that applying Proposition \ref{pNormalForm} to $q^w(x,hD_x)^*$ would convert the $\{\phi^\dagger_\alpha\}$ into $\{x^\alpha\}$ in a different weighted space.

We do not reference $q^w(x,hD_x)^*$ here explicitly; we merely use the fact that taking adjoints of quantizations acting on $L^2(\Bbb{R}^n)$ takes the complex conjugate of the principal symbol.  We only need to consider the obvious effect on the stable manifolds defined in (\ref{eLambdaDef}):
\[
	\Lambda^{\pm}(\bar{q}) = \overline{\Lambda^\mp(q)}.
\]
Given this strategy, the computations which follow are natural and routine, but are included for completeness.

In view of Proposition \ref{pReversibility}, we assume that $\Phi$ is in the form given by (\ref{ePhi2Matrix}).  We begin with the case $G = 1$, meaning that we take
\[
	\Phi(x) = \Phi_1(x) = \frac{1}{4}\left(|x|^2-\jvRe(x,C_+x)\right)
\]
for
\[
	C_+ = (1-iA_+)^{-1}(1+iA_+),\quad A_+^t = A_+,\quad \jvIm A_+ > 0.
\]
This will be easily extended, since $(\partial_x)^*_{H_{\Phi_2}} = ((i/h)\xi^w)^*_{H_{\Phi_2}}$ may be computed as an operator on $H_{\Phi_1}$ because the change of variables which maps $H_{\Phi_1}$ to $H_{\Phi_2}$ quantizes the map $(x,\xi)\mapsto (G^{-1} x, G^t\xi)$.  This means that there is a unitary equivalence between $(\partial_x)^*_{H_{\Phi_2}}$ and
\begin{equation}\label{edStarPhi1ToPhi2}
	\left(\left(G^t\frac{i}{h}\xi\right)^w\right)^*_{H_{\Phi_1}} = \overline{G^t}(\partial_x)^*_{H_{\Phi_1}}.
\end{equation}
We therefore continue with $\Phi_1$ and apply this computation afterwards.

We then apply (\ref{ehDxStar}) and (\ref{exStar}) with derivatives taken from (\ref{ePhi2Matrix}) when $G = 1$, the identity matrix.  We obtain
\begin{equation}\label{edStarPhi1}
	(\partial_x)^*_{H_{\Phi_1}} = \frac{1}{2h}\left((1-C_+^*C_+)x+\frac{2}{i}C_+^*hD_x\right)
\end{equation}
and
\begin{equation}\label{exStarPhi1}
	x^*_{H_{\Phi_1}} = C_+x + 2ihD_x.
\end{equation}

We then invert the FBI transform described in the proof of Proposition \ref{pNormalForm} with canonical transformation from (\ref{ekapADef}),
\[
	\kap_{A_+} = \left(\begin{array}{cc} 1 & -i \\ -(1-iA_+)^{-1}A_+ & (1-iA_+)^{-1}\end{array}\right).
\]

Recalling that
\[
	\kap_{A_+}^{-1}(\{x = 0\}) = \Lambda^- = \{(y,-iy)\}_{y\in\Bbb{C}^n}
\]
and
\[
	\kap_{A_+}^{-1}(\{\xi = 0\}) = \Lambda^+ = \{(y,A_+y)\}_{y\in\Bbb{C}^n},
\]
we reverse the roles of $\Lambda^+$ and $\Lambda^-$ by taking their complex conjugates.  We straighten $\overline{\Lambda^+}$ to $\{(y,-iy)\}$ as in the proof of Proposition \ref{pNormalForm}. From (\ref{ekappaDef}) we recall that with
\[
	\kappa = \left(\begin{array}{cc} (\jvIm A_+)^{1/2} & 0 \\ -(\jvIm A_+)^{-1/2}\jvRe A_+ & (\jvIm A_+)^{-1/2}\end{array}\right)
\]
we have 
\[
	\kappa(\overline{\Lambda^+}) = \{(y,-iy)\}_{y\in\Bbb{C}^n},
\]
and we furthermore note that
\[
	\kappa(\overline{\Lambda^-}) = \{(y,\tilde{A}_+ y)\}_{y\in\Bbb{C}^n},
\]
where here
\begin{equation}\label{etildeAplusDef}
	\tilde{A}_+ = (\jvIm A_+)^{-1/2}(i-\jvRe A_+)(\jvIm A_+)^{-1/2}.
\end{equation}

We expect that canonical transformations of these Lagrangian planes should be performed via unitary operators quantizing these canonical transformations, since we have the rule
\[
	\Lambda^{\pm}(q\circ\kappa^{-1}) = \kappa(\Lambda^\pm(q)).
\]
We only need to recall, following the proof of Proposition \ref{pNormalForm}, that there exists some unitary transformation
\[
	\tilde{\mathcal{T}}_0:H_{\Phi_1}\rightarrow H_{\Phi_1^\dagger}
\]
quantizing the complex linear canonical transformation $K^{-1}$ with
\[
	K = \kap_{A_+} \circ \kappa^{-1}\circ \kap_{\tilde{A}_+}^{-1}.
\]
Here
\[
	\Phi_1^\dagger(x) = \frac{1}{4}(|x|^2 - \jvRe(x,\tilde{C}_+ x))
\]
for
\[
	\tilde{C}_+ = (1-i\tilde{A}_+)^{-1}(1+i\tilde{A}_+).
\]

All that remains is to compute $K$ and discover what becomes of the symbols of $(\partial_x)^*_{H_{\Phi_1}}$ and $x^*_{H_{\Phi_1}}$ by computing, using (\ref{edStarPhi1}) and (\ref{exStarPhi1}),
\begin{equation}\label{edStarTransformSymbol}
	\sigma((\partial_x)^*_{H_{\Phi_1}})\circ K = \left.\frac{1}{2h}\left((1-C_+^*C_+)y+\frac{2}{i}C_+^*\eta\right)\right|_{(y,\eta) = K(x,\xi)}
\end{equation}
and
\begin{equation}\label{exStarTransformSymbol}
	\sigma(x^*_{H_{\Phi_1}}) \circ K = \left. (C_+ y + 2i \eta )\right|_{(y,\eta) = K(x,\xi)}.
\end{equation}
Note that the symbols $\sigma((\partial_x)^*_{H_{\Phi_1}})$ and $\sigma(x^*_{H_{\Phi_1}})$ naturally take their inputs from $\Lambda_{\Phi_1}$ defined in (\ref{eLambdaPhiDef}), while $\sigma((\partial_x)^*_{H_{\Phi_1}})\circ K$ and $\sigma(x^*_{H_{\Phi_1}})\circ K$ take their inputs from $\Lambda_{\Phi_1^\dagger}$ for $\Phi_1^\dagger$ associated with $\tilde{A}_+$.

We decompose the maps involved in $K$ as much as possible:
\[
	\kap_{A_+} = \left(\begin{array}{cc} 1 & 0 \\ 0 & (1-iA_+)^{-1}\end{array}\right) \left(\begin{array}{cc} 1 & -i \\ -A_+ & 1\end{array}\right),
\]
\[
	\kappa^{-1} = \left(\begin{array}{cc} 1 & 0 \\ \jvRe A_+ & 1\end{array}\right) \left(\begin{array}{cc} (\jvIm A_+)^{-1/2} & 0 \\ 0 & (\jvIm A_+)^{1/2}\end{array}\right),
\]
and
\[
	\kap_{\tilde{A}_+}^{-1} = \left(\begin{array}{cc} 1 & i \\ \tilde{A}_+ & 1\end{array}\right) \left(\begin{array}{cc} (1-i\tilde{A}_+)^{-1} & 0 \\ 0 & 1\end{array}\right)
\]
In addition to the definition (\ref{etildeAplusDef}) of $\tilde{A}_+$, we compute
\begin{multline}\label{e1mtApInv}
	(1-i\tilde{A}_+)^{-1} = \left((\jvIm A_+)^{-1/2}(\jvIm A_+ - i(i - \jvRe A_+))(\jvIm A_+)^{-1/2}\right)^{-1}
	\\ = (\jvIm A_+)^{1/2}(1+iA_+^*)^{-1}(\jvIm A_+)^{1/2}.
\end{multline}
It is then easy to show that
\[
	\kappa^{-1}\circ\kap_{\tilde{A}_+}^{-1} = \left(\begin{array}{cc} 1 & i \\ i & iA_+^*\end{array}\right)\left(\begin{array}{cc} (1+iA_+^*)^{-1} & 0 \\ 0 & 1\end{array}\right) \left(\begin{array}{cc} (\jvIm A_+)^{1/2} & 0 \\ 0 & (\jvIm A_+)^{-1/2}\end{array}\right)
\]
and, recalling definition (\ref{eCplusDef}) of $C_+$, that
\[
	K = \left(\begin{array}{cc} 2(1+iA_+^*)^{-1}(\jvIm A_+)^{1/2} & i(1-iA_+^*)(\jvIm A_+)^{-1/2} \\ iC_+(1+iA_+^*)^{-1}(\jvIm A_+)^{1/2} & 2(1-iA_+)^{-1}(\jvIm A_+)^{1/2}\end{array}\right).
\]

We now wish to compute the symbols corresponding to $(\partial_x)^*_{H_{\Phi_1}}$ and $x^*_{H_{\Phi_1}}$.  From (\ref{edStarTransformSymbol}), we have
\begin{multline*}
	2h\sigma((\partial_x)^*_{H_{\Phi_1}})\circ K 
	\\ = (1-C_+^*C_+)\left(2(1+iA_+^*)^{-1}(\jvIm A_+)^{1/2}x+i(1-iA_+^*)(\jvIm A_+)^{-1/2}\xi\right)
	\\ + \frac{2}{i} C_+^*\left(iC_+(1+iA_+^*)^{-1}(\jvIm A_+)^{1/2}x + 2(1-iA_+)^{-1}(\jvIm A_+)^{1/2}\xi\right).
\end{multline*}
The coefficient of $x$ is clearly $2(1+iA_+^*)^{-1}(\jvIm A_+)^{1/2}$.

We expect the coefficient of $\xi$ to vanish.  Using (\ref{e1MinusAbsCplus}), we have that the coefficient of $\xi$ is
\begin{multline*}
	i(1-C_+^*C_+)(1-iA_+^*)(\jvIm A_+)^{-1/2} + \frac{4}{i} C_+^*(1-iA_+)^{-1}(\jvIm A_+)^{1/2}
	\\ = 4i(1+iA_+^*)^{-1}\left(\jvIm A_+(1-iA_+)^{-1}(1-iA_+^*)\right.
	\\ \left.- (1-iA_+^*)(1-iA_+)^{-1}\jvIm A_+\right)(\jvIm A_+)^{-1/2}.
\end{multline*}
Since $A_+$ is symmetric, we see that the coefficient of $\xi$ vanishes if and only if the matrix
\[
	\jvIm A_+(1-iA_+)^{-1}(1-iA_+^*)
\]
is symmetric.  The computation
\begin{multline*}
	\jvIm A_+(1-iA_+)^{-1}(1-iA_+^*) = \frac{1}{2i}(A_+ - A_+^*)(1-iA_+)^{-1}(1-iA_+^*)
	\\ = \frac{1}{2}\left(1-iA_+ - (1-iA_+^*)\right)(1-iA_+)^{-1}(1-iA_+^*)
	\\ = \frac{1}{2}\left(1-(1-iA_+^*)(1-iA_+)^{-1}(1-iA_+^*)\right)
\end{multline*}
yields an obviously symmetric matrix.  This completes the proof that the coefficient of $\xi$ is zero, and so
\[
	\sigma((\partial_x)^*_{H_{\Phi_1}})\circ K = \frac{1}{h}(1+iA_+^*)^{-1}(\jvIm A_+)^{1/2} x.
\]

Next, from (\ref{exStarTransformSymbol}), we have
\begin{multline*}
	\sigma(x^*_{H_{\Phi_1}}) \circ K
	\\ = C_+\left(2(1+iA_+^*)^{-1}(\jvIm A_+)^{1/2}x + i(1-iA_+^*)(\jvIm A_+)^{-1/2}\xi\right)
	\\ + 2i\left(iC_+(1+iA_+^*)^{-1}(\jvIm A_+)^{1/2}x + 2(1-iA_+)^{-1}(\jvIm A_+)^{1/2}\xi\right).
\end{multline*}
Here, it is easy to see that the coefficient of $x$ vanishes.  We then compute
\begin{multline*}
	\sigma(x^*_{H_{\Phi_1}}) \circ K = i(1-iA_+)^{-1}\left((1+iA_+)(1-iA_+^*) + 4 \jvIm A_+\right)(\jvIm A_+)^{-1/2}\xi
	\\ = i(1-iA_+)^{-1}(1-iA_+)(1+iA_+^*)(\jvIm A_+)^{-1/2} \xi.
\end{multline*}
We therefore arrive at the conclusion that
\[
	\sigma(x^*_{H_{\Phi_1}}) \circ K = i(1+iA_+^*)(\jvIm A_+)^{-1/2}\xi.
\]

We now extend our discussion from $\Phi_1$ to include $\Phi_2$ as in (\ref{ePhi2Matrix}).  As a result, our new unitary transformation
\[
	\tilde{\mathcal{T}}:H_{\Phi_2}\rightarrow H_{\Phi_1^\dagger}
\]
may be obtained by composing $\tilde{\mathcal{T}}_0$ with the unitary change of variables
\begin{equation}\label{eChgVarsPhi2ToPhi1}
	H_{\Phi_2} \ni u(x)\mapsto |\det G|^{-1}u(G^{-1}x)\in H_{\Phi_1}.
\end{equation}
As in the discussion leading up to (\ref{edStarPhi1ToPhi2}), it is straightforward to use the canonical transformation quantized by the change of variables $u(x) \mapsto |\det G| u(Gx)$ to obtain the following unitary equivalences:
\begin{equation}\label{edStarPhi1Dagger}
	\tilde{\mathcal{T}}(\partial_x)^*_{H_{\Phi_2}}\tilde{\mathcal{T}}^* = \frac{1}{h}\overline{G^t}(1+iA_+^*)^{-1}(\jvIm A_+)^{1/2}x:H_{\Phi_1^\dagger}\rightarrow H_{\Phi_1^\dagger}
\end{equation}
and
\[
	\tilde{\mathcal{T}}x^*_{H_{\Phi_2}}\tilde{\mathcal{T}}^* = i\overline{G^{-1}}(1+iA_+^*)(\jvIm A_+)^{-1/2}hD_x:H_{\Phi_1^\dagger}\rightarrow H_{\Phi_1^\dagger}.
\]

Because $\phi_0^\dagger$ was defined via the equation $x^*_{H_{\Phi_2}}\phi_0^\dagger = 0$, we know that 
\[
	hD_x \tilde{\mathcal{T}}\phi_0^\dagger = 0.
\]
We conclude that $\tilde{\mathcal{T}} \phi_0^\dagger \in H_{\Phi_1^\dagger}$ is a constant function, and since $\tilde{\mathcal{T}}$ is unitary, we may determine the constant through the equality
\begin{equation}\label{ephi0daggerConstantNorm}
	||\tilde{\mathcal{T}} \phi_0^\dagger||_{H_{\Phi_1^\dagger}} = ||\phi_0^\dagger||_{H_{\Phi_2}}.
\end{equation}
We begin with $||\phi_0^\dagger||_{H_{\Phi_2}}$, recalling having already computed $C_0$ in (\ref{eC0Result}).  Noting from (\ref{ePhi2Matrix}) that
\[
	(\Phi_2)''_{x\bar{x}} = \frac{1}{4}G^t\overline{G},
\]
we have that
\[
	C_0 = \left(\frac{2}{\pi h}\right)^n \det (\Phi_2)''_{x\bar{x}} = (2\pi h)^{-n}|\det G|^2.
\]
We refer to the observation (\ref{ephidaggerAndWeight}) to see that
\[
	||\phi_0^\dagger||_{H_{\Phi_2}} = |C_0|\:||1||_{H_{(\Phi_2)_{\textnormal{herm}}}}.
\]
The change of variables (\ref{eChgVarsPhi2ToPhi1}) allows us to see that 
\[
	||1||_{H_{(\Phi_2)_{\textnormal{herm}}}} = |\det G|^{-1}||1||_{H_{(\Phi_1)_{\textnormal{herm}}}},
\]
but it is clear from (\ref{ePhi1Def}) and (\ref{ePhihermDef}) that $(\Phi_1)_{\textnormal{herm}} = \frac{1}{4}|x|^2$.  Therefore
\begin{multline}\label{ephi0daggerNorm}
	||\phi_0^\dagger||_{H_{\Phi_2}} = (2\pi h)^{-n}|\det G| \left(\int_{\Bbb{C}^n} \jvexp\left(-\frac{1}{2h}|x|^2\right)\,dL(x)\right)^{1/2} 
	\\ = (2\pi h)^{-n} |\det G| (2\pi h)^{n/2} = (2\pi h)^{-n/2}|\det G|.
\end{multline}

We know that there exists some $\tilde{C}_0$ for which $\tilde{\mathcal{T}}\phi_0^\dagger = \tilde{C}_0$, and so we need to compute
\begin{equation}\label{etildeC0Computation}
	||\tilde{\mathcal{T}} \phi_0^\dagger||_{H_{\Phi_1^\dagger}} = |\tilde{C}_0|\,||1||_{H_{\Phi_1^\dagger}}.
\end{equation}
(Naturally, we are only interested in the absolute value of $\tilde{C}_0$.)  In order to apply (\ref{eComplexGaussianIntegral}), we write
\[
	\int_{\Bbb{C}^n} \jvexp\left(-\frac{2}{h}\Phi_1^\dagger(x)\right)\,dL(x) = \int_{\Bbb{C}^n} e^{\frac{i}{h}Q(x)}\,dL(x)
\]
for
\[
	Q(x) = \frac{i}{2}\left(|x|^2 - \jvRe (x,\tilde{C}_+ x)\right).
\]
Then
\[
	\nabla_{x,\bar{x}}^2 Q(x) = \frac{i}{2}\left(\begin{array}{cc} -\tilde{C}_+ & 1 \\ 1 & -\tilde{C}_+^*\end{array}\right).
\]
To avoid issues with block matrices, we perform row reduction by adding to the first row the result of postmultiplying the second row by $\tilde{C}_+$ and permute $n$ rows, obtaining
\[
	\det\left(\begin{array}{cc} -\tilde{C}_+ & 1 \\ 1 & -\tilde{C}_+^*\end{array}\right) = \det\left(\begin{array}{cc} 0 & 1-\tilde{C}_+^* \tilde{C}_+ \\ 1 & -\tilde{C}_+^*\end{array}\right) = (-1)^n\det\left(\begin{array}{cc} 1 & - \tilde{C}_+^* \\ 0 & 1-\tilde{C}_+^*\tilde{C_+}\end{array}\right).
\]
At this point, it becomes clear that
\[
	\opnm{det} (\nabla_{x,\bar{x}}^2 Q) = \left(\frac{i}{2}\right)^{2n}(-1)^n\det(1-\tilde{C}_+^* \tilde{C}_+) = 2^{-2n}\det(1-\tilde{C}_+^* \tilde{C}_+).
\]
We therefore conclude from (\ref{eComplexGaussianIntegral}) that
\begin{equation}\label{eNorm1PhiDag}
	||1||_{H_{\Phi_1^\dagger}} = (2\pi h)^{n/2}\det(1-\tilde{C}_+^* \tilde{C}_+)^{-1/4}.
\end{equation}
We see from (\ref{etildeAplusDef}) that $\jvIm \tilde{A}_+ = (\jvIm A_+)^{-1}$.  We may then use (\ref{e1MinusAbsCplus}) and (\ref{e1mtApInv}) to compute that                                                                                                                                                                                                                                                                                                                                                                                                     
\begin{multline*}
	1-\tilde{C}_+^*\tilde{C}_+ = (1+i\tilde{A}_+^*)^{-1}(4\jvIm \tilde{A}_+)(1-i\tilde{A}_+)^{-1}
	\\ = 4(\jvIm A_+)^{1/2}(1-iA_+)^{-1}(1+iA_+^*)^{-1}(\jvIm A_+)^{1/2},
\end{multline*}
and therefore, again using (\ref{e1MinusAbsCplus}),
\begin{equation}\label{edet1MinusCPlus}
	\det (1-\tilde{C}_+^*\tilde{C}_+) = \det(1-C_+^*C_+).
\end{equation}

We therefore use (\ref{etildeC0Computation}) and (\ref{eNorm1PhiDag}) to obtain
\[
	||\tilde{\mathcal{T}} \phi_0^\dagger||_{H_{\Phi_1^\dagger}} = |\tilde{C}_0| (2 \pi h)^{n/2} \det(1-C_+^*C_+)^{-1/4}.
\]
From (\ref{ephi0daggerConstantNorm}) and (\ref{ephi0daggerNorm}) we then compute that
\begin{equation}\label{etildeC0Final}
	|\tilde{C}_0| = (2\pi h)^{-n}|\det G|\det(1-C_+^*C_+)^{1/4}.
\end{equation}

So far, we therefore have from (\ref{ephidaggerTrivialDef}) and (\ref{edStarPhi1Dagger}) a unitary equivalence between $\phi_\alpha^\dagger \in H_{\Phi_2}$ and
\[
	\tilde{C}_0 (\alpha!)^{-1}(2h)^{-|\alpha|} \left(2\overline{G^t}(1+iA_+^*)^{-1}(\jvIm A_+)^{1/2}x\right)^\alpha \in H_{\Phi_1^\dagger}.
\]
We then make a final change of variables $u(x)\mapsto |\det \tilde{G}| u(\tilde{G} x)$ with
\begin{equation}\label{eGtildeDef}
	\tilde{G} = \frac{1}{2}(\jvIm A_+)^{-1/2}(1+iA_+^*)(G^*)^{-1}.
\end{equation}

Writing as usual $\Phi_2^\dagger(x) = \Phi_1^\dagger (\tilde{G} x)$, we have from the unitary equivalence
\[
	||\phi_\alpha^\dagger||_{H_{\Phi_2}} = |\tilde{C}_0| (\alpha!)^{-1}(2h)^{-|\alpha|}|\det \tilde{G}|\,||x^\alpha||_{H_{\Phi_2^\dagger}}.
\]
Using (\ref{e1MinusAbsCplus}) and (\ref{eGtildeDef}), it is easy to see that
\begin{multline*}
	|\det \tilde{G}|^2 = \det(1+iA_+^*)\overline{\det(1+iA_+^*)}\det(4\jvIm A_+)^{-1}|\det G|^{-2} 
	\\ = \det(1-C_+^*C_+)^{-1}|\det G|^{-2}.
\end{multline*}
Combining this with (\ref{etildeC0Final}) gives
\[
	|\tilde{C}_0|\:|\det \tilde{G}| = (2\pi h)^{-n}\det(1-C_+^*C_+)^{-1/4}.
\]

In order to simplify 
\begin{multline*}
	\Phi_2^\dagger(x) = \frac{1}{4}(|\tilde{G} x|^2 - \jvRe(\tilde{G} x, \tilde{C}_+ \tilde{G} x))
	\\ = \frac{1}{4}\left(\langle x, (\tilde{G})^* \tilde{G} x\rangle - \jvRe(x, (\tilde{G})^t \tilde{C}_+ \tilde{G} x)\right),
\end{multline*}
we may compute 
\[
	4(\Phi_2^\dagger)''_{\bar{x}x} = (\tilde{G})^*\tilde{G}, \quad 4(\Phi_2^\dagger)''_{xx} = -(\tilde{G})^t \tilde{C}_+ \tilde{G}.
\]
First, using (\ref{e1MinusAbsCplus}),
\begin{eqnarray*}
	(\tilde{G})^*(\tilde{G}) &=& \frac{1}{4}G^{-1}(1-iA_+)(\jvIm A_+)^{-1}(1+iA_+^*)\overline{G^{-t}}
	\\ &=& G^{-1}(1-C_+^*C_+)^{-1}\overline{G^{-t}}.
\end{eqnarray*}
Then we have
\begin{eqnarray*}
	(\tilde{G})^t\tilde{C}_+(\tilde{G}) &=& \frac{1}{4}\overline{G^{-1}}(1+iA_+^*)(\jvIm A_+)^{-1/2}(\jvIm A_+)^{1/2}(1+iA_+^*)^{-1}
	\\ &&\times (-1-iA_+)(\jvIm A_+)^{-1/2}(\jvIm A_+)^{-1/2} (1+iA_+^*)\overline{G^{-t}}
	\\ &=& -\frac{1}{4}\overline{G^{-1}}(1+iA_+)(\jvIm A_+)^{-1}(1+iA_+^*)\overline{G^{-t}}
	\\ &=& -\overline{G^{-1}}C_+(1-C_+^*C_+)^{-1}\overline{G^{-t}}.
\end{eqnarray*}

Following Remark \ref{rGCFromConvexity}, we seek to write $\Phi_2^\dagger$ using an invertible matrix $G^\dagger$ and a symmetric matrix $C_+^\dagger$ for which
\[
	(\Phi_2^\dagger)_{\bar{x}x}'' = \frac{1}{4}(G^\dagger)^*G^\dagger
\]
and
\[
	(\Phi_2^\dagger)_{xx}'' = -\frac{1}{4}(G^\dagger)^t C_+^\dagger G^\dagger.
\]
It is natural then to define
\[
	E = (1 - C_+^*C_+)^{-1/2}
\]
via the usual selfadjoint positive definite functional calculus.  We then write
\begin{equation}\label{eGdaggerDef}
	G^\dagger = E(G^*)^{-1}
\end{equation}
and
\begin{equation}\label{eCdaggerDef}
	C_+^\dagger = -E^{-t} C_+ E.
\end{equation}
The formula for $\Phi_2^\dagger$ then follows from the formula (\ref{ePhi2Matrix}): in the same way, since $\Phi$ is quadratic and real-valued, it is sufficient to identify the derivatives
\begin{equation}\label{ePhi2daggerMatrix}
	(\Phi_2^\dagger)''_{\bar{x}x} = \frac{1}{4} (G^\dagger)^*G^\dagger, \quad (\Phi_2^\dagger)''_{xx} = -\frac{1}{4}(G^\dagger)^tC_+^\dagger G^\dagger.
\end{equation}

With this $\Phi_2^\dagger$, we have that
\begin{multline*}
	||\Pi_\alpha||_{\mathcal{L}(H_{\Phi_2})} = ||x^\alpha||_{H_{\Phi_2}} \,||\phi^\dagger_\alpha||_{H_{\Phi_2}}
	\\ = (2\pi h)^{-n}\det(1-C_+^*C_+)^{-1/4}(\alpha!)^{-1}(2h)^{-|\alpha|}||x^\alpha||_{H_{\Phi_2}}\, ||x^\alpha||_{H_{\Phi_2^\dagger}}.
\end{multline*}
As the $h$-dependence can be eliminated by a simple change of variables, this proves Theorem \ref{tBoundProj}.

\section{Computation for norms of spectral projections}\label{sComputations}

We here perform explicit computations using Theorem \ref{tBoundProj} in order to obtain information about the norms of spectral projections.  Throughout, we assume $h = 1$, in view of (\ref{eRescalingProjections}).

We begin in Section \ref{ssPolar} by using polar coordinates to reduce (\ref{eBoundProj}) to an integral on the unit sphere $\{|\omega| = 1\}$, which immediately gives Corollary \ref{cUpperBound}.  In Section \ref{ssDim1Asymptotic}, we consider the case of (spatial) dimension $n=1$ and use Laplace's method to deduce the complete asymptotic expansion in Corollary \ref{cDim1Asymptotic}.  We then expand the discussion to arbitrary $n$ in Section \ref{ssGrowthAnyDim}, proving the general rate of exponential growth in Corollary \ref{cExponentialGrowth}.  Finally, we present the numerical computation of these rates of growth for certain examples in Section \ref{ssNumericsExponentialGrowth}.

\subsection{Reduction to the unit sphere}\label{ssPolar}

In view of Theorem \ref{tBoundProj}, it is appropriate to study
\begin{equation}\label{eJPhiDef}
	J(\Phi, \alpha) := (2\pi)^{-n}2^{-|\alpha|}(\alpha!)^{-1}||x^\alpha||_{H_\Phi}^2.
\end{equation}
since
\begin{equation}\label{ePialphaJPhi}
	||\Pi_\alpha||_{\mathcal{L}(H_{\Phi})}^2 = \det(1-C_+^*C_+)^{-1/2}J(\Phi,\alpha) J(\Phi^\dagger, \alpha).
\end{equation}

Write
\begin{eqnarray*}
	||x^\alpha||^2_{H_\Phi} &=& \int_{\Bbb{C}^n} |x^\alpha|^2 e^{-2\Phi(x)}\,dL(x)
	\\ &=& \int_{|\omega|=1}\int_0^\infty r^{2|\alpha|+2n-1} |\omega^\alpha|^2 e^{-2 r^2 \Phi(\omega)}\,dr\,dL(\omega)
	\\ &=& 2^{|\alpha|+n-1}(|\alpha|+n-1)! \int_{|\omega|=1}|\omega^\alpha|^2(4\Phi(\omega))^{-|\alpha|-n}\,dL(\omega),
\end{eqnarray*}
where $dL(\omega)$ is induced by Lebesgue measure restricted to $\{|\omega| = 1\}$.  

We may see that
\begin{equation}\label{eNormalizationFactor}
	(2\pi)^{-n}\frac{(|\alpha|+n-1)!}{\alpha!} 2^{n-1} \int_{|\omega|=1}|\omega^\alpha|^2\,dL(\omega) = 1
\end{equation}
through explicit computation of $||x^\alpha||^2_{H_\Phi}$ where $\Phi(x) = \frac{1}{4}|x|^2$.  Alternately, we may note that $\Phi(x) = \Phi^\dagger(x) = \frac{1}{4}|x|^2$ are the weight function and dual weight function obtained when considering the operator
\begin{equation}\label{eHarmonicRationallyIndependent}
	\sum_{j=1}^n r_j(x_j^2 + (hD_{x_j})^2)
\end{equation}
for $r_j > 0$ chosen rationally independent. This may be directly deduced from computing that $\Lambda^{\pm} = \{(x,\pm i x)\}_{x\in \Bbb{C}^n}$.  The rational independence of the $r_j$ means that all its eigenvalues, obtained from (\ref{emuDef}), are distinct, and since the symbol is real-valued, the operator is selfadjoint and so the spectral projections all have norm one.  We then may combine our computation of $||x^\alpha||^2_{H_\Phi}$ in polar coordinates with (\ref{eJPhiDef}) and (\ref{ePialphaJPhi}) to arrive at the conclusion that, in this case,
\[
	1 = J(\Phi, \alpha)^2 = \left(\frac{2^{|\alpha|+n-1}(|\alpha|+n-1)!}{(2\pi)^n 2^{|\alpha|}\alpha !}\int_{|\omega|=1}|\omega^\alpha|^2 \,dL(\omega)\right)^2,
\]
proving (\ref{eNormalizationFactor}).

Regarding 
\begin{equation}\label{eCalphan}
	C(\alpha;n) = \int_{|\omega|=1}|\omega^\alpha|^2\,dL(\omega) = 2 \pi^n \frac{\alpha!}{(|\alpha|+n-1)!}
\end{equation}
as a normalizing factor, we obtain
\begin{equation}\label{eJPhiPolar}
	J(\Phi,\alpha) = \int_{|\omega| = 1} (4\Phi(\omega))^{-|\alpha|-n}\frac{|\omega^\alpha|^2\,dL(\omega)}{C(\alpha;n)}.
\end{equation}

For $\mu_\alpha$ defined via (\ref{emuDef}) with $h = 1$, recalling that $\jvIm \lambda_j > 0$, we may write
\[
	m = \min \jvIm \lambda_j, \quad M = \max \jvIm \lambda_j.
\]
Then, for any $\alpha \in \Bbb{N}_0^n$,
\[
	(2|\alpha|+n)m \leq \jvRe \mu_\alpha \leq (2|\alpha|+n)M.
\]
We then compute that $\mu_\beta = \mu_\alpha$ implies that
\[
	\frac{m}{M}|\alpha| - \frac{n}{2}\left(1-\frac{m}{M}\right) \leq |\beta| \leq \frac{M}{m}|\alpha| + \frac{n}{2}\left(\frac{M}{m}-1\right).
\]
It is then elementary that
\[
	\#\{\beta\::\:\mu_\beta = \mu_\alpha\} = \BigO(1+|\alpha|^{n-1}).
\]
(See, for instance, the proof of (68) in \cite{Vi2012b}.)

Corollary \ref{cUpperBound} then follows from the triangle inequality and the elementary bound
\begin{multline*}
	J(\Phi, \alpha) = \int_{|\omega| = 1} (4\Phi(\omega))^{-|\alpha|-n}\frac{|\omega^\alpha|^2\,dL(\omega)}{C(\alpha;n)} 
	\\ \leq \left(\inf_{|\omega|=1} 4\Phi(\omega)\right)^{-|\alpha|-n}\int_{|\omega| = 1} \frac{|\omega^\alpha|^2\,dL(\omega)}{C(\alpha;n)}  = \BigO(1)\left(\inf_{|\omega|=1} 4\Phi(\omega)\right)^{-|\alpha|}
\end{multline*}
applied to (\ref{ePialphaJPhi}).

\subsection{Asymptotic expansion in one dimension}\label{ssDim1Asymptotic}

In the dimension 1 case, we may write $N$ instead of $\alpha$.  We note from (\ref{eCalphan}) that $C(N,1) = 2\pi$.  Furthermore, $\{|\omega| = 1\}$, as a subset of $\Bbb{C}$ with measure induced by Lebesgue measure $d\jvRe x \, d\jvIm x$, is the same as $\{e^{it}\::\:0 \leq t < 2\pi\}$ with measure $dt$.  Then
\[
	J(\Phi,N) = \frac{1}{2\pi}\int_0^{2\pi}(4\Phi(e^{it}))^{-N-1}\,dt.
\]
Using (\ref{ePhiDecompose}) in dimension 1, we see that
\[
	\Phi(e^{it}) = \Phi''_{x\bar{x}}\left(1+\jvRe((\Phi''_{x\bar{x}})^{-1}\Phi''_{xx}e^{2it})\right).
\]
The dimension 1 case is particularly simple to analyze because complex numbers commute.  Following Remark \ref{rGCFromConvexity}, we may take
\[
	G = 2(\Phi_{\bar{x}x}'')^{1/2}, \quad C_+ = -\frac{\Phi''_{xx}}{\Phi''_{\bar{x}x}}.
\]
We then have that
\[
	\Phi(e^{it}) = \frac{G^2}{4}\left(1 - \jvRe(C_+ e^{2it})\right) = \frac{G^2}{4}\left(1-|C_+|\cos(2t+\arg C_+)\right).
\]
We furthermore have $G > 0$, since $\Phi''_{\bar{x}x}$ is positive definite Hermitian in any dimension. We also note that $|C_+| < 1$ by strict convexity of $\Phi$, either as shown in the proof of Proposition \ref{pReversibility} or, more simply, by observing that strict convexity requires that $\Phi(e^{-i\arg C_+/2}) > 0$.  Making a change of variables $\tau = t+\arg(C_+)/2$ reduces the study of $J(\Phi,N)$ to the study of
\begin{equation}\label{eJPhivsA}
	J(\Phi,N) = \frac{1}{2\pi}(G^2)^{-N-1}\int_0^{2\pi} (1-|C_+|\cos 2t)^{-N-1}\,dt.
\end{equation}

We turn to $\Phi^\dagger$ in the dimension 1 case, where
\[
	G^\dagger = (1-|C_+|^2)^{-1/2}G^{-1}, \quad C_+^\dagger = -C_+.
\]
We use the same reasoning which provided (\ref{eJPhivsA}) to obtain
\begin{equation}\label{eJPhiBoth}
	J(\Phi,N)J(\Phi^\dagger,N) = (1-|C_+|^2)^{N+1}\left(\frac{1}{2\pi}\int_0^{2\pi} (1-|C_+|\cos 2t)^{-N-1}\,dt\right)^2.
\end{equation}
We turn to finding an asymptotic expansion for the integral.

Such integrals are well-studied by means of Laplace's method (see for instance Chapter 3 of \cite{MiBook}).  We rewrite the integral in (\ref{eJPhiBoth}) as
\begin{equation}\label{eJPhivsA2}
	\int_0^{2\pi} (1-|C_+|\cos 2t)^{-N-1}\,dt = 2\int_{-\pi/2}^{\pi/2} \jvexp\left((N+1)R(t)\right)\,dt,
\end{equation}
for
\[
	R(t) = -\log(1-|C_+|\cos 2t).
\]

Since the case $C_+ = 0$ is trivial (and corresponds to a normal operator), we assume that $C_+ \neq 0$.  Therefore the function $R(t)$ for $t\in [-\pi/2,\pi/2]$ has a unique maximum at $t = 0$ since $|C_+|$ is positive.  We therefore know that there exists a complete asymptotic expansion
\begin{equation}\label{eJPhivsA3}
	e^{-(N+1)R(0)}\int_{-\pi/2}^{\pi/2} \jvexp\left((N+1)R(t)\right)\,dt \sim \sum_{j=0}^\infty a_j N^{-j-1/2}
\end{equation}
for some sequence of real numbers $\{a_j\}_{j=0}^\infty$ and
\begin{equation}\label{eJPhivsA4}
	a_0 = \sqrt{\frac{-2\pi}{R''(0)}} = \sqrt{\frac{\pi(1-|C_+|)}{2|C_+|}}.
\end{equation}
Of course, $e^{-(N+1)R(0)} = (1-|C_+|)^{N+1}$.

We deduce from (\ref{ePialphaJPhi}), (\ref{eJPhiBoth}), and (\ref{eJPhivsA2}) that
\begin{multline*}
	||\Pi_N||^2 = \frac{1}{\pi^2}(1-|C_+|^2)^{-1/2}\left(\frac{1+|C_+|}{1-|C_+|}\right)^{N+1}
	\\ \times \left((1-|C_+|)^{N+1} \int_{-\pi/2}^{\pi/2}\jvexp\left((N+1)R(t)\right)\,dt\right)^2.
\end{multline*}
From (\ref{eJPhivsA3}), we obtain the asymptotic expansion
\[
	\left(\frac{1-|C_+|}{1+|C_+|}\right)^{N/2}||\Pi_N|| \sim \sum_{j=0}^\infty c_j N^{-j-1/2}
\]
for
\[
	c_j = \frac{1}{\pi}(1-|C_+|^2)^{-1/4}\left(\frac{1+|C_+|}{1-|C_+|}\right)^{1/2}a_j.
\]
The computation of $c_0$ in Corollary \ref{cDim1Asymptotic} immediately follows, completing the proof of the corollary.

\subsection{Rates of exponential growth in any dimension}\label{ssGrowthAnyDim}

We now consider the case where $n$ may be arbitrary.  We will prove the exponential rate of growth in Corollary \ref{cExponentialGrowth} as a straightforward consequence of (\ref{eJPhiPolar}), Laplace's method, and Stirling's approximation.  The analysis here is not particularly deep, and we presently do not attempt to analyze the suprema involved or to make estimates uniform.

We consider $\beta \in (\overline{\Bbb{R_+}})^n$ with $|\beta| = \sum_{j=1}^n \beta_j = 1$ fixed throughout this section and analyze $J(\Phi, \lambda\beta)$ defined in (\ref{eJPhiPolar}) as $\lambda \rightarrow \infty$.

We here use only the most elementary version of Lapace's method for multidimensional integrals.  Specifically, we assume that $f\in C^\infty(M;[0,\infty))$ for $M$ a compact boundaryless manifold of real dimension $m$ equipped with a natural volume $\mu$, a positive measure with smooth density with respect to any coordinate chart.  Then there exists some $C > 0$ for which
\begin{equation}\label{eMultiDimLaplace}
	\frac{1}{C}\lambda^{-m/2}\left(\sup_{M}f\right)^\lambda \leq \int_M f(x)^\lambda\,d\mu(x) \leq \mu(M)\left(\sup_{M}f\right)^\lambda,
\end{equation}
valid for $\lambda \in \Bbb{R}_+$ sufficiently large.  The right-hand bound is trivial, and the left-hand bound is easily obtained from estimating $f$ from below in local coordinates
\[
	\varphi:\Bbb{R}^m \supseteq U \rightarrow V \subseteq M, \quad \varphi(0) = x_0
\]
centered at some $x_0$ where $f$ attains its maximum:
\[
	(f\circ \varphi)(y) \geq (\sup_{M}f)-Cy^2.
\]
The error induced by multiplying by a cutoff function localizing to a neighborhood of $y = 0$ is exponentially small in $\lambda$, and so we have the estimate (\ref{eMultiDimLaplace}) for $\lambda$ sufficiently large.

We note that strict convexity of $\Phi$ certainly means that $(\Phi(\omega))^{-n}\sim 1$ when $|\omega|=1$.  We also note that, when $\lambda \in \Bbb{R}_+$,
\[
	|\omega^{\lambda \beta}|^2 = \left|\prod_{j=1}^n \omega^{\lambda\beta_j}\right|^2 = |\omega^\beta|^{2\lambda}.
\]
Applying the upper bound in (\ref{eMultiDimLaplace}) with
\[
	f(\omega) = (4\Phi(\omega))^{-1}|\omega^\beta|^2
\]
for fixed $\beta$ with $|\beta| = 1$ then gives
\begin{multline*}
	\log\int_{|\omega|=1} (4\Phi(\omega))^{-\lambda|\beta|-n} |\omega^{\lambda \beta}|^2 \, dL(\omega)
	\\ \leq \log \left(\sup_{|\omega|=1} (4\Phi(\omega))^{-1} |\omega^\beta|^2\right) + \log C(0,n) + \log \sup_{|\omega| = 1} (4\Phi(\omega))^{-n},
\end{multline*}
recalling from (\ref{eCalphan}) that $C(0,n)$ is the induced Lebesgue measure of $\{|\omega| = 1\}$.  A similar application of the lower bound in (\ref{eMultiDimLaplace}) gives
\begin{multline*}
	\log\int_{|\omega|=1} (4\Phi(\omega))^{-\lambda|\beta|-n} |\omega^{\lambda \beta}|^2 \, dL(\omega)
	\\ \geq \log \left(\sup_{|\omega|=1} (4\Phi(\omega))^{-1} |\omega^\beta|^2\right) - \log C - \frac{2n-1}{2}\log \lambda + \log \inf_{|\omega| = 1} (4\Phi(\omega))^{-n}.
\end{multline*}
Naturally, when we consider the limit $\lambda \rightarrow \infty$, we have $\log \lambda \gg 1$.

We therefore have that, for $\beta \in (\overline{\Bbb{R}_+})^n$ fixed with $|\beta|=1$ and for $\lambda$ sufficiently large,
\begin{multline}\label{elogint}
	\log \int_{|\omega|=1} (4\Phi(\omega))^{-\lambda|\beta|-n} |\omega^{\lambda \beta}|^2 \, dL(\omega) 
	\\ = \lambda \log \left(\sup_{|\omega|=1} (4\Phi(\omega))^{-1} |\omega^\beta|^2\right) + \BigO(\log \lambda).
\end{multline}

We then use Stirling's approximation in the form
\[
	\Gamma(x+1) = \left(\frac{x}{e}\right)^x \sqrt{2\pi x}(1 + \BigO(x^{-1})),\quad x\rightarrow +\infty.
\]
In the end, the projections discussed only make sense when $\lambda \beta \in \Bbb{N}_0^n$, but we may regardless follow (\ref{eCalphan}) and write
\[
	C(\lambda\beta;n) = \frac{2\pi^n}{\Gamma(\lambda + n)}\prod_{j=1}^n \Gamma(\lambda\beta_j+1).
\]
We may ignore terms where $\beta_j = 0$ as these yield a factor of 1; the same result follows if we use the usual convention that $0 \log 0 = 0$.  We also reiterate that we are considering $\beta$ fixed, since this application of Stirling's approximation cannot be said to hold uniformly in $\beta$, particularly when some $\beta_j \rightarrow 0$.

We analyze the three pieces of Stirling's approximation separately: the error factor $1+\BigO(x^{-1})$, the exponential $(x/e)^x$, and the square root $\sqrt{2\pi x}$.  By the Neumann series, for $\lambda$ sufficiently large, we have
\[
	\frac{1}{1+\BigO(\lambda^{-1})}\prod_{j\::\:\beta_j\neq 0} (1+\BigO(\lambda^{-1})) = 1+\BigO(\lambda^{-1}).
\]

For the exponential in the denominator, we note that
\begin{multline*}
	\log \left[\left(\frac{\lambda+n-1}{e}\right)^{\lambda+n-1}\right] = (\lambda+n-1)\left(\log (\lambda+n-1) - 1\right) 
	\\= (\lambda+n-1)\left(\log \lambda - 1 + \BigO(\lambda^{-1})\right)= \lambda \left(\log \lambda - 1\right) + \BigO(\log\lambda),
\end{multline*}
in particular using the Taylor expansion of $\log (x+h) \approx \log x + h/x$ to approximate $\log(\lambda+n-1)$.  We also have that, using $\sum_{j\::\beta_j\neq 0}\beta_j = |\beta| = 1$,
\begin{multline*}
	\log \left(\prod_{j\::\: \beta_j \neq 0}\left(\frac{\lambda \beta_j}{e}\right)^{\lambda\beta_j}\right) = \sum_{j\::\:\beta_j\neq 0} \lambda \beta_j(\log\lambda + \log \beta_j - 1) \\ = \lambda \log\lambda -\lambda + \lambda\sum_{j\::\:\beta_j\neq 0}\beta_j\log\beta_j.
\end{multline*}
Therefore we have the contribution
\[
	\log\left(\frac{1}{((\lambda+n-1)/e)^{\lambda+n-1}}\prod_{j\::\:\beta_j \neq 0} \left(\frac{\lambda\beta_j}{e}\right)^{\lambda\beta_j}\right) = \lambda\sum_{j\::\:\beta_j\neq 0} \beta_j \log \beta_j + \BigO(\log \lambda).
\]

The contribution from the square root is negligible:
\[
	\log\left(\frac{2\pi^n}{\sqrt{2\pi(\lambda+n-1)}}\prod_{j\::\:\beta_j\neq 0}\sqrt{2\pi\lambda\beta_j}\right) = \BigO(\log\lambda),\quad \lambda\rightarrow +\infty.
\]
We therefore conclude that, for $\lambda$ sufficiently large,
\begin{equation}\label{elogC}
	\log C(\lambda\beta;n) = \lambda\sum_{j\::\:\beta_j\neq 0} \beta_j \log \beta_j + \BigO(\log\lambda).
\end{equation}

From (\ref{eJPhiPolar}), (\ref{elogint}), and (\ref{elogC}), we have that
\[
	\log J(\Phi,\lambda\beta) = \lambda\left(\log\left(\sup_{|\omega|=1} (4\Phi(\omega))^{-1}|\omega^\beta|^2\right)- \sum_{j\::\:\beta_j \neq 0} \beta_j\log\beta_j\right) + \BigO(\log\lambda).
\]
Finally, we have from (\ref{ePialphaJPhi}) that
\begin{multline*}
	\lambda^{-1}\log ||\Pi_{\lambda\beta}||_{\mathcal{L}(H_\Phi)} = \frac{1}{2}\log \left(\sup_{|\omega|=1} (4\Phi(\omega))^{-1}|\omega^\beta|^2\right)
	\\ + \frac{1}{2} \log \left(\sup_{|\omega|=1} (4\Phi^\dagger(\omega))^{-1}|\omega^\beta|^2\right) - \sum_{j\::\:\beta_j \neq 0} \beta_j\log\beta_j + \BigO(\lambda^{-1}\log \lambda).
\end{multline*}
This proves Corollary \ref{cExponentialGrowth}.

\begin{remark}
We can see that the role of $\sum \beta_j\log\beta_j$ is nontrivial by examining the case (\ref{eHarmonicRationallyIndependent}), a self-adjoint operator with simple eigenvalues and weights $\Phi(x) = \Phi^\dagger(x) = \frac{1}{4}|x|^2$.  The norms of the spectral projections are uniformly one, and therefore the rate of exponential growth $\lim_{\lambda \rightarrow \infty} \lambda^{-1}\log||\Pi_{\lambda\beta}||$ is zero for every $\beta$. This allows us to indirectly deduce that
\[
	\log \left(\sup_{|\omega|=1} |\omega^\beta|^2\right) = \sum_{j\::\:\beta_j \neq 0} \beta_j\log\beta_j.
\]
\end{remark}

\subsection{Numerical computation of growth rates in dimension 2}\label{ssNumericsExponentialGrowth}

In order to show new information which may be obtained through Corollary \ref{cExponentialGrowth}, we consider Examples \ref{exKFP1} and \ref{exJordan1}.  The author feels that these numerical computations shed some light on the connection between the norms of spectral projections and geometry and may point to interesting directions for future work.

In dimension 2, we naturally parameterize $\beta \in (\overline{\Bbb{R}_+})^2$ with $|\beta| = 1$ via
\[
	\beta = (1-t,t), \quad t \in [0,1].
\]
Following Corollary \ref{cExponentialGrowth}, we will study
\[
	g(\beta, \Phi) = \mathop{\lim_{\lambda \rightarrow \infty}}_{\lambda\beta\in\Bbb{N}_0^n} \lambda^{-1}\log||\Pi_{\lambda \beta}||_{H_{\Phi}}.
\]
We will focus on the case of Examples \ref{exKFP1} and \ref{exJordan1}, as well as Theorem \ref{tOrthogonalProj}, with
\[
	\Phi_G(x) = \frac{1}{4}|Gx|^2, \quad G \in GL_n(\Bbb{C}).
\]

For $G_{\textnormal{KFP}}$ as in (\ref{eGKFP}), we record $g(\beta,\Phi_{G_{\textnormal{KFP}}})$ in Figure \ref{fGrowthKFP}.  It seems apparent that a maximum occurs at $\beta = (1/2,1/2)$.  We furthermore see that the exponential growth rate decreases significantly as $\beta \rightarrow (0,1)$ or $\beta\rightarrow (1,0)$, but certainly does not approach zero; we remark that this corresponds to spectral projections onto eigenvalues whose argument approaches $\pi/4$ or $-\pi/4$.

\begin{figure}
\caption{Growth rates $g(\beta, \Phi_{G_{\textnormal{KFP}}})$ versus $\beta = (1-t,t)$.}
\label{fGrowthKFP}
\centering
\includegraphics[scale=0.30]{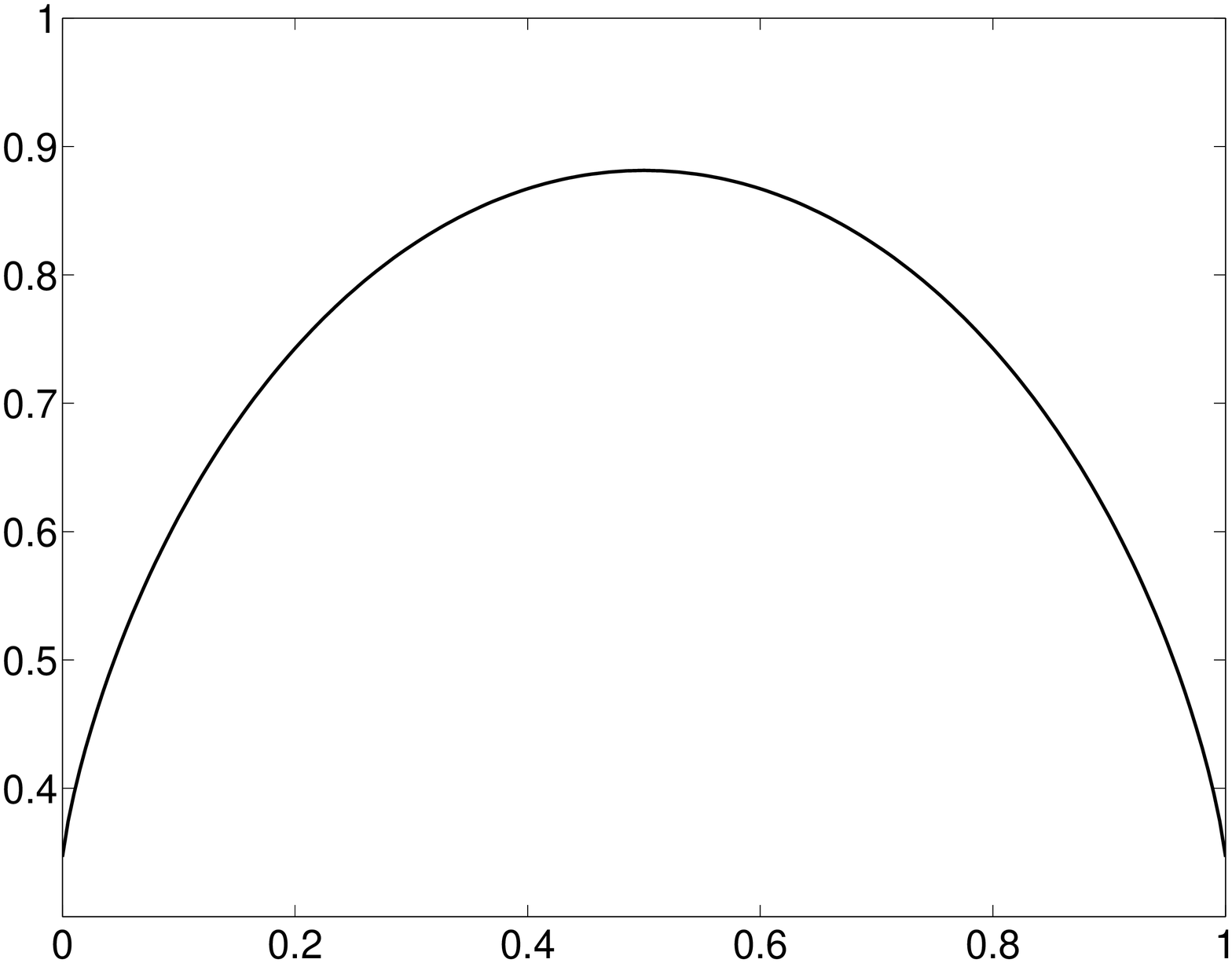}
\end{figure}

\begin{figure}
\caption{Growth rates $g((1/2,1/2), \Phi_{G_{\eps}})$ versus $\log \eps$.}
\label{fGrowthJordan}
\centering
\includegraphics[scale=0.30]{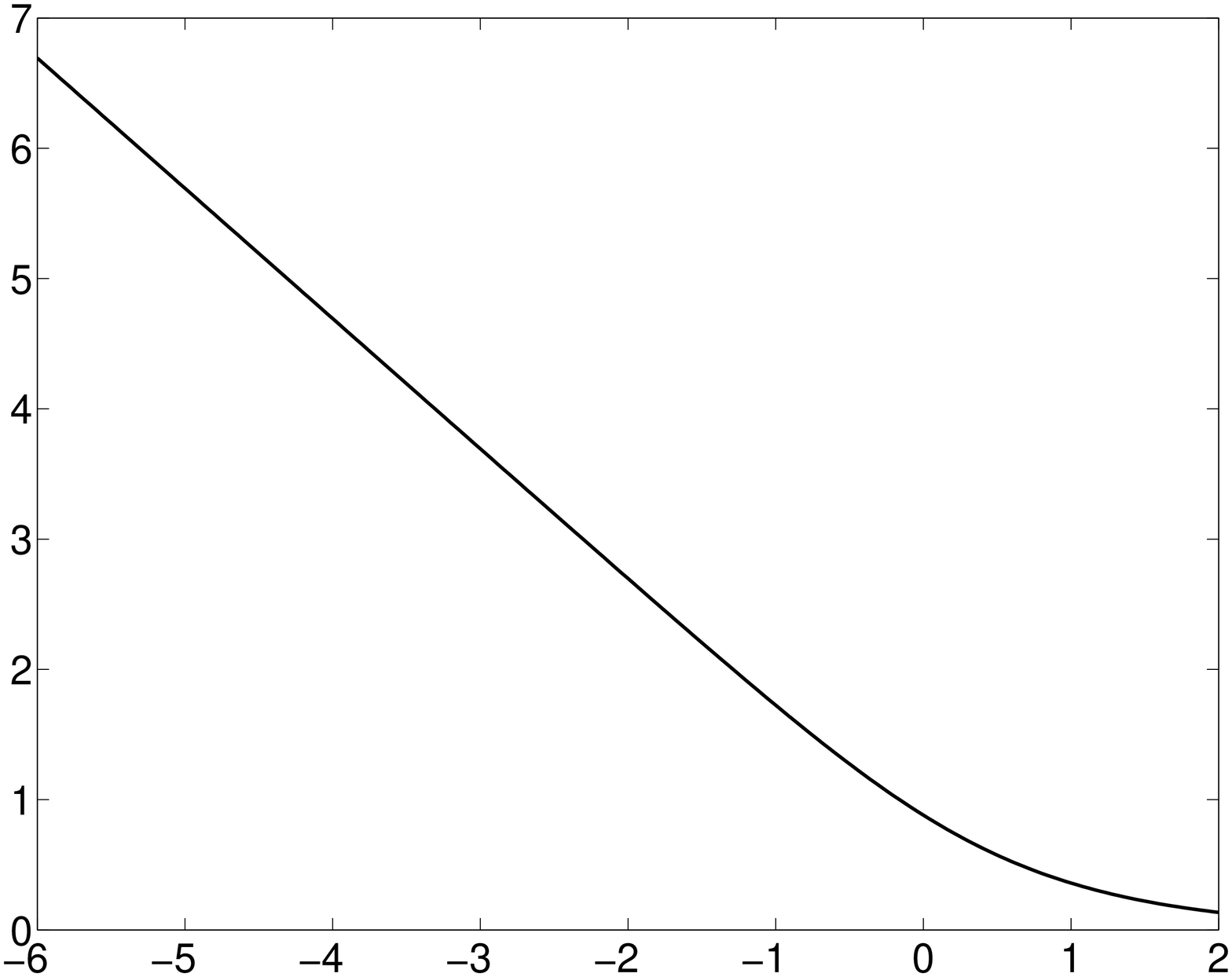}
\end{figure}

We may also observe an apparent connection between the geometry of $\Phi$ and $\Phi^\dagger$ and the exponential growth rates.  From Corollary \ref{cDim1Asymptotic}, we see that in dimension 1 the upper bounds given by Proposition \ref{cElementaryExponential} and Corollary \ref{cUpperBound} are identical and sharp.  In the case where $\Phi(x) = \Phi_G(x)$, it is easy to see that the bounds given by Proposition \ref{cElementaryExponential} and Corollary \ref{cUpperBound} are again identical and are equal to the logarithm of the condition number of $G$:
\begin{equation}\label{egvsCond}
	g(\beta, \Phi_G) \leq \log \left(||G||\:||G^{-1}||\right).
\end{equation}
This simply follows from writing the minimum of $\Phi$ on the unit sphere as the square of the least singular value of $G$ and observing that the maximum of $\Phi$ and the reciprocal of the minimum of $\Phi^\dagger$ are both given by the square of the greatest singular value of $G$.

As discussed in Remark \ref{rNotSharp}, this estimate is not generally sharp.  However, it appears to be sharp or extremely nearly sharp for the maximum growth rate for $G_{\textnormal{KFP}}$, to an error of less than $10^{-12}$:
\[
	g((1/2,1/2),\Phi_{G_{\textnormal{KFP}}}) \approx 0.8814 \approx \log \left(||G_{\textnormal{KFP}}||\: ||G_{\textnormal{KFP}}^{-1}||\right).
\]

We may also consider $G_\eps$ from (\ref{eGJordan}).  One may compute that the profile of $G_\eps$ appears similar to that of $G_{\textnormal{KFP}}$, with a strict maximum at $\beta = (1/2,1/2)$; we therefore examine $g((1/2,1/2),G_\eps)$ in Figure \ref{fGrowthJordan} as a function of $\log \eps$.

We may note that (\ref{egvsCond}) gives an accurate picture of the growth of the spectral projections as $\eps \rightarrow 0^+$.  In fact, the relative error
\[
	\frac{|g((1/2,1/2),G_\eps) - \log (||G_\eps||\:||G_\eps^{-1}||)|}{g((1/2,1/2),G_\eps)}
\]
is extremely small for small $\eps$, e.g.\ error $\leq 10^{-13}$ when $\log \eps = -6$, but increases steadily to nearly $0.1$ when $\eps = 1$.  The author therefore considers the connection between condition number and spectral projection growth to be a nontrivial and interesting question.

As a final note, we may observe that the phenomenon of $g(\beta, G)$ having a maximum at $\beta = (1/2,1/2)$ is not true for all $\Phi$, particularly when $\Phi_{\textnormal{plh}} \neq 0$, defined in (\ref{ePhiplhDef}).  In fact, let us consider the simplest such example:
\[
	\Phi(x_1,x_2) = \Phi_1(x_1)+\Phi_2(x_2) = \frac{1}{4}\left(|x_1|^2 - \jvRe(ax_1^2)\right) + \frac{1}{4}|x_2|^2, \quad |a|< 1.
\]
Using Theorem \ref{tBoundProj} and decomposing for instance
\[
	||x^{\lambda\beta}||_{L^2_\Phi}^2 = \left(\int_{\Bbb{C}}|x_1|^{2\lambda\beta_1}e^{-2\Phi_1(x_1)/h}\,dL(x_1)\right)\left(\int_{\Bbb{C}}|x_2|^{2\lambda\beta_2}e^{-2\Phi_2(x_2)/h}\,dL(x_2)\right),
\]
it becomes clear that
\[
	g(\beta,\Phi) = \beta_1g(1,\Phi_1) + \beta_2g(1,\Phi_2).
\]
From Corollary \ref{cDim1Asymptotic}, we see that $g(\beta,\Phi)$ is linear in $\beta_1$:
\[
	g(\beta,\Phi) = \frac{1+|a|}{1-|a|}\beta_1.
\]

\begin{acknowledgements} The author would like to thank Michael Hitrik for helpful discussions at the outset of this work.  He would also like to thank Fr{\'e}d{\'e}ric H{\'e}rau for helpful suggestions and Fredrik Andersson for a valuable discussion contributing to the proof of Proposition \ref{pReversibility}.  Finally, the author would like to thank the referee for a careful reading and many helpful corrections and suggestions.
\end{acknowledgements}

\bibliographystyle{plain}

\bibliography{MicrolocalBibliography2}
\end{document}